\documentclass[12pt,oneside,english]{amsart}

\usepackage{tcolorbox}
\usepackage{graphicx}
\usepackage{caption}

\usepackage{amsmath}
\usepackage{amssymb}
\usepackage{hyperref}
\usepackage{enumerate}
\usepackage{graphicx}
\usepackage{color}
\usepackage{xcolor}
\usepackage{mathtools}
\usepackage{float}
\usepackage{tikz}
\usetikzlibrary{arrows}
\usepackage{bm}
\usepackage[title]{appendix}
\usepackage{mathrsfs}
\DeclareMathAlphabet\euscript{U}{eus}{m}{n}
\usepackage{hyperref}
\hypersetup{
	colorlinks = true, 
	urlcolor = blue, 
	linkcolor = red, 
	citecolor = blue 
}

\usepackage{mathdots}
\usepackage{amsthm}
\usepackage{tikz,amsmath}
\usetikzlibrary{arrows}
\usetikzlibrary{calc}
\usepackage{tikz-cd}
\usetikzlibrary{decorations.pathreplacing}
\tikzset{%
	show curve controls/.style={
		postaction={
			decoration={
				show path construction,
				curveto code={
					\draw [blue,-] 
					(\tikzinputsegmentfirst) -- (\tikzinputsegmentsupporta)
					(\tikzinputsegmentlast) -- (\tikzinputsegmentsupportb);
					\fill [red, opacity=0.5] 
					(\tikzinputsegmentsupporta) circle [radius=.2ex]
					(\tikzinputsegmentsupportb) circle [radius=.2ex];
				}
			},
			decorate
		}
	},
	scc/.style={
	},
}
\usepackage{epsfig}
\usepackage[normalem]{ulem}
\usepackage[all,line,
]{xy} 
\CompileMatrices

\newcommand{\id}{\operatorname{id}}

\newcommand{\Per}{\operatorname{Per}}

\newcommand{\sync}{\operatorname{sync}}

\newcommand{\K}{\operatorname{\underline{\mathbb K}}}

\newtheorem{thm}{Theorem}[section]
\newtheorem{corollary}[thm]{Corollary}
\newenvironment{cor}{\begin{corollary}\rm}{\end{corollary}}
\newtheorem{lemma}[thm]{Lemma}

\newtheorem{prop}[thm]{Proposition}

\newtheorem{assumption}[thm]{Assumption}

\newtheorem{definition}[thm]{Definition}

\newtheorem{example}[thm]{Example}

\newtheorem{question}[thm]{Question}

\newtheorem{remark}[thm]{Remark}
\newenvironment{rem}{\begin{remark}\rm}{\end{remark}}
\newtheorem{algorithm}[thm]{Algorithm}

\newtheorem{observation}[thm]{Observation}

\newtheorem{claim}[thm]{Claim}

\newtheorem{fact}[thm]{Fact}

\newenvironment{ex}{\begin{example}\rm}{\end{example}}
\newenvironment{re}{\begin{remark}\rm}{\end{remark}}

\newcommand{\vertiii}[1]{{\left\vert\kern-0.25ex\left\vert\kern-0.25ex\left\vert #1
    \right\vert\kern-0.25ex\right\vert\kern-0.25ex\right\vert}}

\title{The extended future cover and desynchronization of sofic subshifts}

\author{Klaus Thomsen}

\address{Department of Mathematics, Aarhus University, Ny Munkegade, 8000 Aarhus C, Denmark}

\email{matkt@math.au.dk}

\subjclass[2020]{Primary ; Secondary }
\keywords{Sofic subshifts, canonical covers}
\begin{document}

\begin{abstract} The paper constructs a cover of the future cover of a sofic subshift $Y$ which is canonical in the same way as the future cover itself. In some cases the cover is isomorphic to the future cover and in other it is a genuine extension. The extended future cover is then used to show that by removing from a sofic subshift all elements containing a synchronizing word, the result is a new sofic subshift whose future cover  can be realized as the merged graph of a canonical subgraph of the extended future cover of $Y$.
\end{abstract}
\maketitle


\section{Introduction}

The purpose of this paper is to exhibit a natural extension of the future cover of a sofic subshift and apply it to the study of its structure. The future cover was introduced by Wolfgang Krieger in \cite{Kr1} and \cite{Kr2} and has the remarkable property that conjugacies of sofic subshifts lift to unique conjugacies of the associated covers. His methods and results were extended in a recent work by the author, \cite{Th1}, giving rise to other canonical covers with the same nice property, but Krieger's work and his future cover remains the source of all known examples of this sort. It is therefore useful to have a way to find the future cover of a sofic subshift starting from an arbitrary presentation of it. This is the first goal here, and is really not that hard to achieve: An appropriate subgraph of what is termed the subset construction on page 76 of \cite{LM} turns out to factor onto the future cover via what is called merging, or the merged graph in \cite{LM}. A very similar, but not quite identical way of obtaining the future cover from the subset construction was described by Nasu in \cite{N}.  

The more difficult task is to show that when applied to an appropriate presentation of the given sofic subshift, this subgraph of the subset construction is itself a cover which is strongly canonical in the same sense as the future cover is. The proof of this is based on a further development of the methods and results from \cite{Th1} and in the process we prove that the subset construction applied to a weakly canonical cover is itself a weakly canonical cover. Both the new strongly canonical cover from \cite{Th1} and the one we obtain here depend on the canonicity of the future cover which was the main result of \cite{Kr1} and \cite{Kr2}. It is therefore appropriate to consider the paper \cite{Th1} as a first addendum to Krieger's work and the present paper as a second addendum. The main difference between the covers from the two addenda is that the one from \cite{Th1} does not in general factor onto Krieger's future cover while the one we obtain here does so in a canonical way; a fact which may justify that we call it \emph{the extended future cover}.

As explained in the introduction to \cite{Th1} the motivation for the investigation of canonical covers of sofic subshifts is the desire to obtain a better understanding of their structure, and in particular the structure which set them apart from subshifts of finite type. We show in this paper one way the future cover and its extended version can be used for this purpose. Specifically, it is shown that by removing from a sofic subshift $Y$ the elements that contain a synchronizing word the result is a sofic subshift $\eth Y$ of $Y$ which we call the desynchronization of $Y$. When $Y$ is of finite type the desynchronization is empty and when $Y$ is non-wandering it agrees with the derived shift space $\partial Y$ which was introduced in \cite{Th0}. For general sofic subshifts the two notions disagree and as a result the sofic subshifts $\eth^n Y$ and $\partial^n Y$ that arise by iterating the constructions differ wildly, also for irreducible and mixing sofic subshifts. Since all the successive desynchronizations $\eth^nY$ are conjugacy invariants of $Y$, their introduction raises many questions which deserve attention, but in this paper we show only how the future cover of $\eth Y$ can be obtained as the merged graph of a canonical subgraph of the extended future cover of $Y$.


\section{Terminology and notation}

Let $A$ be a finite set. The set $A^\mathbb Z$ of bi-infinite sequences 
$$
x = (x_i)_{i \in \mathbb Z} = \cdots x_{-3}x_{-2}x_{-1}x_0x_1x_2x_3 \cdots
$$
of elements from $A$ is a compact metric space. The shift $\sigma$ is the homeomorphism of $A^\mathbb Z$ defined such that
$$
\sigma(x)_i := x_{i+1}.
$$
A closed subset $X \subseteq A^\mathbb Z$ is shift-invariant when $\sigma(X) = X$ and it is then called a \emph{subshift}. We shall mainly consider \emph{subshifts of finite type} (abbreviated to SFT) and \emph{sofic subshifts}. See Definition 2.1.1 and Definition 3.1.3 in \cite{LM}.

The set $\mathbb W(X)$ of \emph{words} in a subshift $X$ consists of the finite strings $a_1a_2 \cdots a_n \in A^n$ with the property that $a_1a_2 \cdots a_n  = x_1x_2\cdots x_n$ for some element $x = (x_i)_{i \in \mathbb Z} \in X$. Given an element $x \in X$ and an integer $j \in \mathbb Z$ we denote by $x_{(-\infty,j]}$ the element 
$$
\cdots x_{k}x_{k+1} x_{k+2} \cdots x_j \in A^{(-\infty,j]},
$$
and by $X(-\infty,j]$ the set
$$
X(-\infty,j] := \left\{ x_{(-\infty,j]}: \ x \in X \right\}.
$$
Symbols like $x_{[j,\infty)}, \ x_{[i,j]},   X[j,\infty)$ and $X[i,j]$ have a similar meaning. The set of points in a subshift $X$ that are periodic under the shift will be denoted by $\Per(X)$.

 Let $G$ be a finite directed graph with arrows or edges $E_G$ and vertices $V_G$. For $e \in E_G$ let $s_G(e) \in V_G$ be the start vertex and $t_G(e)$ the terminal vertex of $e$. We extend the definition of $t_G$ to finite and left-infinite paths in $G$, and the definition of $s_G$ to finite and right-infinite paths in $G$, in the obvious way. We denote the edge shift of $G$ by $X_G$. Thus $X_G$ consists of the bi-infinite sequences $(e_i)_{i \in \mathbb Z}$
 of edges in $G$ such that $s_G(e_{i+1}) = t_G(e_i)$ for all $i \in \mathbb Z$, and the words in $\mathbb W(X_G)$ are the finite paths in $G$ that do not start at a source or terminate at a sink in $G$. We shall sometimes refer to the elements of $X_G$ as  \emph{rays} in $G$. The set $X_G$ is an SFT and every SFT is conjugate to $X_G$ for some graph $G$ by Theorem 2.3.2 in \cite{LM}.

In this paper the components in $G$ and $X_G$ will be important. Recall that a directed graph $G$ is strongly connected when every pair of vertices $v,w \in V_G$ can be connected by a finite path in $G$; that is, there is a finite path $\gamma$ in $G$ such that $s_G(\gamma) = v$ and $t_G(\gamma) = w$. This happens if and only if the shift acts transitively on $X_G$. In a general finite directed graph $G$, the maximal strongly connected subgraphs $C$ of $G$ are the \emph{components} of $G$ and the corresponding subshifts $X_C \subseteq X_G$ are the components of $X_G$. Then the disjoint union
$$
\bigsqcup_C X_C = \overline{\Per (X_G)}
$$
consists of the non-wandering elements of $X_G$.

Let $A$ be a finite set (the alphabet) and $L_G : E_G \to A$ a map which we consider as a labeling of the edges in $G$. Given a finite path $\gamma:=x_{[i,j]} \in X_G[i,j]$ in $G$ we define $L_G(\gamma) \in Y[i,j]$ such that $L_G(\gamma)_k := L_G(x_k)$ for $k \in [i,j]$. The extension to left- and right-infinite paths is made in a similar way. In particular, when $x=(e_i)_{i \in \mathbb Z} \in X_G$,
\begin{align*}
&L_G(x) := \left(L_G(e_i)\right)_{i \in \mathbb Z} = \cdots L_G(e_{-3}) L_G(e_{-2}) L_G(e_{-1}) L_G(e_0)L_G(e_1)L_G(e_2) L_G(e_3) \cdots \\
&\in A^\mathbb Z .
\end{align*}
Then $(G,L_G)$ is a \emph{labeled graph}, $Y:= L_G(X_G) \subseteq A^\mathbb Z$ is a sofic subshift and $(G,L_G)$ is a \emph{presentation} of $Y$, cf. \S 3.1 in \cite{LM}. We will always assume, as we can without loss of generality, that the graph $G$ of a presentation $(G,L_G)$ does not contain sources or sinks.

 
  When $D \subseteq A^\mathbb N$ is a subset of $A^{\mathbb N}$ and $a \in A$, we let $\frac{D}{a}$ denote the set
$$
\frac{D}{a} := \left\{x_1x_2x_3 \cdots \in A^\mathbb N : \ ax_1x_2\cdots \in D \right\} = \left\{x \in A^\mathbb N: \ ax \in D\right\};
$$
i.e. $\frac{D}{a}$ is the set of elements of $A^{\mathbb N}$ that are obtained from the elements of $D$ that start with $a$ by deleting the first coordinate.

Let $Y \subseteq A^\mathbb Z$ be a sofic subshift. The \emph{future set} $F(y)$ of an element $y \in Y$ is 
$$
F(y) := \left\{w \in Y[0,\infty): \ y_{(-\infty,-1]}w \in Y\right\}.
$$
The \emph{future cover} $(\mathbb K(Y),L_{\mathbb K(Y)})$ of $Y$ is the labeled graph where
$$
V_{\mathbb K(Y)} := \left\{F(y): \ y \in Y\right\}
$$
and there is a labeled arrow $F(y) \overset{a}{\to} F(y')$ in $\mathbb K(Y)$ when 
$$
F(y') = \frac{F(y)}{a} .
$$

The notion of regularity of a labeled graph played a significant role in \cite{Th1}, partly as a tool to characterize the future cover, and we shall need it here again. A labeled graph $(H,L_H)$ is \emph{regular} when every vertex $v \in V_H$ has the property that there is an element $z \in X_H$ such that $t_H(z_{(-\infty,-1]}) = v$ and 
$$
\left\{L_H(x): \ x \in X_H[0,\infty), \ s_H(x) = v \right\} =F(L_H(z)).
$$
We shall need at least two more properties that a labeled graph can have; it can be \emph{right-resolving}, as in Definition 3.3.1 of \cite{LM}, and it can be \emph{follower-separated} as in Definition 3.3.7 of \cite{LM}. For other notions related to subshifts that are not explained we refer also to \cite{LM}.

\section{The subset construction and the future cover}

\subsection{The subset contruction}

Let $(G,L_G)$ be a presentation of the sofic shift $Y$. When $D\subseteq V_{G}, \ D \neq \emptyset$, and $a \in A$, set
$$
[D,a]: = \left\{ t_G(e): \ e \in E_G, \ s_G(e) \in D, \ L_G(e) = a \right\};
$$
the set of vertices that can be reached from an element of $D$ using an edge in $G$ labeled by $a$. We consider the collection $2^{V_G}$ of non-empty subsets of $V_G$ as the set of vertices in a graph $\overline{G}$ with edges 
$$
E_{\overline{G}} := \left\{(D,a) \in 2^{V_{G}} \times A : \ [D,a] \neq \emptyset \right\} .
$$
The start vertex $s_{\overline{G}}(D,a)$ of $(D,a) \in E_{\overline{G}}$ is $D$ and the terminal vertex $t_{\overline{G}}(D,a)$ is $[D,a]$. We define a labeling $L_{\overline{G}}$ of $\overline{G}$ such that
$$
L_{\overline{G}}(D,a) := a 
$$
when $(D,a) \in E_{\overline{G}}$. Then $(\overline{G},L_{\overline{G}})$ is a right-resolving labeled graph and 
$L_{\overline{G}}(X_{\overline{G}}) =  Y$, cf. Theorem 3.3.2 in \cite{LM}. It is called the subset construction in \cite{LM}. Since $(G,L_G)$ is a presentation of $Y$, $\overline{G}$ will not contain sinks, but it may contain sources even though $G$ doesn't. Hence, in general $(\overline{G},L_{\overline{G}})$ first becomes a presentation of $Y$ in the strict sense in which we use the term, after sinks have (succesively) been removed.

\subsection{Merging}
The subset construction is a well-known construction in automata theory and so is the following construction which is referred to as minimization in \cite{N}. The focus here is on symbolic dynamics rather than on formal languages as in the theory of automata, and since the settings are also slightly different, we will use the terminology suggested in \cite{LM} and call the construction \emph{merging}.

Let $(H,L_H)$ be a labeled graph and assume that $H$ does not contain sinks. Given a vertex $v \in V_H$ the follower set $f_H(v)$ is the set
$$
f_H(v) := \left\{L_H(x): \ x \in X_H[0,\infty), \ s_H(x) = v \right\}.
$$ 
The \emph{merged graph} $([H],L_{[H]})$ of $(H,L_H)$ is then defined as follows, cf. page 78 of \cite{LM}. The set of vertices $V_{[H]}$ is the set
$$
V_{[H]} := \left\{f_H(v): \ v \in V_H\right\}
$$
and for $a \in A$ there is a labeled arrow $f_H(v) \overset{a}{\to} f_H(w)$ in $[H]$ when there are vertices $v',w' \in V_H$ such that $f_H(v') = f_H(v), \ f_H(w') = f_H(w)$ and there is a labeled arrow $v' \overset{a}{\to} w'$ in $H$. When $(H,L_H)$ is right-resolving we can define a map $f_H: E_H \to E_{[H]}$ such that $f_H(e) \in E_{[H]}$ is the arrow with the properties
\begin{itemize}
\item $s_{[H]}(f_H(e)) = f_H\left(s_H(e)\right)$, and
\item $L_{[H]}(e) = L_H(e)$.
\end{itemize}
Then $f_H:H \to [H]$ is a labeled-graph homomorphism.

 \begin{lemma}\label{03-11-24x} Assume that $(H,L_H)$ is right-resolving. Let $v,w \in V_{H}$. There is a labeled arrow $f_H(v) \overset{a}{\to} f_H(w)$ in $([H],L_{[H]})$ if and only if there is a labeled arrow $v \overset{a}{\to} u$ in $(H,L_H)$ such that $f_H(u) =f_H(w)$. 
\end{lemma}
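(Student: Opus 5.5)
The plan is to prove the two implications separately. The key tool is the standard identity for follower sets in a right-resolving graph without sinks: for every vertex $v$ and every $a \in A$,
$$
\frac{f_H(v)}{a} = \bigcup_{v \overset{a}{\to} u} f_H(u),
$$
and in the right-resolving case the union has at most one term. So if there is an arrow $v \overset{a}{\to} u$, it is the unique such arrow and $\frac{f_H(v)}{a} = f_H(u)$. If there is no arrow out of $v$ labeled $a$, then $\frac{f_H(v)}{a} = \emptyset$. I will first verify this identity directly from the definition of $f_H(v)$: a right-infinite path starting at $v$ whose label begins with $a$ must begin with the unique $a$-labeled edge out of $v$, and the rest of the path is a right-infinite path from its terminal vertex $u$. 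The converse inclusion is immediate because $H$ has no sinks, so any right-infinite path from $u$ can be prefixed by the edge $v \overset{a}{\to} u$.

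For the \emph{if} direction, suppose there is an arrow $v \overset{a}{\to} u$ in $H$ with $f_H(u) = f_H(w)$. The definition of $[H]$ applies with $v' = v$ and $w' = u$, since $f_H(v') = f_H(v)$ and $f_H(w') = f_H(u) = f_H(w)$. This gives the labeled arrow $f_H(v) \overset{a}{\to} f_H(w)$ in $[H]$, and nothing more is needed.

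For the \emph{only if} direction, suppose there is a labeled arrow $f_H(v) \overset{a}{\to} f_H(w)$ in $[H]$. Then there are vertices $v', w'$ with $f_H(v') = f_H(v)$, $f_H(w') = f_H(w)$ and an arrow $v' \overset{a}{\to} w'$ in $H$. By the identity, $\frac{f_H(v')}{a} = f_H(w')$. This set is non-empty, because $H$ has no sinks and so every follower set is non-empty. Since $f_H(v) = f_H(v')$, we get $\frac{f_H(v)}{a} \neq \emptyset$, so $v$ has an outgoing edge labeled $a$, say $v \overset{a}{\to} u$. The identity applied at $v$ then gives
$$
f_H(u) = \frac{f_H(v)}{a} = \frac{f_H(v')}{a} = f_H(w') = f_H(w),
$$
as required.

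The only step that needs real care is the follower-set identity, specifically the use of right-resolvingness to make the union a single set. Without it, $\frac{f_H(v)}{a}$ could be a union of several follower sets and would not need to equal any single $f_H(u)$. The standing assumption that $H$ has no sinks is also needed, both to make follower sets non-empty and to extend paths through the edge $v \overset{a}{\to} u$.
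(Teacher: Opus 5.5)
Your proposal is correct and follows essentially the same route as the paper. The \emph{if} direction comes straight from the definition of $[H]$. For the \emph{only if} direction you use $f_H(v)=f_H(v')$ to produce an arrow $v \overset{a}{\to} u$, and then right-resolvingness to get $f_H(u)=\frac{f_H(v)}{a}=\frac{f_H(v')}{a}=f_H(w')=f_H(w)$. Your version additionally spells out the follower-set identity and the use of the no-sinks assumption, both of which the paper leaves implicit.
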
 
\begin{proof} Assume that $f_H(v) \overset{a}{\to} f_H(w)$ in $[H]$ and let $v_1, w_1 \in V_{H}$ be such that $v_1 \overset{a}{\to} w_1$ in $H$, $f_H(v_1) = f_H(v)$ and $f_H(w_1) = f_H(w)$. Since $f_H(v) = f_H(v_1)$ there is an arrow $e$ in $E_{H}$ labeled $a$ which leaves $v$. Set $u:= t_{H}(e)$. By using that $(H,L_{H})$ is right-resolving we find that
$$
f_H(u) = \frac{f_H(v)}{a} = \frac{f_H(v_1)}{a} = f_H(w_1) .
$$
This establishes the 'only if' part. The 'if' part is trivial.
\end{proof}

\begin{lemma}\label{10-09-25} Assume that $(H,L_H)$ is right-resolving. Then $([H],L_{[H]})$ is right-resolving and follower-separated, and there is a labeled-graph homomorphism $f_H: H \to [H]$ such that $f_H: X_H \to X_{[H]}$ is a factor map. If $(H,L_H)$ is regular, then so is $([H],L_{[H]})$.
\end{lemma}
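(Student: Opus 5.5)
The plan is to verify the four assertions in turn, using Lemma \ref{03-11-24x} throughout. The basic identity is this: for a right-resolving $(H,L_H)$ and an arrow $v \overset{a}{\to} u$ in $H$, we have $f_H(u) = \frac{f_H(v)}{a}$, because the arrow labeled $a$ leaving $v$ is unique. Consequently, a vertex $f_H(v)$ of $[H]$ has an outgoing arrow labeled $a$ exactly when $\frac{f_H(v)}{a} \neq \emptyset$, and by Lemma \ref{03-11-24x} its terminal vertex must be $\frac{f_H(v)}{a}$. This yields \emph{right-resolving}: two arrows with the same start $D = f_H(v)$ and the same label $a$ both end at $\frac{D}{a}$. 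Here we treat the arrows of $[H]$ as determined by the triple (start, label, end), as in the definition of the merged graph, so the two arrows coincide.

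Next comes \emph{follower-separation}. I would first show that $f_{[H]}(f_H(v)) = f_H(v)$. For the inclusion $\supseteq$, take $x = x_0x_1\cdots \in f_H(v)$, realized by a path $v = v_0 \to v_1 \to \cdots$ in $H$. Its image $f_H(v_0) \to f_H(v_1) \to \cdots$ is a path in $[H]$ with the same labels. For the inclusion $\subseteq$, take a path in $[H]$ starting at $f_H(v)$ with labels $x_0x_1\cdots$. By the identity above, its $n$-th vertex is $\frac{f_H(v)}{x_0\cdots x_{n-1}}$, which is nonempty. Hence every finite prefix $x_0\cdots x_{n-1}$ labels a path from $v$ in $H$. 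Since $H$ is finite, a compactness (K\"onig) argument upgrades this to an infinite path from $v$ with label $x$, so $x \in f_H(v)$. With $f_{[H]}(D) = D$ established, distinct vertices of $[H]$ have distinct follower sets, which is exactly follower-separation.

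For the \emph{homomorphism and factor map}, $f_H$ on edges is well defined because $[H]$ is right-resolving. It is a graph homomorphism since $t_{[H]}(f_H(e)) = \frac{f_H(s_H(e))}{a} = f_H(t_H(e))$, and it preserves labels. It therefore induces a continuous shift-commuting map $X_H \to X_{[H]}$, and surjectivity is the only substantive point. Given a ray $(\epsilon_i)$ in $[H]$, I would fix $n$ and choose $v$ with $f_H(v) = s_{[H]}(\epsilon_{-n})$. Then I lift $\epsilon_{-n}\epsilon_{-n+1}\cdots\epsilon_n$ step by step: at each step, Lemma \ref{03-11-24x} supplies an arrow in $H$ with the right label whose terminal vertex has the right follower set. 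This gives a path in $H$ mapping onto $\epsilon_{[-n,n]}$. These central blocks exist for every $n$, and because $H$ is finite and has no sinks or sources, compactness of $X_H$ produces a full ray mapping onto $(\epsilon_i)$. I expect this compactness bookkeeping, needed to keep the finite lifts extendable to bi-infinite rays, to be the main obstacle.

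Finally, \emph{regularity}. Take a vertex $f_H(v)$ of $[H]$ and pick $z \in X_H$ witnessing regularity of $v$, so that $t_H(z_{(-\infty,-1]}) = v$ and $f_H(v) = F(L_H(z))$. Consider $z' = f_H(z) \in X_{[H]}$. Then $t_{[H]}(z'_{(-\infty,-1]}) = f_H(v)$ and $L_{[H]}(z') = L_H(z)$. Combined with $f_{[H]}(f_H(v)) = f_H(v)$, this gives
$$
f_{[H]}(f_H(v)) = F(L_{[H]}(z')),
$$
which is regularity of $[H]$ at $f_H(v)$.
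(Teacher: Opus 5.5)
Your proof is correct and follows essentially the same route as the paper. Surjectivity comes from lifting finite paths via Lemma~\ref{03-11-24x} and then using compactness of $X_H$, and regularity comes from pushing a witnessing ray $z$ forward to $f_H(z)$ and using $f_{[H]}(f_H(v)) = f_H(v)$. The only difference is that you derive right-resolvingness and follower-separation directly from that identity, whereas the paper simply cites Lemma 3.3.8 of \cite{LM}.
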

\begin{proof} $([H],L_{[H]})$ is right-resolving and follower-separated by Lemma 3.3.8 of \cite{LM}. To see that $f_H: X_H \to X_{[H]}$ is surjective, consider a finite path 
\begin{equation}\label{03-09-25d}
f_H(v_1) \overset{a_1}{\to} f_H(v_2) \overset{a_2}{\to} f_H(v_3) \overset{a_3}{\to} \cdots \cdots \overset{a_{n-1}}{\to} f_H(v_n)
\end{equation}
in $[H]$. Repeated applications of Lemma \ref{03-11-24x} give a path 
$$
v_1 \overset{a_1}{\to} u_2 \overset{a_2}{\to}  u_3 \overset{a_3}{\to} \cdots \cdots \overset{a_{n-1}}{\to} u_n
$$ 
in $H$ which is mapped to \eqref{03-09-25d} under $f_H$. The surjectivity of $f_H$ follows therefore from compactness of $ X_{H}$. 

Assume now that $(H,L_H)$ is regular and consider a vertex $v \in V_H$. There is a ray $z' \in X_H$ such that $t_H(z'_{(-\infty,-1]}) = v$ and $f_H(v) = F(L_H(z'))$. Set $z:= f_H(z') \in X_{[H]}$ and note that $t_{[H]}(z_{(-\infty,-1]}) = f_H( t_H(z'_{(-\infty,-1]})) = f_H(v)$. An application of Lemma \ref{03-11-24x} as above shows that $f_{[H]}(f_H(v)) = f_H(v)$. Since $L_{[H]}(z) = L_H(z')$ we conclude first that $f_{[H]}(f_H(v)) = F(L_{[H]}(z))$ and then that $([H],L_{[H]})$ is regular.
\end{proof}

We note that in the setting of Lemma \ref{10-09-25} the labeled-graph homomorphism $f_H: H \to [H]$ is injective on $V_H$ if and only if it is an isomorphism of labeled graphs, and this happens if and only if $(H,L_H)$ is follower-separated.

\begin{rem}\label{24-07-266}
As mentioned above, the construction of the merged graph is well-known, but it seems not to have been noticed that it gives rise to a factor map when applied to a right-resolving presentation. This property is the main reason it appears in this paper.
\end{rem}

\subsection{From the subset construction to the future cover}\label{consec}

Let $(G,L_G)$ be a presentation of the sofic shift $Y =L_G(X_G) \subseteq A^\mathbb Z$. When $y \in Y$, set
$$
D^y:= \left\{ t_G(x) : \ x \in X_G(-\infty,-1], \ L_G(x) = y_{(-\infty,-1]} \right\} .
$$
The set of vertices $V_{\underline{G}} := \left\{D^y: \ y \in Y\right\}$ is a hereditary subset of vertices in $V_{\overline{G}}$, and when we set $E_{\underline{G}} := \left\{e \in E_{\overline{G}}: \ s_{\overline{G}}(e) \in V_{\underline{G}}\right\}$, we have defined a graph $\underline{G}$, and hence also a labeled graph $(\underline{G}, L_{\underline{G}})$ where $L_{\underline{G}}:= L_{\overline{G}}|_{E_{\underline{G}}}$. Thus $(\underline{G},L_{\underline{G}})$ is a hereditary labeled subgraph of $(\overline{G},L_{\overline{G}})$ in the sense of \cite{Th1}.

Note that we can define a shift-commuting map $\alpha_G : Y \to X_{\underline{G}}$ such that 
\begin{itemize}
\item[a)] $s_{\underline{G}}(\alpha_G(y)_k) = D^{\sigma^k(y)}$ for all $k \in \mathbb Z$, and
\item[b)] $L_{\underline{G}}\left(\alpha_G(y)\right) = y$.
\end{itemize}
In particular, $L_{\underline{G}}(X_{\underline{G}}) = Y$ and $(\underline{G}, L_{\underline{G}})$ is a right-resolving presentation of $Y$.

\begin{lemma}\label{07-02-26} $(\underline{G},L_{\underline{G}})$ is regular.
\end{lemma}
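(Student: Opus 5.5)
Fix a vertex of $\underline{G}$, which by definition has the form $D = D^y$ for some $y \in Y$. The natural witness for regularity is the ray $z := \alpha_G(y) \in X_{\underline{G}}$. By property a) of $\alpha_G$ we have $s_{\underline{G}}(z_0) = D^{y}$, hence $t_{\underline{G}}(z_{(-\infty,-1]}) = D^y$. By property b) we have $L_{\underline{G}}(z) = y$. So it remains to prove the identity
$$
f_{\underline{G}}(D^y) = F(y).
$$

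First I compute the follower set of an arbitrary vertex $D \in V_{\underline{G}}$ in terms of $G$. The claim is
$$
f_{\underline{G}}(D) = \bigcup_{v \in D} f_G(v).
$$
For the inclusion $\supseteq$, take $w = L_G(x)$ with $x$ a right-infinite path in $G$ starting at some $v \in D$. Inductively, $t_G(x_{[0,n]}) \in [\cdots[[D,w_0],w_1]\cdots,w_n]$, so every set in this chain is non-empty. Therefore the edges $(D,w_0), ([D,w_0],w_1), \dots$ exist in $\overline{G}$, and since $V_{\underline{G}}$ is hereditary they lie in $\underline{G}$. They form a right-infinite path labeled $w$.

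For the inclusion $\subseteq$, take a right-infinite path in $\underline{G}$ from $D$ labeled $w$. Then for each $n$ the set $[\cdots[D,w_0]\cdots,w_n]$ is non-empty, so some path in $G$ from a vertex of $D$ carries the label $w_{[0,n]}$. Since $D$ and $V_G$ are finite, a König's lemma / compactness argument produces a single right-infinite path in $G$ from some $v \in D$ labeled $w$. This compactness step is the only point requiring any care, and I expect it to be the main (mild) obstacle.

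Finally I show $\bigcup_{v\in D^y} f_G(v) = F(y)$. If $v = t_G(x)$ with $x$ a left-infinite path labeled $y_{(-\infty,-1]}$, and $w \in f_G(v)$ is the label of a right-infinite path $x'$ starting at $v$, then $xx'$ is a ray in $X_G$ whose label is $y_{(-\infty,-1]}w$. Hence $y_{(-\infty,-1]}w \in Y$ and $w \in F(y)$. Conversely, if $y_{(-\infty,-1]}w \in Y = L_G(X_G)$, choose a ray in $X_G$ with that label. Its left half ends at a vertex $v \in D^y$, and its right half shows $w \in f_G(v)$.

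Combining these gives $f_{\underline{G}}(D^y) = F(L_{\underline{G}}(z))$ with $t_{\underline{G}}(z_{(-\infty,-1]}) = D^y$, which is exactly the regularity condition at $D^y$. Since $D^y$ was an arbitrary vertex, $(\underline{G},L_{\underline{G}})$ is regular.
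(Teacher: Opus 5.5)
Your proof is correct and follows the paper's. You use the same witness $\alpha_G(y)$, and the same argument for $F(y)\subseteq f_{\underline{G}}(D^y)$: lift $y_{(-\infty,-1]}w$ to a ray $x\in X_G$, note $t_G(x_{(-\infty,-1]})\in D^y$, and push $x_{[0,\infty)}$ into $\underline{G}$ by heredity.

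The paper leaves the reverse inclusion implicit, and it needs no K\"onig argument. Prefixing a right-infinite path in $\underline{G}$ from $D^y$ by $\alpha_G(y)_{(-\infty,-1]}$ gives a ray in $X_{\underline{G}}$, and its label lies in $L_{\underline{G}}(X_{\underline{G}})=Y$.
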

\begin{proof} Let $y \in Y$. To show that $D^y \in V_{\underline{G}}$ is regular, note that $t_{\underline{G}}(\alpha_G(y)_{(-\infty,-1]}) = D^y$. Let $z \in Y[0,\infty)$ such that $L_{\underline{G}}(\alpha_G(y)_{(-\infty,-1]})z = y_{(-\infty,-1]}z \in Y$. There is an $x \in X_G$ such that $L_G(x) =y_{(-\infty,-1]}z$. Then $t_G(x_{(-\infty,-1]}) \in D^y$ and it follows therefore from the definition of $(\underline{G},L_{\underline{G}})$ that $z \in f_{\underline{G}}(D^y)$.
\end{proof}


\begin{prop}\label{07-02-26a} There is an isomorphism $\theta: \ ([\underline{G}],L_{[\underline{G}]})     \to (\mathbb K(Y),L_{\mathbb K(Y)})$ of labeled graphs such that
$$
\theta(f_{\underline{G}}(D^y)) = F(y)
$$
for all $y \in Y$.
\end{prop}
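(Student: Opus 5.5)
The core of the argument is the identity
$$
f_{\underline{G}}(D^y) = F(y) \qquad \text{for all } y \in Y,
$$
which says that the follower set of the vertex $D^y$ in $(\underline{G},L_{\underline{G}})$ is exactly the future set of $y$. Once this holds, the map $\theta$ is forced: the vertices of $[\underline{G}]$ are the sets $f_{\underline{G}}(D^y)$, the vertices of $\mathbb K(Y)$ are the sets $F(y)$, and both range over all $y \in Y$. So $\theta$ is the identity map on these subsets of $A^{\mathbb N}$, and it is a well-defined bijection $V_{[\underline{G}]} \to V_{\mathbb K(Y)}$.

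To prove the identity, I would first note that right-resolvingness gives, for a vertex $D$ of $\underline{G}$,
$$
f_{\underline{G}}(D) = \left\{L_G(x): \ x \in X_G[0,\infty), \ s_G(x) \in D\right\}.
$$
Indeed, a labeled path in $\underline{G}$ from $D$ reading $a_0a_1\cdots$ passes through $[D,a_0], [[D,a_0],a_1],\dots$, all nonempty. By König's lemma (compactness), this is the same as the existence of an infinite path in $G$ from some vertex of $D$ reading $a_0a_1\cdots$.

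The inclusion $F(y) \subseteq f_{\underline{G}}(D^y)$ is essentially the argument of Lemma \ref{07-02-26}. Given $w \in F(y)$, lift $y_{(-\infty,-1]}w$ to a ray $x \in X_G$. Then $t_G(x_{(-\infty,-1]}) \in D^y$ and $x_{[0,\infty)}$ reads $w$.

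For the converse inclusion, take $w = L_G(x')$ with $x' \in X_G[0,\infty)$ and $s_G(x') = t_G(x)$, where $x \in X_G(-\infty,-1]$ is labeled $y_{(-\infty,-1]}$. The concatenation $xx'$ is a ray in $G$ with label $y_{(-\infty,-1]}w$, so $w \in F(y)$. This is the only step needing care, namely the compactness argument used to identify follower sets with sets of labels of infinite paths in $G$.

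It remains to handle edges. Since $(\underline{G},L_{\underline{G}})$ is right-resolving, Lemma \ref{03-11-24x} shows that $f_{\underline{G}}(D^y) \overset{a}{\to} f_{\underline{G}}(D^{y'})$ in $[\underline{G}]$ if and only if there is an arrow $D^y \overset{a}{\to} u$ in $\underline{G}$ with $f_{\underline{G}}(u) = f_{\underline{G}}(D^{y'})$. Right-resolvingness gives $f_{\underline{G}}(u) = f_{\underline{G}}(D^y)/a$, and such an arrow exists exactly when this set is nonempty. By the identity above, the condition therefore reads $F(y') = F(y)/a$, which is precisely the definition of an arrow $F(y) \overset{a}{\to} F(y')$ in $\mathbb K(Y)$.

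Both graphs are right-resolving, with at most one arrow of each label from each vertex. Hence $\theta$ extends uniquely to the edges, preserving labels, and gives the required isomorphism of labeled graphs with $\theta(f_{\underline{G}}(D^y)) = F(y)$.
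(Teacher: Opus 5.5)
Your proof is correct, but it takes a more direct route than the paper.

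\textbf{The paper's route.} The paper never computes the follower sets in both directions. It checks that $([\underline{G}],L_{[\underline{G}]})$ is right-resolving, follower-separated and regular, using Lemmas \ref{10-09-25} and \ref{07-02-26}. The latter is essentially your inclusion $F(y)\subseteq f_{\underline{G}}(D^y)$, with the reverse inclusion left implicit. It then invokes Proposition 3.1 of \cite{Th1}. That result embeds any such presentation injectively into $\mathbb K(Y)$ as a hereditary labeled subgraph, with $\theta(f_{\underline{G}}(D^y))=F(L_{[\underline{G}]}(f_{\underline{G}}(\alpha_G(y))))=F(y)$. Surjectivity on vertices comes from $\alpha_G$, and surjectivity on edges from heredity of the range.

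\textbf{Your route.} You prove the identity $f_{\underline{G}}(D^y)=F(y)$ outright, with the K\"onig's lemma step handling the inclusion $f_{\underline{G}}(D^y)\subseteq F(y)$. You then observe, via Lemma \ref{03-11-24x} and right-resolvingness, that arrows in $[\underline{G}]$ and in $\mathbb K(Y)$ are governed by the same condition $F(y')=F(y)/a$. Nonemptiness of $F(y)/a$ is automatic there, since $F(y')\neq\emptyset$.

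\textbf{Comparison.} Your argument is self-contained, with no appeal to \cite{Th1}. Injectivity of $\theta$ and surjectivity on edges come for free. It also shows the two labeled graphs are literally equal, not merely isomorphic, which is exactly what Corollary \ref{04-09-25d} records. The paper's route is shorter given \cite{Th1}. It also stays within the regularity and follower-separation framework that the paper reuses later: in Lemma \ref{26-07-26a}, the future cover of $\eth Y$ is identified the same way, via Corollary 3.3 of \cite{Th1}.
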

\begin{proof} It follows from Lemma \ref{10-09-25} and Lemma \ref{07-02-26} that $([\underline{G}],L_{[\underline{G}]})$ is right-resolving, follower-separated and regular. It follows therefore from Proposition 3.1 of \cite{Th1} and its proof that there is an injective labeled-graph homomorphism $\theta: \ ([\underline{G}],L_{[\underline{G}]}) \to (\mathbb K(Y),L_{\mathbb K(Y)})$ such that 
$$
\theta(f_{\underline{G}}(D^y)) = F(L_{[\underline{G}]}(f_{\underline{G}}(\alpha_G(y)))).
$$
Since $L_{[\underline{G}]}(f_{\underline{G}}(\alpha_G(y))) = L_{\underline{G}}(\alpha_G(y)) = y$ we see that $\theta$ maps $V_{[\underline{G}]}$ onto $V_{\mathbb K(Y)}$. The range of $\theta$ is a hereditary subgraph of $\mathbb K(Y)$ by Proposition 3.1 in \cite{Th1} so it follows that $\theta$ is also surjective.
\end{proof}

 The last proposition allows us to make the following identification which we shall use from now on:

\begin{corollary}\label{04-09-25d}  $(\mathbb K(Y),L_{\mathbb K(Y)}) = ([\underline{G}],L_{[\underline{G}]})$ as labeled graphs.
\end{corollary}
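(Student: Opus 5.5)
The plan is to obtain Corollary \ref{04-09-25d} directly from Proposition \ref{07-02-26a}, by transporting the labeled graph $\mathbb K(Y)$ along the isomorphism $\theta$ and renaming vertices. Concretely, I identify the vertex $f_{\underline{G}}(D^y)$ of $[\underline{G}]$ with the vertex $F(y)$ of $\mathbb K(Y)$.

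First I check that this identification is well defined and bijective on vertices. Every vertex of $[\underline{G}]$ has the form $f_{\underline{G}}(D^y)$ for some $y \in Y$, because $V_{\underline{G}} = \{D^y : y \in Y\}$. Proposition \ref{07-02-26a} gives $\theta(f_{\underline{G}}(D^y)) = F(y)$. So whenever $f_{\underline{G}}(D^y) = f_{\underline{G}}(D^{y'})$ we get $F(y) = F(y')$, and the converse holds because $\theta$ is injective. Thus the rule $f_{\underline{G}}(D^y) \mapsto F(y)$ is exactly the vertex map of $\theta$, and it is a bijection $V_{[\underline{G}]} \to V_{\mathbb K(Y)}$.

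Next I check that the edges and labels match. Both graphs are right-resolving: $[\underline{G}]$ by Lemma \ref{10-09-25}, and $\mathbb K(Y)$ by the definition of its arrows via $\frac{F(y)}{a}$. In each graph an edge is therefore determined by its start vertex and its label. Since $\theta$ is an isomorphism of labeled graphs, there is an arrow $f_{\underline{G}}(D^y) \overset{a}{\to} f_{\underline{G}}(D^{y'})$ in $[\underline{G}]$ if and only if $F(y) \overset{a}{\to} F(y')$ in $\mathbb K(Y)$. Identifying each edge with the pair (start vertex, label) then makes the edge sets and the labelings literally coincide after the renaming.

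There is no genuine obstacle here; the statement is a convention that replaces the concrete vertices of $\mathbb K(Y)$ by their images under $\theta^{-1}$. The only point needing care is that later uses of the identification are consistent. For that I record that $f_{\underline{G}}: \underline{G} \to \mathbb K(Y)$ then becomes the labeled-graph homomorphism $D^y \mapsto F(y)$, which by Lemma \ref{10-09-25} induces a factor map $X_{\underline{G}} \to X_{\mathbb K(Y)}$. This is the form in which I expect the corollary to be invoked.
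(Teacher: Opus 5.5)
Your proposal is correct and is essentially the paper's own argument. The corollary is just the isomorphism $\theta$ of Proposition~\ref{07-02-26a} used as an identification, with $f_{\underline{G}}(D^y)$ identified with $F(y)$; your checks of bijectivity on vertices and of matching labeled arrows only spell out what it means for $\theta$ to be an isomorphism of labeled graphs.
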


Combined with Lemma \ref{10-09-25} we get also the following 

\begin{corollary}\label{24-07-26b} The merging $f_{\underline{G}} : X_{\underline{G}} \to X_{\mathbb K(Y)}$ is a factor map.
\end{corollary}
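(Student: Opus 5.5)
The statement follows by combining Lemma \ref{10-09-25} with Corollary \ref{04-09-25d}, so the plan is to check that the hypotheses of Lemma \ref{10-09-25} hold for $H = \underline{G}$ and then transport the conclusion along the identification.

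\emph{Step 1: the hypothesis of Lemma \ref{10-09-25}.} We must know that $(\underline{G},L_{\underline{G}})$ is right-resolving and has no sinks. Right-resolving is inherited from the subset construction $(\overline{G},L_{\overline{G}})$: an edge of $\overline{G}$ is the pair $(D,a)$, so from a given vertex $D$ there is at most one edge with label $a$. Absence of sinks follows because each vertex $D^y$ is the start of the edge $\alpha_G(y)_0$. For the same reason heredity holds: the terminal vertex of $\alpha_G(y)_k$ is $D^{\sigma^{k+1}(y)} \in V_{\underline{G}}$. More generally, if $[D^y,a]\neq\emptyset$, one should check that $[D^y,a] = D^{y'}$ for some $y'$. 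The paper asserts this heredity when defining $\underline{G}$, and we take it as given.

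\emph{Step 2: apply the lemma.} Lemma \ref{10-09-25} then gives that the merging $f_{\underline{G}}: X_{\underline{G}} \to X_{[\underline{G}]}$ is a factor map. Its proof shows this directly: any finite path in $[\underline{G}]$ lifts, by Lemma \ref{03-11-24x}, to a finite path in $\underline{G}$, and a compactness argument then gives surjectivity on rays. Continuity and shift-commutation are automatic, since $f_{\underline{G}}$ is induced by a graph homomorphism.

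\emph{Step 3: transport.} By Proposition \ref{07-02-26a} and Corollary \ref{04-09-25d}, $[\underline{G}]$ is identified with $\mathbb K(Y)$ via the labeled-graph isomorphism $\theta$. Hence $\theta\circ f_{\underline{G}}$, which we write simply as $f_{\underline{G}}$, is a factor map $X_{\underline{G}} \to X_{\mathbb K(Y)}$.

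The only point needing care is the no-sink/heredity condition in Step 1, because the merged graph and the map on edges are defined under that hypothesis. Everything else is a direct citation.
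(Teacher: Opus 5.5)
Your proposal is correct and is essentially the paper's argument: the corollary is obtained by applying Lemma \ref{10-09-25} to the right-resolving graph $(\underline{G},L_{\underline{G}})$ and transporting the conclusion along the identification $([\underline{G}],L_{[\underline{G}]}) = (\mathbb K(Y),L_{\mathbb K(Y)})$ of Corollary \ref{04-09-25d}. The heredity you take as given is easy to check directly: if $[D^y,a]\neq\emptyset$, then since $G$ has no sinks there is $y'\in Y$ with $y'_{(-\infty,0]}=y_{(-\infty,-1]}a$, and then $[D^y,a]=D^{\sigma(y')}$.
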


\begin{rem}\label{24-07-26a} As mentioned in the introduction, Nasu gave in \cite{N} a description of another way of using the subset construction to get from an arbitrary presentation of a sofic shift to its future cover, or Krieger cover as Nasu calls it. He does this by first realizing the follower set graph, whose definition can be found on page 73 of \cite{LM}, as the merged graph of the subset construction and then he identifies the future cover with a label subgraph of the follower set graph. As in this paper the procedure involves merging and the passage to a subgraph but in different order: While Nasu first merges and then passes to a subgraph, we do it here in the opposite order. As a result the future cover appears here as a factor of an edge-shift rather than a subshift of one.  
\end{rem}

The right-inverse $\alpha_G$ for the labeling $L_G$ which we introduced above, and the right-inverse $\alpha_{Y}$ for the of labeling $L_{\mathbb K(Y)}$ of the future cover which was introduced by Krieger in \cite{Kr1} and used in \cite{Th1}, will play  important roles in the following. Recall that $\alpha_Y : Y \to X_{\mathbb K(Y)}$ is defined such that  
\begin{itemize}
\item[a)] $s_{\mathbb K(Y)}(\alpha_Y(y)_i) = F(\sigma^i(y)), \ i \in \mathbb Z$, and
\item[b)] $L_{\mathbb K(Y)}(\alpha_Y(y)) = y$.
\end{itemize}
The two right-inverses for the labelings are neatly related via the merging:

\begin{lemma}\label{27-01-26b} $f_{\underline{G}} \circ \alpha_G = \alpha_Y$.
\end{lemma}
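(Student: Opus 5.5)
The plan is to compare the two rays coordinate by coordinate. Since $X_{\mathbb K(Y)}$ consists of rays in a right-resolving labeled graph, an edge of $\mathbb K(Y)$ is determined by its start vertex and its label. It therefore suffices to show, for every $y \in Y$ and every $k \in \mathbb Z$, that the edges $f_{\underline{G}}(\alpha_G(y)_k)$ and $\alpha_Y(y)_k$ have the same label and the same start vertex.

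For the labels, $f_{\underline{G}}$ preserves labels by construction. Together with b) for $\alpha_G$ and b) for $\alpha_Y$, this gives
$$
L_{\mathbb K(Y)}\left(f_{\underline{G}}(\alpha_G(y)_k)\right) = L_{\underline{G}}(\alpha_G(y)_k) = y_k = L_{\mathbb K(Y)}(\alpha_Y(y)_k).
$$

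For the start vertices, the definition of $f_{\underline{G}}$ on edges gives
$$
s_{\mathbb K(Y)}\left(f_{\underline{G}}(\alpha_G(y)_k)\right) = f_{\underline{G}}\left(s_{\underline{G}}(\alpha_G(y)_k)\right) = f_{\underline{G}}\left(D^{\sigma^k(y)}\right),
$$
using a) for $\alpha_G$. Under the identification of Corollary \ref{04-09-25d}, which comes from the isomorphism $\theta$ of Proposition \ref{07-02-26a}, the vertex $f_{\underline{G}}(D^{\sigma^k(y)})$ equals $F(\sigma^k(y))$. By a) for $\alpha_Y$, this is $s_{\mathbb K(Y)}(\alpha_Y(y)_k)$. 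Since $\mathbb K(Y)$ is right-resolving, the two edges coincide for every $k$, and the identity follows.

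The only point needing care is the identification step. It should be checked directly that $f_{\underline{G}}(D^y) = F(y)$ as subsets of $Y[0,\infty)$, rather than only up to $\theta$, so that the vertex equality is literal. The inclusion $F(y) \subseteq f_{\underline{G}}(D^y)$ is the argument of Lemma \ref{07-02-26}. For the converse, take a path in $\underline{G}$ from $D^y$ with label $z$. By the definition of $[D,a]$ and compactness, this lifts to a right-infinite path in $G$ starting at some vertex of $D^y$. That vertex is the terminal vertex of a left-infinite path labeled $y_{(-\infty,-1]}$, so concatenating gives $y_{(-\infty,-1]}z \in Y$, i.e. $z \in F(y)$. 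I expect no real obstacle beyond this bookkeeping.
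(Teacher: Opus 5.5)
Your proof is correct and follows the paper's own argument: you compare labels and start vertices of $f_{\underline{G}}(\alpha_G(y)_k)$ and $\alpha_Y(y)_k$ coordinatewise and conclude via right-resolvingness of $\mathbb K(Y)$. Your extra check that $f_{\underline{G}}(D^y)=F(y)$ holds literally (via Lemma \ref{07-02-26} for one inclusion, and the compactness lift plus the definition of $D^y$ for the other) is sound; it spells out what the paper takes from Proposition \ref{07-02-26a} and the identification in Corollary \ref{04-09-25d}.
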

\begin{proof} Let $y \in Y$. Then $L_{\mathbb K(Y)}( f_{\underline{G}} \circ \alpha_G(y)) = L_{\underline{G}}(\alpha_G(y)) = y = L_{\mathbb K(Y)}(\alpha_Y(y))$, and 
$$
s_{\mathbb K(Y)} (f_{\underline{G}} \circ \alpha_G(y)_k) = f_{\underline{G}}(s_{\underline{G}}(\alpha_G(y)_k)) =
f_{\underline{G}}(D^{\sigma^k(y)}) = F(\sigma^k(y)) = s_{\mathbb K(Y)}(\alpha_Y(y)_k)
$$
for all $k \in \mathbb Z$. Since $(\mathbb K(Y),L_{\mathbb K(Y)})$ is right-resolving it follows that $f_{\underline{G}} \circ \alpha_G(y) = \alpha_Y(y)$.
\end{proof}

\section{The canonicity of the subset construction}

The labeled graph  $(\overline{G},L_{\overline{G}})$ contains an important subgraph $(G'',L_{G''})$ which was introduced and used in \cite{Th1}, and it will play a key role also in this paper. The set $V_{G''}$ of vertices consists of the non-empty subsets $F$ of vertices from $V_G$; i.e. $V_{G''} = V_{\overline{G}}$. For $F,F' \in  V_{G''}$ there is an edge $e' \in E_{G''}$ with $s_{G''} (e') = F$, $t_{G''}(e') = F'$ and label $L_{G''}(e') = a$ when
\begin{equation*}\label{23-06-25ay}
F \subseteq \left\{ s_G(e): \ e \in E_G, \ L_G(e) = a \right\} 
\end{equation*}
and
$$
F' = \left\{ t_G(e) : \ e \in E_G, \ s_G(e) \in F, \ L_G(e) = a \right\} .
$$ 
Thus $(G'',L_{G''})$ is the labeled subgraph of $(\overline{G},L_{\overline{G}})$ obtained by only allowing the arrows from $\overline{G}$ where all elements from $V_G$ in the start vertex emit an arrow with the given label. The other parts of $(\overline{G},L_{\overline{G}})$, the vertices and the labeling, remain the same. In particular, $X_{G''} \subseteq X_{\overline{G}}$. It follows from Lemma 4.2 in \cite{Th1} that $L_{G''}(X_{G''}) =Y$, but in general, $(G'',L_{G''})$ may contain both sources and sinks and hence only becomes a presentation of $Y$ after these have (succesively) been removed.

The following is Corollary 4.3 in \cite{Th1}.

\begin{lemma}\label{12-02-26bx}  Let $(G,L_G)$ and $(H,L_H)$ be right-resolving presentations of the sofic subshifts $Y$ and $Z$, respectively. Assume that $\psi : Y \to Z$ and $\phi : X_G \to X_H$ are conjugacies such that
\begin{equation}\label{09-10-25axy}
\xymatrix{
 X_G\ar[r]^-{\phi} \ar[d]_{L_G}& X_{H} \ar[d]^-{L_H}\\
   Y \ar[r]^{\psi} & Z}
\end{equation}
commutes. There is a conjugacy $\phi'' : X_{G''} \to X_{H''}$ such that
\begin{equation}\label{17-11-25cx}
\xymatrix{
 X_{G''} \ar[r]^-{\phi''} \ar[d]_{L_{G''}}&  X_{H''} \ar[d]^{L_{H''}} \\
Y   \ar[r]^-\psi & Z}
\end{equation}
commutes.
\end{lemma}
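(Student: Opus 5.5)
The plan is to describe the rays of $G''$ in terms of closed sets of rays of $G$. That description is functorial under conjugacies, so $\phi''$ can be defined by transporting it through $\phi$.

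\emph{Step 1.} I would describe $X_{G''}$. A ray in $G''$ is a sequence $(F_i,a_i)_{i\in\mathbb Z}$ with $F_{i+1}=[F_i,a_i]$, where every vertex of $F_i$ emits an edge labeled $a_i$. Set $y:=(a_i)\in Y$. Since $(G,L_G)$ is right-resolving, each $v\in F_i$ is the start of exactly one right-infinite path labeled $y_{[i,\infty)}$, and this path stays inside the sets $F_j$, $j\ge i$. Let $P_i$ be the finite set of these paths. Then $P_{i+1}=\{x_{[i+1,\infty)}: x\in P_i\}$, because $F_{i+1}$ is exactly the set of terminal vertices of the edges leaving $F_i$ with label $a_i$.

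Put
$$
\Omega:=\{x\in X_G:\ L_G(x)=y,\ x_{[j,\infty)}\in P_j \ \forall j\}.
$$
Every element of $P_i$ extends to the left through elements of $P_{i-1},P_{i-2},\dots$. By compactness (König's lemma) it follows that $\Omega$ is nonempty and closed, and that $F_i=\{s_G(x_i): x\in\Omega\}$ for all $i$. Conversely, let $\Omega\subseteq L_G^{-1}(y)$ be any nonempty closed set and put $F_i:=\{s_G(x_i):x\in\Omega\}$. Then $(F_i,y_i)_i$ is a ray in $G''$, again using right-resolvingness. So rays in $G''$ over $y$ are exactly the "vertex profiles" of nonempty closed subsets of $L_G^{-1}(y)$. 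Two such subsets give the same ray iff they have the same sets $F_i$, equivalently the same tail sets $P_i$.

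\emph{Step 2.} I would define $\phi''$. Let $\phi$ have memory $m$ and anticipation $n$. Given a ray $\xi=(F_i,y_i)$ in $G''$ with associated $\Omega$, let
$$
\phi''(\xi):=(F'_i,\psi(y)_i)_i,\qquad F'_i:=\{s_H(\phi(x)_i): x\in\Omega\}.
$$
This is a ray in $H''$ by Step 1 applied to the closed set $\phi(\Omega)\subseteq L_H^{-1}(\psi(y))$; here commutativity of \eqref{09-10-25axy} gives $L_H\circ\phi = \psi\circ L_G$. It is well defined because $\phi(x)_i$ depends only on $x_{[i-m,i+n]}$, and by right-resolvingness $x_{[i-m,i+n]}$ is determined by $s_G(x_{i-m})\in F_{i-m}$ and the word $y_{[i-m,i+n]}$. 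Thus $F'_i$ depends only on $F_{i-m}$ and $y_{[i-m,i+n]}$, not on the choice of $\Omega$. The same observation, together with the fact that $\psi$ is a sliding block code, shows that $\phi''$ is a shift-commuting sliding block code. The diagram \eqref{17-11-25cx} commutes by construction.

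\emph{Step 3.} I would construct the inverse. Applying the same recipe to $\phi^{-1}$ and $\psi^{-1}$ gives a map $X_{H''}\to X_{G''}$. To see that the composite is the identity, note that the ray associated with $\phi(\Omega)$ may be computed from $\phi(\Omega)$ itself. Pushing this set back under $\phi^{-1}$ returns $\Omega$, whose vertex profile is the original $(F_i)$. The other composite is handled symmetrically.

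The main obstacle is the well-definedness in Step 2: $\phi''(\xi)$ must not depend on the auxiliary set $\Omega$, and this is exactly where right-resolvingness of both presentations and the finite memory of $\phi$ are used. The other delicate point is the compactness argument in Step 1, which ensures that every ray of $G''$ arises from a genuine closed set of bi-infinite rays in $G$.
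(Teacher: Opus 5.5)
Your proposal is correct and is essentially the construction the paper relies on. The paper quotes this lemma as Corollary 4.3 of \cite{Th1} and records that, once each ray $x$ in $G''$ is identified with the finite subset $N(x)$ of the fibre of $L_G$, the conjugacy is given by $N(\phi''(x))=\phi(N(x))$; this is exactly your transport of $\Omega$ through $\phi$. One simplification: since $(G,L_G)$ is right-resolving, $L_G^{-1}(y)$ is finite, so your closed sets are just nonempty subsets and the K\"onig's lemma step reduces to repeated left-extension.
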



 Note that an arrow $h$ in $G''$ consists of, or can be identified with, a finite set $N(h)$ of arrows from $G$, all with the same label, such that
$$
s_{G''}(h) = \left\{ s_G(e): \ e \in N(h)\right\}
$$
and
$$
t_{G''}(h)  = \left\{ t_G(e): \ e \in N(h)\right\}.
$$
When $(G,L_G)$ is right-resolving, an element $x \in X_{G''}$ determines a finite set $N(x) \subseteq X_G$ such that
$$
N(x_k) = \left\{ z_k: \ z \in N(x)\right\}, \ k \in \mathbb Z,
$$
and $L_{G''}(x_k) = L_G(z_k)$ for all $z \in N(x)$ and all $k \in \mathbb Z$.
In terms of this identification of elements in $X_{H''}$ with subsets of $X_H$, the conjugacy $\phi''$ of Lemma \ref{12-02-26bx} is given by the formula
$$
N(\phi''(x)) = \phi(N(x))
$$
for $x \in X_{H''}$. See \cite{Th1}.

Lemma \ref{12-02-26bx} is used to state and prove the following

\begin{thm}\label{25-03-26}  Let $(G,L_G)$ and $(H,L_H)$ be right-resolving presentations of the sofic subshifts $Y$ and $Z$, respectively, and assume that there are conjugacies $\phi: X_G \to X_H$ and $\psi : Y \to Z$ such that the diagram \eqref{09-10-25axy} in Lemma \ref{12-02-26bx} commutes.
There is a unique conjugacy $\overline{\phi} : X_{\overline{G}} \to X_{\overline{H}}$ with the following two properties: 
\begin{itemize}
\item[(1)] $\overline{\phi}$ extends $\phi''$, and
\item[(2)]\begin{equation*}\label{25-03-26a}
\xymatrix{
 X_{\overline{G}} \ar[r]^-{\overline{\phi}} \ar[d]_{L_{\overline{G}}}& X_{\overline{H}} \ar[d]^-{L_{\overline{H}}}\\
   Y \ar[r]_{\psi} & Z}
\end{equation*}
commutes. 
\end{itemize}
\end{thm}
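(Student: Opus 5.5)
The plan is to describe $\overline{\phi}$ through left-infinite paths in $G$, mirroring the way $\phi''$ is described by $N(\phi''(x)) = \phi(N(x))$ in terms of rays. The basic observation concerns an element $x \in X_{\overline{G}}$. Write $s_{\overline{G}}(x_k) = D_k$ and $y := L_{\overline{G}}(x)$. Since $D_{k+1} = [D_k, y_k]$, every vertex of $D_{k+1}$ is reached from a vertex of $D_k$ by an arrow labeled $y_k$. By compactness, every $v \in D_k$ is therefore the terminal vertex of some left-infinite path $z \in X_G(-\infty,k-1]$ with $L_G(z) = y_{(-\infty,k-1]}$ and $s_G(z_j) \in D_j$ for all $j \leq k-1$. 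Call the set of such $z$ by $P_k(x)$. Then $D_k = \{t_G(z) : z \in P_k(x)\}$, and $x$ is determined by the family $(P_k(x))_k$. The vertices that cause the trouble compared to $G''$ are those whose future dies out: they have an infinite past inside $x$ but no infinite future with the label $y_{[k,\infty)}$.

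To define $\overline{\phi}$, fix $m$ larger than the memory and anticipation of block maps for $\phi$, $\psi$ and their inverses. Given $z \in P_k(x)$, extend it to a ray $zw \in X_G$, which is possible because $G$ has no sinks. The left-infinite path $\phi(zw)_{(-\infty,k-m-1]}$ depends only on $z$. Set
$$
D'_{k-m} := \left\{ t_H\big(\phi(zw)_{(-\infty,k-m-1]}\big) : \ z \in P_k(x) \right\}
\quad\text{and}\quad
\overline{\phi}(x)_{j} := \left(D'_{j}, \psi(y)_{j}\right).
$$
The following must then be checked.
\begin{itemize}
\item[(a)] The definition is independent of $k$. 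This uses right-resolvingness of $H$ and the hereditary relation $D_{k+1}=[D_k,y_k]$.
\item[(b)] $D'_{j+1} = [D'_j, \psi(y)_j]$, so that $\overline{\phi}(x) \in X_{\overline{H}}$ and $L_{\overline{H}} \circ \overline{\phi} = \psi \circ L_{\overline{G}}$. The label statement follows from $L_H\circ\phi = \psi \circ L_G$ applied to the extended rays $zw$, since only coordinates $\leq k-m-1$ are involved.
\item[(c)] $\overline{\phi}$ is shift-commuting and continuous, because it is given by a finite window on the $D_j$'s.
\item[(d)] The same recipe applied to $\phi^{-1},\psi^{-1}$ gives an inverse.
\item[(e)] On $X_{G''}$ the formula reduces to $N(\overline{\phi}(x)) = \phi(N(x))$, so $\overline{\phi}$ extends $\phi''$. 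Here every $z\in P_k(x)$ extends inside $x$, by definition of $G''$.
\end{itemize}

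I expect the main obstacle to be step (b) together with the inverse in (d). The set $D'_{j+1}$ must be shown to consist exactly of the targets of $\psi(y)_j$-labeled arrows from $D'_j$. An arrow in $H$ leaving $t_H(\phi(zw)_{(-\infty,j-1]})$ with the correct label must come from the $\phi$-image of some other left-infinite path in $P(x)$. For this I would use that $\phi^{-1}$ is also a block code. The left-infinite path in $H$ extended by that arrow is then pulled back by $\phi^{-1}$, which shows that it arises from some $z' \in P_k(x)$. The choice of $m$ is exactly what should make the windows line up.

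For uniqueness, suppose $\overline{\phi}_1$ and $\overline{\phi}_2$ both satisfy (1) and (2). Since $\overline{H}$ is right-resolving and both maps intertwine the labelings, it suffices that $s_{\overline{H}}(\overline{\phi}_1(x)_0) = s_{\overline{H}}(\overline{\phi}_2(x)_0)$ for all $x$. I would try to show that $X_{G''}$, together with its shifts, is sufficiently "dense" in the sense that every vertex set $D_0$ appearing in some $x$ is forced by the $G''$-part of $x$ and its left tail. Concretely, I would approximate the left tail of $x$ by elements of $X_{G''}$ agreeing on long windows, and then use continuity of $\overline{\phi}_i$ together with property (1). If that density fails, the fallback is to show that any conjugacy satisfying (1) and (2) must be given by the left-infinite-path formula above. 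This would be done by comparing $\overline{\phi}_i$ on the elements of $X_{G''}$ obtained by restricting $x$ to a single left-infinite path $z\in P_k(x)$, suitably continued.
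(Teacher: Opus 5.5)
\textbf{There is a genuine gap.} Your map does not take values in $X_{\overline{H}}$, so step (b) fails, and changing $m$ does not fix it.

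Take $\phi=\id_{X_G}$ and $\psi=\id_Y$, where $\overline{\phi}$ ought to be the identity.
\begin{itemize}
\item Using $D_{i+1}=[D_i,y_i]$, your $D'_j$ is the set of $v\in D_j$ from which a path labeled $y_{[j,j+m-1]}$ starts.
\item By right-resolvingness, $[D'_j,y_j]$ is then the set of $w\in D_{j+1}$ from which $y_{[j+1,j+m-1]}$ can be read. Membership in $D'_{j+1}$, by contrast, requires $y_{[j+1,j+m]}$.
\item Suppose $x_{j+m}\notin E_{G''}$. Then some $u\in D_{j+m}$ emits no arrow labeled $y_{j+m}$. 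Since $D_{i+1}=[D_i,y_i]$, some $w\in D_{j+1}$ reaches $u$ along $y_{[j+1,j+m-1]}$, and this $w$ lies in $[D'_j,y_j]\setminus D'_{j+1}$.
\end{itemize}
So for every $x\in X_{\overline{G}}\setminus X_{G''}$ and every $m\geq 1$, your $\overline{\phi}(x)$ is not a path in $\overline{G}$. For the same reason (a) is false: the sets obtained from $P_k(x)$ shrink as $k$ grows.

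The repair you sketch for (b) cannot work either. Pulling back a ray that extends $\phi(zw)_{(-\infty,j-1]}e$ by $\phi^{-1}$ gives a ray whose label agrees with $y$ only up to about coordinate $j-m$. Hence it produces no element of $P_{j+m+1}(x)$.

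\textbf{The missing idea.} A vertex of $D_j$ whose future along $y_{[j,\infty)}$ dies within the window of $\phi$ lies on no ray of $G$ with label $y$. Its image therefore cannot be obtained by pushing individual paths through $\phi$. On the $H$-side the corresponding vertices must be produced by the right-resolving dynamics of $\overline{H}$ itself, and this is what the paper does:
\begin{itemize}
\item Paths in a component of $\overline{G}$ are homogeneous and lie in $G''$. So on long component intervals of $x$ one sets $\overline{\phi}=\phi''$ locally.
\item Across the finitely many gaps between long component intervals, one follows the labels $\psi(y)$ in $\overline{H}$.
\item The crux, with no counterpart in your plan, is Lemma~\ref{15-02-26cx}: this continuation exists and arrives exactly at the initial vertex of the $\phi''$-image of the next long component interval. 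Its proof uses the maximal $G''$-path dominated by a finite $\overline{G}$-path (Lemma~\ref{25-03-26b}) and its compatibility with restriction when the ends are homogeneous (Lemma~\ref{13-02-26cx}).
\end{itemize}
This consistency is what makes $\overline{\phi}$ a well-defined sliding block code.

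\textbf{Uniqueness.} Density of $X_{G''}$ in $X_{\overline{G}}$ is false in general: a cylinder given by an arrow of $\overline{G}$ outside $G''$ misses $X_{G''}$. Your fallback through the formula is also unavailable, since the formula is wrong. What works is the following.
\begin{itemize}
\item The left tail of any $x\in X_{\overline{G}}$ lies in one component $C$ of $\overline{G}$, which is also a component of $G''$.
\item Hence two maps satisfying (1) and (2) send $x$ to rays agreeing on a left tail.
\item Right-resolvingness of $\overline{H}$ then forces the two images to be equal.
\end{itemize}
This uniqueness also gives your step (d) for free: $\overline{\phi^{-1}}\circ\overline{\phi}$ satisfies (1) and (2) for the identity maps, so it is the identity.
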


In the remaining part of the section we assume we are in the setting of Theorem \ref{25-03-26}. We need some preparations for its proof.

\begin{lemma}\label{25-03-26b} Let $\gamma = (\gamma_k)_{k\in [1,n]}$ be a finite path in ${\overline{G}}$.  

\begin{itemize}
\item[(1)] With respect to inclusion the set
$$
B := \left\{ v \in s_{\overline{G}}(\gamma): \ v=s_G(\mu)  \ \ \text{for a finite path $\mu$ in $G$ with $L_G(\mu) = L_{\overline{G}}(\gamma)$} \right\}
$$
is maximal in 
$$
\left\{ M \subseteq  s_{\overline{G}}(\gamma): \ M = s_{G''}(\nu)  \ \text{for a finite path $\nu$ in $G''$ with $L_{G''}(\nu) = L_{\overline{G}}(\gamma)$} \right\}.
$$
\item[(2)] There is a unique path $\delta = (\delta_k)_{k \in [1,n]}$ in $G''$ with $s_{G''}(\delta) = B$ and $L_{G''}(\delta) = L_{\overline{G}}(\gamma)$. 
\item[(3)] This path, $\delta$, has the property that $t_{G''}(\delta) = t_{\overline{G}}(\gamma)$. 
\item[(4)] Set $m := \max \{ k \leq n: \ \#t_{\overline{G}}(\gamma_k) = \# t_{\overline{G}}(\gamma_n)\}$. Then $t_{G''}(\delta_j) = t_{\overline{G}}(\gamma_j)$ for $j \in [m,n]$.
\end{itemize}
\end{lemma}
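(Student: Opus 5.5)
Throughout I use that $(G,L_G)$ is right-resolving, as in the setting of Theorem \ref{25-03-26}. Write $w = a_1a_2\cdots a_n := L_{\overline{G}}(\gamma)$ and $D := s_{\overline{G}}(\gamma)$. By definition of $\overline{G}$, $t_{\overline{G}}(\gamma_j)$ is the set of vertices $t_G(\mu)$ where $\mu$ is a path in $G$ with $s_G(\mu)\in D$ and $L_G(\mu) = a_1\cdots a_j$. Since $[D,w] = t_{\overline{G}}(\gamma)\neq\emptyset$, some vertex of $D$ starts a path labeled $w$, so $B\neq \emptyset$. The whole proof is a direct unwinding of these definitions, combined with the observation that right-resolvingness turns reading a word into a partial function on vertices.

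\textbf{Step 1 (describing paths in $G''$).} I first show by induction on $k$ that a path $\nu$ in $G''$ with $L_{G''}(\nu)=w$ and $s_{G''}(\nu)=M$ exists if and only if $M\neq\emptyset$ and every $v\in M$ starts a path in $G$ labeled $w$. In that case $t_{G''}(\nu_k)$ is the set of endpoints of the paths labeled $a_1\cdots a_k$ starting in $M$.

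For the ``only if'' direction: the $G''$-condition says every vertex in the current set emits the next label. Following these edges from $v\in M$ produces a path in $G$ labeled $w$.

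For the ``if'' direction I use right-resolvingness. From $v\in M$ there is exactly one path labeled $a_1\cdots a_k$, namely the prefix of the path labeled $w$. Hence every vertex of the $k$-th set emits $a_{k+1}$, and the $G''$-edges exist.

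It follows that every admissible $M\subseteq D$ satisfies $M\subseteq B$, and that $B$ itself is admissible. So $B$ is in fact the largest element of the collection, which gives (1). Since $G''$ is right-resolving (an edge is determined by its start vertex and its label), the path $\delta$ starting at $B$ with label $w$ is unique, which gives (2).

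\textbf{Step 2 (terminal vertices).} By Step 1, $t_{G''}(\delta_j)$ is the set of vertices reached from $B$ by paths labeled $a_1\cdots a_j$. Equivalently, it is the set of $u\in t_{\overline{G}}(\gamma_j)$ from which $a_{j+1}\cdots a_n$ can be read; right-resolvingness is what makes these two descriptions agree. For $j=n$ the reading condition is vacuous, because any path from $D$ labeled $w$ starts in $B$. Hence $t_{G''}(\delta)=t_{\overline{G}}(\gamma)$, which is (3).

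\textbf{Step 3 (the count in (4)).} This is the only step needing an idea beyond unwinding. Right-resolvingness gives a partial map $\pi_j : t_{\overline{G}}(\gamma_j)\to t_{\overline{G}}(\gamma_{j+1})$, defined by following the unique edge labeled $a_{j+1}$, and $\pi_j$ is surjective. Thus $\#t_{\overline{G}}(\gamma_j)$ is non-increasing in $j$. Consequently the cardinality is constant and equal to $\#t_{\overline{G}}(\gamma_n)$ on the whole interval $[m,n]$.

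For $j\in[m,n-1]$, $\pi_j$ is a surjective partial map between finite sets of equal size. It must therefore be total and bijective, so every vertex of $t_{\overline{G}}(\gamma_j)$ emits $a_{j+1}$. Iterating up to $n$, every $u\in t_{\overline{G}}(\gamma_j)$ can read $a_{j+1}\cdots a_n$. By Step 2 this gives $t_{G''}(\delta_j)=t_{\overline{G}}(\gamma_j)$ for $j\in[m,n]$, which is (4).
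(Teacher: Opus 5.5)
Your proof is correct and follows the paper's route. Steps 1--2 are the paper's argument for (1)--(3), with the right-resolving step behind the existence of $\delta$ from $B$ made explicit. Step 3 is the same right-resolving count: you run it letter by letter via the total bijections $\pi_i$, $i\in[m,n-1]$, whereas the paper counts once, noting that paths labeled $a_{j+1}\cdots a_n$ from $t_{G''}(\delta_j)$ reach all of $t_{\overline{G}}(\gamma)$.

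One point in your favour: your explicit monotonicity of $k\mapsto\#t_{\overline{G}}(\gamma_k)$ is exactly what makes (4) work under the intended reading of $m$ as a minimum. As literally written, the maximum is just $n$, and the paper uses this monotonicity without comment.
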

\begin{proof} (1) + (2) + (3): By definition of $\overline{G}$, for any element $w$ of $t_{\overline{G}}(\gamma)$ there is a path $\mu$ in $G$ such that $s_G(\mu) \in s_{\overline{G}}(\gamma)$, $L_G(\mu) = L_{\overline{G}}(\gamma)$ and $t_G(\mu) = w$. Hence $B \neq \emptyset$, and by definition of $G''$ there is a path $\delta$ in $G''$ such that $s_{G''}(\delta) = B$ and $L_{G''}(\delta) = L_{\overline{G}}(\gamma)$. As such it is unique because $G''$ is right-resolving. The maximality of $B$ follows because $s_{G''}(\nu) \subseteq B$ for any path $\nu$ in $G''$ for which $s_{G''}(\nu) \subseteq s_{\overline{G}}(\gamma)$ and $L_{G''}(\nu) = L_{\overline{G}}(\gamma)$. The fact that $t_{G''}(\delta) \supseteq t_{\overline{G}}(\gamma)$ follows from the observation made in the first sentence of the proof. The reverse inclusion, $t_{G''}(\delta) \subseteq t_{\overline{G}}(\gamma)$, follows from the definition of $\overline{G}$ because $B \subseteq s_{\overline{G}}(\gamma)$.  

(4): Let $j \in [m,n-1]$. Since $t_{G''}(\delta) = t_{\overline{G}}(\gamma)$, for every $w \in t_{\overline{G}}(\gamma)$ there is a path $\nu$ in $G$ such that $s_G(\nu) \in t_{G''}(\delta_j)$, $L_G(\nu) = L_{\overline{G}}(\gamma)_{[j+1,n]}$ and $t_G(\nu) = w$. Since $(G,L_G)$ is right-resolving this implies that $\# t_{G''}(\delta_j) \geq \# t_{\overline{G}}(\gamma)$. Since $t_{G''}(\delta_i)  \subseteq t_{\overline{G}}(\gamma_i)$ for all $i$, and $\# t_{\overline{G}}(\gamma_i) = \# t_{\overline{G}}(\gamma)$ when $i \in [m,n]$, we have that $\# t_{G''}(\delta_j) \leq \# t_{\overline{G}}(\gamma)$, and hence also that $\# t_{G''}(\delta_j) = \# t_{\overline{G}}(\gamma)$ and $t_{G''}(\delta_j)  = t_{\overline{G}}(\gamma_j)$ for all $j \in [m,n]$.

 
\end{proof}
The path $\delta$ in Lemma \ref{25-03-26b} will be referred to as \emph{the maximal $G''$-path dominated by $\gamma$.}

Given a path $\eta = (\eta_k)_{k \in [i,j]}$ in $\overline{G}$ and an interval $I \subseteq [i,j]$ we say that the restriction  $\eta_I$ of $\eta$ to $I$ is \emph{homogeneous} in $\eta$ when $\# s_{\overline{G}}(\eta_k) = \# t_{\overline{G}}(\eta_k)$ for all $k \in I$. When $C$ is a component in $\overline{G}$  the number $\# F$ is the same for every vertex $F \in V_C$ and we denote this common value by $M(C)$. In particular, any path $\eta =(\eta_k)_{k \in [i,j]}$ in $C$ will be homogeneous and $\# s_{\overline{G}}(\eta_k)=\# t_{\overline{G}}(\eta_k) = M(C)$ for all $k \in [i,j]$. 

\begin{lemma}\label{13-02-26cx} Let $\gamma = (\gamma_k)_{k\in [1,n]}$ be a finite path in ${\overline{G}}$ and let $\delta$ be the maximal $G''$-path dominated by $\gamma$. Assume that $\gamma_{[1,a-1]}$ and $\gamma_{[b,n]}$ are homogeneous in $\gamma$ and that $1<a < b<n$. Then $\delta_{[a,b-1]}$ is the maximal $G''$-path dominated by $\gamma_{[a,b-1]}$.   
\end{lemma}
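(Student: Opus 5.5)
The plan is to identify the start vertex $s_{G''}(\delta_a)$ explicitly and show that it equals the set
$$
B' := \left\{ v \in s_{\overline{G}}(\gamma_a): \ v = s_G(\mu) \ \text{for a finite path $\mu$ in $G$ with } L_G(\mu) = L_{\overline{G}}(\gamma_{[a,b-1]}) \right\}.
$$
By Lemma \ref{25-03-26b} (1)+(2), $B'$ is the start vertex of the maximal $G''$-path dominated by $\gamma_{[a,b-1]}$. Now $\delta_{[a,b-1]}$ is a path in $G''$ with the same labels as $\gamma_{[a,b-1]}$. So once $s_{G''}(\delta_a) = B'$ is known, the uniqueness in Lemma \ref{25-03-26b} (2) (right-resolvingness of $G''$) gives the conclusion.

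\emph{Step 1: homogeneous edges act as bijections.} Let $\eta$ be an edge of $\overline{G}$ with $\# s_{\overline{G}}(\eta) = \# t_{\overline{G}}(\eta)$ and label $c$. Consider the partial map $s_{\overline{G}}(\eta) \to t_{\overline{G}}(\eta)$ sending $v$ to the terminal vertex of the (unique, since $(G,L_G)$ is right-resolving) edge from $v$ labeled $c$. By definition of $[D,c]$ this map is onto $t_{\overline{G}}(\eta)$. A partial map from a set onto a set of the same cardinality must be everywhere defined and bijective. Hence every vertex of $s_{\overline{G}}(\eta)$ emits an edge labeled $c$, and distinct vertices go to distinct vertices. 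Composing these maps, a homogeneous stretch $\gamma_I$ yields a bijection from $s_{\overline{G}}(\gamma_{\min I})$ onto $t_{\overline{G}}(\gamma_{\max I})$ given by following the labels. Moreover, every vertex of $s_{\overline{G}}(\gamma_{\min I})$ starts a $G$-path with label $L_{\overline{G}}(\gamma_I)$.

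\emph{Step 2: characterize $B$.} Let $\beta : s_{\overline{G}}(\gamma_1) \to s_{\overline{G}}(\gamma_a)$ be the bijection from Step 1 applied to $\gamma_{[1,a-1]}$; this is where $a > 1$ is used. I claim that $v \in B$ if and only if $\beta(v) \in B'$.

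For the forward direction, the $G$-path from $v$ labeled $L_{\overline{G}}(\gamma)$ passes through $\beta(v)$ at time $a$, by right-resolvingness. Its segment on $[a,b-1]$ witnesses $\beta(v) \in B'$.

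For the reverse direction, a $G$-path from $\beta(v)$ labeled $L_{\overline{G}}(\gamma_{[a,b-1]})$ starts in $s_{\overline{G}}(\gamma_a)$. By the definition of $[D,c]$ it therefore stays inside the vertex sets of $\gamma$ and ends in $t_{\overline{G}}(\gamma_{b-1}) = s_{\overline{G}}(\gamma_b)$. By Step 1 applied to the homogeneous stretch $\gamma_{[b,n]}$ (nonempty because $b<n$), this path extends to the end with label $L_{\overline{G}}(\gamma_{[b,n]})$. Prepending the path from $v$ to $\beta(v)$ gives a $G$-path from $v$ labeled $L_{\overline{G}}(\gamma)$, so $v \in B$.

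Hence $\beta(B) = B'$, using that $\beta$ is a bijection onto $s_{\overline{G}}(\gamma_a)$.

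\emph{Step 3: compute $s_{G''}(\delta_a)$.} By the definition of edges in $G''$, $t_{G''}(\delta_k)$ is the set of targets of the label-edges leaving $s_{G''}(\delta_k)$. So inductively $s_{G''}(\delta_a) = t_{G''}(\delta_{a-1}) = \beta(B) = B'$, and Step 2 together with uniqueness finishes the proof.

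The main obstacle is Step 1: one must really extract from the bare cardinality condition $\#s = \#t$ that \emph{every} vertex of the start set emits the label, not just that the image is a bijection on its domain. The counting argument with right-resolvingness handles this. Everything else is bookkeeping with Lemma \ref{25-03-26b}.
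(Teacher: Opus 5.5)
Your proof is correct and follows essentially the paper's route. The paper likewise uses homogeneity of $\gamma_{[1,a-1]}$ to get a $G''$-path $\nu$ from $B$ to $B'$, and then applies the uniqueness in Lemma \ref{25-03-26b}(2) to the concatenation $\nu\delta'\gamma_{[b,n]}$, whereas you apply it to the middle segment starting at $s_{G''}(\delta_a)=B'$. Your Step 2 actually supplies the verification of $t_{G''}(\nu)=B'$ that the paper only asserts, including the use of homogeneity of $\gamma_{[b,n]}$ for the inclusion $B'\subseteq\beta(B)$.
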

\begin{proof} Let $\delta'$ denote the maximal $G''$-path dominated by $\gamma_{[a,b-1]}$. Let
$$
B := \left\{ v \in s_{\overline{G}}(\gamma): \ v=s_G(\mu)  \ \ \text{for a finite path $\mu$ in $G$ with $L_G(\mu) = L_{\overline{G}}(\gamma)$} \right\}
$$
and
$$
B' := \left\{ v \in s_{\overline{G}}(\gamma_{[a,b-1]}): \ v=s_G(\mu)  \ \ \text{for a finite path $\mu$ in $G$ with $L_G(\mu) = L_{\overline{G}}(\gamma_{[a,b-1]})$} \right\}.
$$
Since $\gamma_{[1,a-1]}$ is homogeneous there is a path $\nu$ in $G''$ with $s_{G''}(\nu) = B$, $t_{G''}(\nu) = B'$ and $L_{G''}(\nu) = L_{\overline{G}}(\gamma_{[1,a-1]})$. In addition, it follows from (3) and (4) of Lemma \ref{25-03-26b} that $\nu':= \gamma_{[b,n]}$ is a path in $G''$ with $s_{G''}(\nu') = t_{G''}(\delta') = t_{\overline{G}}(\gamma_{[1,b-1]})$, $t_{G''}(\nu') = t_{\overline{G}}(\gamma)$ and $L_{G''}(\nu') = L_{\overline{G}}(\gamma_{[b,n]})$. It follows then from (2) of Lemma \ref{25-03-26b} that $\nu \delta' \nu'$ is the maximal $G''$-path dominated by $\gamma$, i.e. $\nu \delta' \nu' = \delta$. Hence $\delta' = \delta_{[a,b-1]}$. 
\end{proof}
 Recall that when $\lambda : X \to X' \subseteq A^{\mathbb Z}$ is a continuous shift-commuting map between subshifts $X$ and $X'$, there is an $n \in \mathbb N$ and a map $\mathbb W_{2n+1}(X) \to A$, which we also denote by $\lambda$, such that
$$
\lambda(x)_i = \lambda(x_{[i-n,i+n]})
$$
for all $i \in \mathbb Z$. This is why such a map $\lambda$ is called a sliding block code. The number $n$ will here be called a \emph{window size} of $\lambda$. Note that any $m \geq n$ is then also a window size of $\lambda$. When a window size $n$ has been fixed and $w= (w_k)_{k \in [i,j]} \in \mathbb W_{j-i+1}(X)$ for some $j-i> 2n$, we define $\lambda(w) := (\lambda(w)_k)_{k  \in [i+n,j-n]}$ such that
$$
\lambda(w)_k = \lambda(w_{[k-n,k+n]})
$$
for $k \in [i+n,j-n]$.

Let $\eta = (\eta_k)_{k \in [i,j]}$ be a finite path in $G''$. There is then a collection $N(\eta)$ of finite paths $f = (f_k)_{k \in [i,j]}$ in $G$ such that $L_{G''}(\eta) = L_G(f)$ for all $f \in N(\eta)$ and
$$
N(\eta_k) = \left\{f_k : \ f \in N(\eta)\right\} 
$$ 
for all $k \in [i,j]$. Conversely, given a finite collection $N$ of paths $f = (f_k)_{k \in [i,j]}$ in $G$ such that $L_G(f) = L_G(f')$ for all $f,f' \in N$, there is a path $\eta = (\eta_k)_{k \in [i,j]}$ in $G''$ such that $N(\eta) = N$. With this identification in mind we make sense of the inclusion $\eta \subseteq \eta'$ when $\eta$ and $\eta'$ are paths in $G''$ over the same interval in $\mathbb Z$.

We fix now a natural number $\kappa \in \mathbb N$ which is a window size for $\phi,\phi^{-1},\psi$ and $\psi^{-1}$. When $j -i > 2\kappa$ the paths $\phi(f), f \in N(\eta),$ in $G$ are defined and we define the path $\phi''(\eta)$ in $H''$ such that 
$$
N(\phi''(\eta)) = \left\{\phi(f): \ f \in N(\eta) \right\}.
$$
Note that this construction is consistent with how $\phi''$ is defined on $X_{G''}$. That is, when $x \in X_{G''}$,
$$
\phi''(x)_{[i+\kappa,j-\kappa]} = \phi''(x_{[i,j]})
$$ 
when $i,j \in \mathbb Z, \ j-i > 2\kappa$.
Let $x \in X_{\overline{G}}$. An interval $I \subseteq \mathbb Z$ will be called \emph{a component interval in $x$} when there is a component $C$ in $\overline{G}$ such that $x_i \in E_C$ for all $i \in I$. Recall that such an interval is also homogeneous in $x$. Since a component in $\overline{G}$ is a component also in $G''$ it follows that $x_I$ is a path in $G''$ when $I$ is a component interval in $x$.

\begin{lemma}\label{15-02-26cx} Let $x \in X_{\overline{G}}$, and let $I:= [i,j]$ and $J:=[k,l]$ be component intervals in $x$. Assume that $j-i > 4\kappa$, $l-k > 2\kappa$ and $j < k$. There is a unique path $q\in X_{\overline{H}}[i+\kappa,l-\kappa]$ such that 
\begin{itemize}
\item[a)] $q_{[i+\kappa,j-\kappa]} = \phi''(x_{[i,j]})$, 
\item[b)] $q_{[k+\kappa,l-\kappa]} =\phi''(x_{[k,l]})$, and 
\item[c)] $L_{\overline{H}}(q) = \psi(L_{\overline{G}}(x))_{[i+\kappa,l-\kappa]}$.
\end{itemize}
\end{lemma}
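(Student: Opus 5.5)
Since $\overline{H}$ is right-resolving, uniqueness is automatic once a) and c) hold. A path in $\overline{H}$ is determined by its initial vertex and its label. Any $q$ satisfying a) has initial edge $\phi''(x_{[i,j]})_{i+\kappa}$, and c) fixes the label. So the whole task is existence, and the plan is to build $q$ forward from the $\phi''$-image of the first component block and then show that it lands on the $\phi''$-image of the second block.

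\emph{Existence of a candidate.} Set $w := \psi(L_{\overline{G}}(x))_{[i+\kappa,l-\kappa]}$. Since $\psi(L_{\overline{G}}(x)) \in Z = L_{\overline{H}}(X_{\overline{H}})$, the word $w$ is realised by paths in $H$, so it can be followed in $\overline{H}$ from any vertex containing a suitable starting vertex. Let $F := s_{\overline{H}}(\phi''(x_{[i,j]})_{i+\kappa})$ and let $q$ be the path in $\overline{H}$ with $s_{\overline{H}}(q)=F$ and label $w$, as long as it exists. Along $[i+\kappa,j-\kappa]$ it coincides with $\phi''(x_{[i,j]})$, since that is a path in $H''\subseteq\overline{H}$ with the right label by the commutativity in Lemma \ref{12-02-26bx}, and right-resolvingness applies. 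After that, $q$ exists because $t(q_{j-\kappa}) \supseteq \{t_H(\phi(f)_{j-\kappa})\}$ for the paths $f$ in $G$ through $x$. Indeed, extend $x$ to a ray $x'\in X_{\overline{G}}$. Each vertex of $t_{\overline{G}}(x_{j})$ has a continuation in $G$ with label $L(x_{[j+1,\infty)})$, and $\phi$ of such continuations gives $H$-paths with label $w$. So a) and c) hold, and it remains to prove b).

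\emph{The main step: b).} Apply the same reasoning on the $G$ side. Let $\gamma := x_{[i,l]}$ and let $\delta$ be the maximal $G''$-path dominated by $\gamma$ (Lemma \ref{25-03-26b}). Because $x_{[i,j]}$ and $x_{[k,l]}$ lie in components, they are homogeneous. Lemma \ref{25-03-26b}(3),(4) together with Lemma \ref{13-02-26cx} then give $\delta_{[i,j]} \subseteq x_{[i,j]}$ and $\delta_{[k,l]} = x_{[k,l]}$. The first is an inclusion of $G''$-paths inside a component with equal cardinalities, hence an equality. So $\delta$ is a $G''$-path agreeing with $x$ on both blocks. Now $\phi''(\delta)$ is defined on $[i+\kappa,l-\kappa]$, lies in $H''\subseteq \overline{H}$, has label $w$, and equals $\phi''(x_{[i,j]})$ on $[i+\kappa,j-\kappa]$ and $\phi''(x_{[k,l]})$ on $[k+\kappa,l-\kappa]$. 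Since it starts at $F$, right-resolvingness of $\overline{H}$ forces $q=\phi''(\delta)$, which gives b). This also makes the first step redundant, because $q:=\phi''(\delta)$ directly satisfies a)–c).

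\emph{Expected obstacle.} The delicate point is $\delta_{[i,j]}=x_{[i,j]}$. One has to make sure that every vertex in $s(x_{i})$ survives into $B$, i.e. that every $G$-vertex in the first block extends along the whole label to time $l$. This should follow from homogeneity: in a component, every $G$-path through $x_{[i,j]}$ reaches all $M(C)$ endpoints, and by right-resolvingness of $G$ distinct starts give distinct ends. In addition, $t_{\overline{G}}(x_m)$ for $m\ge j$ is reached from all of $t_{\overline{G}}(x_j)$ only if no vertex dies. If this fails, I would instead take the maximal dominated path of $x_{[i',l]}$ for $i'$ far to the left inside $[i,j]$, which is where $j-i>4\kappa$ is used, and argue with Lemma \ref{13-02-26cx} on the subinterval $[i+2\kappa, l]$. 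I would then reconcile the two $\phi''$-images on $[i+\kappa,j-\kappa]$ by right-resolvingness in the backward direction, using $\phi^{-1}$ with window $\kappa$.
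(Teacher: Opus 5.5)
\textbf{The gap.} Your step $\delta_{[i,j]}=x_{[i,j]}$ is false in general. Homogeneity of $x_{[i,j]}$ only says that the tracks of $x_{[i,j]}$ (the paths in $N(x_{[i,j]})$) pass bijectively through the block $[i,j]$. It says nothing about whether they survive the transient stretch $x_{[j+1,k-1]}$, and some of them can die there. Moreover $\delta_{[i,j]}$ need not lie in the component of $x_{[i,j]}$, so no cardinality argument applies.

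Concretely, take Example \ref{08-01-25a}. Let $x_{[i,j]}$ run in the component $\{a,c\}\leftrightarrow\{b,c\}$ of $\overline{G}$ with $t_{\overline{G}}(x_j)=\{a,c\}$. Let $x_{j+1}$ be the edge $\{a,c\}\overset{2}{\to}\{a\}$, and let $x_{[k,l]}$, with $k=j+2$, repeat the loop $\{a\}\overset{2}{\to}\{a\}$. The track through $c$ has no continuation with label $2$. So every vertex of $\delta_{[i,j]}$ is a singleton, while the vertices of $x_{[i,j]}$ have two elements.

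Distinct tracks of the homogeneous path $x_{[i,j]}$ differ at every time. Hence their $\phi$-images differ on $[i+\kappa,j-\kappa]$ (apply $\phi^{-1}$ and use $j-i>4\kappa$), and by right-resolvingness of $H$ they start at distinct vertices. It follows that $\phi''(\delta)$ starts at a proper subset of $F=s_{\overline{H}}(\phi''(x_{[i,j]})_{i+\kappa})$. So $\phi''(\delta)$ violates a), and $q\neq\phi''(\delta)$. This is exactly why the paper only records $\phi''(\hat{x})_{[i+\kappa,j-\kappa]}\subseteq\phi''(x_{[i,j]})$.

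Your first step has the same flaw (``each vertex of $t_{\overline{G}}(x_j)$ has a continuation''). There, however, ``some vertex'' suffices, so uniqueness and the existence of $q$ satisfying a) and c) are fine.

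\textbf{What is missing.} You need to show that the extra tracks of $q$ die before time $k+\kappa-1$. These are the tracks starting in $F$ but not in $s_{H''}(\phi''(\hat{x})_{i+\kappa})$. Once they are gone, $t_{\overline{H}}(q_{k+\kappa-1})=t_{H''}(\phi''(\hat{x})_{k+\kappa-1})=s_{\overline{H}}(\phi''(x_{[k,l]})_{k+\kappa})$. The paper gets this by a sandwich argument:
\begin{enumerate}
\item Let $\eta$ be the $H''$-path formed by all $H$-paths starting in $F$ with label $\psi(L_{\overline{G}}(x))_{[i+\kappa,k+\kappa-1]}$. Then $t_{\overline{H}}(q_{k+\kappa-1})=t_{H''}(\eta)$, and $\phi''(\hat{x})\subseteq\eta\subseteq\phi''(x_{[i,j]})$ on $[i+\kappa,j-\kappa]$.
\item Pulling back with ${\phi''}^{-1}$ gives $\hat{x}\subseteq{\phi''}^{-1}(\eta)\subseteq x$ on $[i+2\kappa,j-2\kappa]$. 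This is where $j-i>4\kappa$ is really used.
\item ${\phi''}^{-1}(\eta)$ carries the label of $x$ up to time $k-1$. Maximality of $\hat{x}_{[i+2\kappa,k-1]}$ (Lemma \ref{13-02-26cx}) then forces $\hat{x}={\phi''}^{-1}(\eta)$ there.
\item Pushing forward and using right-resolvingness yields $\eta=\phi''(\hat{x})$ near $k+\kappa-1$.
\end{enumerate}

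Equivalently, track by track: an $H$-path $g$ from $F$ with this label agrees with $\phi(f)$ on $[i+\kappa,j-\kappa]$ for some track $f$ of $x_{[i,j]}$. Applying $\phi^{-1}$ to an extension of $g$ shows that $f$ continues with label $L_{\overline{G}}(x)$ up to time $k-1$, hence through the homogeneous block $x_{[k,l]}$. So the continuation of $f$ is a track of $\hat{x}$, and by right-resolvingness of $H$, $g$ is the restriction of a track of $\phi''(\hat{x})$.

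Your fallback points toward $\phi^{-1}$ and Lemma \ref{13-02-26cx} on $[i+2\kappa,\cdot\,]$, but it is not carried out. Its proposed mechanism, ``right-resolvingness in the backward direction'', is not available, since $\overline{H}$ is not left-resolving.
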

\begin{proof} The uniqueness is obvious since $\overline{H}$ is right-resolving, and hence the existence is the only issue. Let $\hat{x}$ be the maximal $G''$-path dominated by $x_{[i,l]}$. Then $\phi''(\hat{x})_{[i+\kappa,j-\kappa]} \subseteq \phi''(x_{[i,j]})$ while 
\begin{equation}\label{17-02-26g}
\phi''(\hat{x})_{[k+\kappa,l-\kappa]} = \phi''(x_{[k,l]})
\end{equation} 
since $\hat{x}_{[k,l]} = x_{[k,l]}$ by (4) of Lemma \ref{25-03-26b}. It is tempting to deduce from Lemma \ref{12-02-26bx} that 
\begin{equation}\label{17-02-26dx}
L_{H''}(\phi''(\hat{x})) =\psi(L_{G''}(\hat{x}))_{[i+\kappa,l-\kappa]} = \psi(L_{\overline{G}}(x))_{[i+\kappa,l-\kappa]},
\end{equation}
and the conclusion is correct. But there is a slight problem here because $G''$ can contain sources as well as sinks and hence the path $\hat{x}$ may not sit in an element of $X_{G''}$, making Lemma \ref{12-02-26bx} useless. The way out goes as follows: Choose one of the paths $f' \in N(\hat{x})$ and note that since $G$ does not contain sources or sinks there is an element $f \in X_G$ such that $f' = f_{[i,l]}$. We can therefore use \eqref{09-10-25axy} in Lemma \ref{12-02-26bx} to get 
\begin{align*}
& L_{H''}(\phi''(\hat{x})) = L_H(\phi(f')) \\
&= L_H(\phi(f))_{[i+\kappa,l-\kappa]} = \psi(L_G(f))_{[i+\kappa,l-\kappa]} \\
& =\psi(L_G(f)_{[i,l]}) = \psi(L_G(f')_{[i,l]}) =  \psi(L_{G''}(\hat{x})_{[i,l]})
\\ 
&= \psi(L_{G''}(\hat{x}))_{[i+\kappa,l-\kappa]} =\psi(L_{\overline{G}}(x))_{[i+\kappa,l-\kappa]}.
\end{align*}
This establishes \eqref{17-02-26dx}.

Let $N$ be the collection of paths $f = (f_r)_{r \in [i+\kappa,k+\kappa-1]}$ in $H$ for which $s_{H}(f) \in s_{\overline{H}}(\phi''(x_{[i,j]})_{i + \kappa})$ and $L_H(f) = \psi(L_{\overline{G}}(x))_{[i+\kappa,k+\kappa-1]}$. Let
$\eta = (\eta_r)_{r\in [i+\kappa,k+\kappa-1]}$ be the corresponding path in $H''$; i.e. $N(\eta) =\left\{N(f) : \ f \in N\right\}$.
 Then
\begin{equation}\label{17-02-26e}
\phi''(\hat{x})_{[i+\kappa,j-\kappa]} \subseteq \eta_{[i+\kappa,j-\kappa]} \subseteq \phi''(x_{[i,j]}) .
\end{equation}
In particular, \eqref{17-02-26dx} and the first inclusion in \eqref{17-02-26e} imply that $N \neq \emptyset$. By definition of $\overline{H}$ this means that there is a path $q'$ in $X_{\overline{H}}[i+\kappa,k+\kappa -1]$ with $s_{\overline{H}}(q') = s_{\overline{H}}( \phi''(x_{[i,j]})_{i + \kappa})$ and label $L_{\overline{H}}(q')_{[i+\kappa,k+\kappa-1]} = \psi(L_{\overline{G}}(x))_{[i+\kappa,k+\kappa-1]}$. We note that
$$
q'_{[i+\kappa,j-\kappa]} = \phi''(x_{[i,j]})
$$
and
\begin{equation}\label{17-02-26f}
t_{\overline{H}}(q') = t_{H''}(\eta) .
\end{equation}
It follows from \eqref{17-02-26e} that 
\begin{equation}\label{17-02-26a}
\hat{x}_{[i+2\kappa,j-2\kappa]} \subseteq {\phi''}^{-1}(\eta)_{[i+2\kappa,j-2\kappa]}) \subseteq x_{[i+2\kappa,j-2\kappa]}. 
\end{equation}
Arguing as above, using again \eqref{09-10-25axy} instead of \eqref{17-11-25cx} in Lemma \ref{12-02-26bx}, we find that 
\begin{equation}\label{17-02-26b}
L_{G''}({\phi''}^{-1}(\eta))_{[i+2\kappa,k-1]} = \psi^{-1}(L_{H''}(\eta))_{[i+2\kappa,k-1]} = L_{\overline{G}}(x)_{[i+2\kappa,k-1]}.
\end{equation}
By Lemma \ref{13-02-26cx} $\hat{x}_{[i+2\kappa, k-1]}$ is the maximal $G''$-path dominated by $x_{[i+2\kappa,k-1]}$ and we conclude therefore from \eqref{17-02-26a} and \eqref{17-02-26b} that
$$
\hat{x}_{[i+2\kappa,k-1]} ={\phi''}^{-1}(\eta)_{[i+2\kappa,k-1]}.
$$
This implies that 
$$
\phi''(\hat{x})_{[i+3\kappa, k-\kappa-1]} = \eta_{[i+3\kappa,k-\kappa-1]},
$$
and since $\phi''(\hat{x})_{[i+3\kappa, k+\kappa-1]}$ and $\eta_{[i+3\kappa,k+\kappa-1]}$ have the same label, also that
$$
\phi''(\hat{x})_{[i+3\kappa, k+\kappa-1]} = \eta_{[i+3\kappa,k+\kappa-1]}.
$$
In particular, we can combine with \eqref{17-02-26f} and \eqref{17-02-26g} to conclude that 
$$
t_{\overline{H}}(q') = t_{H''}(\eta_{k+\kappa-1}) = t_{H''}(\phi''(\hat{x})_{k+\kappa-1}) = s_{\overline{H}}(\phi''(x_{[k,l]})_{k+\kappa}).
$$
The composition $q'\phi''(x_{[k,l]})$ is then a path in $\overline{H}$ with the stated properties.
\end{proof}

\emph{Proof of Theorem \ref{25-03-26}:} Let $x \in X_{\overline{G}}$. If $x \in \overline{\Per (X_{\overline{G}})}$, $x \in X_{G''}$ and we set $\overline{\phi}(x) := \phi''(x)$. Assume $x \notin \overline{\Per (X_{\overline{G}})}$. Let then $a_1 < a_2< \cdots < a_{2N}$ be the endpoints in $\mathbb Z$ of component intervals in $x$ of length exceeding $8\kappa$. Thus $x_{a_i}$ is in a component of $\overline{G}$ for all $i$ while $x_{a_i+1}$ is not when $i$ is odd and $x_{a_i-1}$ is not when $i$ is even. To define $\overline{\phi}(x)_i$ when 
$$
i \in [a_j-\kappa,a_{j+1}+\kappa], \ j = 1,3,5, \cdots ,2N-1 ,
$$
we note that $[a_j-7\kappa,a_j]$ and $[a_{j+1}, a_{j+1}+7\kappa]$ are component intervals in $x$ for which Lemma \ref{15-02-26cx} applies to give us a path $q \in X_{\overline{H}}[a_j-6\kappa,a_{j+1}+6\kappa]$ such that 
\begin{itemize}
\item[a)] $q_{[a_j-6\kappa,a_j-\kappa ]} = \phi''(x_{[a_j-7\kappa,a_j]})$, 
\item[b)] $q_{[a_{j+1}+\kappa,a_{j+1}+6\kappa]} = \phi''(x_{[a_{j+1}, a_{j+1}+7\kappa]})$, and \item[c)] $L_{\overline{H}}(q) = \psi(L_{\overline{G}}(x))_{[a_j-6\kappa,a_{j+1}+6\kappa]}$.
\end{itemize}
We set  
$$
\overline{\phi}(x)_i := q_i
$$ 
when $i \in \bigcup_{j=1}^N [a_{2j-1} -\kappa, a_{2j}+ \kappa]$. For each
$i \notin \bigcup_{j=1}^{N} [a_{2j-1}-\kappa,a_{2j}+\kappa]$,
the path $x_{[i-\kappa,i+\kappa]}$ is in a component of $\overline{G}$ and we set
$$
\overline{\phi}(x)_i := \phi''(x_{[i-\kappa,i+\kappa]}).
$$

Then $\overline{\phi}(x) \in X_{\overline{H}}$ and we have therefore defined a map $\overline{\phi} : X_{\overline{G}} \to  X_{\overline{H}}$. It is easy to see that $\overline{\phi}$ commutes with the shift, that $\overline{\phi}$ extends $\phi''|_{\overline{\Per (X_{\overline{G}}})}$ and that $L_{\overline{H}} \circ \overline{\phi} = \psi \circ L_{\overline{G}}$. To see that $\overline{\phi}$ is continuous and hence a sliding block code, note that since $\overline{G}$ is a finite directed graph there is an $D \in \mathbb N$ such that any path in $\overline{G}$ of length $D$ contains a path in a component of $\overline{G}$ of length exceeding $8\kappa$. It follows that $\overline{\phi}(x)_i$ is determined by $x_{[i-D,i+D]}$.

To prove that $\overline{\phi}$ extends $\phi''$, let $x \in X_{G''}$. There is an $N \in \mathbb Z$ and a component $C$ in $G''$ such that $x_i \in E_C$ for all $i \leq N$. Since $C$ is also a component in $\overline{G}$, it follows from the definition of $\overline{\phi}$ that $\overline{\phi}(x)_i = \phi''(x)_i$ for $i < N-\kappa$. Since 
 $$
  L_{\overline{H}}(\phi''(x)) =L_{H''}(\phi''(x)) =\psi( L_{G''}(x)) = \psi( L_{\overline{G}}(x)) = L_{\overline{H}}(\overline{\phi}(x))
 $$
 it follows that $\phi''(x) =\overline{\phi}(x)$ because $(\overline{H},L_{\overline{H}})$ is right-resolving.
 
To show that $\overline{\phi}$ is the unique sliding block code which extends $\phi''$ and makes the diagram in (2) of Theorem \ref{25-03-26}commute, assume that $\lambda :  X_{\overline{G}} \to X_{\overline{H}}$ is a sliding block code with these two properties. Let $x \in X_{\overline{G}}$. Since $x$ is backward asymptotic to an element of $\overline{\Per (X_{\overline{G}})} \subseteq X_{G''}$ it follows that there is a $K \in \mathbb Z$ such $\lambda(x)_j = \overline{\phi}(x)_j$ for $j \leq K$. Thanks to the commutativity of the diagram (2) and because $(\overline{H},L_{\overline{H}})$ is right-resolving, it follows that $\lambda(x) = \overline{\phi}(x)$. 
 
 It remains only to show that $\overline{\phi}$ is a conjugacy; not merely a sliding block code. For this note that since $\psi$ and $\phi$ are conjugacies there is also a commuting diagram
 \begin{equation*}
\xymatrix{
 X_H\ar[r]^-{\phi^{-1}} \ar[d]_{L_H}& X_{G} \ar[d]^-{L_G}\\
   Z \ar[r]^{\psi^{-1}} & Y}
\end{equation*}
 We get therefore also a sliding block code $\overline{\phi^{-1}} : X_{\overline{H}} \to X_{\overline{G}}$ with properties symmetric to those of $\overline{\phi}$. The composition $\overline{\phi^{-1}} \circ \overline{\phi} : X_{\overline{G}} \to X_{\overline{G}}$ must be the identity by the uniqueness properties we have established, applied with $\psi$ the identity map on $Y$ and $\phi$ the identity map on $X_G$. Thus $\overline{\phi^{-1}} \circ \overline{\phi} = \id_{X_{\overline{G}}}$ and by symmetry  $\overline{\phi} \circ \overline{\phi^{-1}} = \id_{X_{\overline{H}}}$.   
\qed

\section{Strongly canonical covers of the future cover}

\subsection{The main result}


\begin{thm}\label{18-01-26d} Let $(G,L_G)$ and $(H,L_H)$ be right-resolving presentations of the sofic subshifts $Y$ and $Z$, respectively. Assume that $\psi : Y \to Z$ and $\phi : X_G \to X_H$ are conjugacies such that
\begin{equation}\label{09-10-25axx}
\xymatrix{
 X_G\ar[r]^-{\phi} \ar[d]_{L_G}& X_{H} \ar[d]^-{L_H}\\
   Y \ar[r]^{\psi} & Z}
\end{equation}
commutes. There is a unique conjugacy $\underline{\phi} : X_{\underline{G}} \to X_{\underline{H}}$ such that
\begin{equation}\label{18-01-26e}
\xymatrix{
 X_{\underline{G}} \ar[r]^-{\underline{\phi}} \ar[d]_{L_{\underline{G}}}& X_{\underline{H}} \ar[d]^-{L_{\underline{H}}}\\
   Y \ar[r]_{\psi} & Z}
\end{equation}
commutes. This conjugacy has the properties that also
\begin{equation}\label{20-01-25d}
\xymatrix{\\
X_{\underline{G}} \ar[r]^-{\underline{\phi}} \ar[d]_-{f_{\underline{G}}} & X_{\underline{H}}  \ar[d]^-{f_{\underline{H}}}\\
 X_{\mathbb K(Y)} \ar[d]_-{L_{\mathbb K(Y)}} \ar[r]^-{\psi_{\mathbb K}} &  X_{\mathbb K(Z)} \ar[d]^-{L_{\mathbb K(Z)}} \\
Y \ar[r]_-{\psi}  & Z }
\end{equation}
and
\begin{equation}\label{09-02-26}
\xymatrix{
 X_{\underline{G}} \ar[r]^-{\underline{\phi}} & X_{\underline{H}} \\
   Y \ar[u]^{\alpha_G}\ar[r]_{\psi} & Z \ar[u]_{\alpha_H}}
\end{equation}
commute.
\end{thm}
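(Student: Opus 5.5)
The plan is to obtain $\underline{\phi}$ by restricting the conjugacy $\overline{\phi} : X_{\overline{G}} \to X_{\overline{H}}$ of Theorem \ref{25-03-26}. Since $\underline{G}$ is a hereditary subgraph of $\overline{G}$, a ray $x\in X_{\overline{G}}$ lies in $X_{\underline{G}}$ exactly when $s_{\overline{G}}(x_k)\in V_{\underline{G}}$ for all sufficiently negative $k$. So the main task is to show that $\overline{\phi}(X_{\underline{G}})\subseteq X_{\underline{H}}$, and symmetrically $\overline{\phi^{-1}}(X_{\underline{H}})\subseteq X_{\underline{G}}$. Then $\underline{\phi}:=\overline{\phi}|_{X_{\underline{G}}}$ is a conjugacy, and \eqref{18-01-26e} is inherited from (2) of Theorem \ref{25-03-26}.

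The first step is an intrinsic description of the vertices $D^y$. Let $x\in X_{\overline{G}}$ be a ray with $L_{\overline{G}}(x)=y$. Every element of $t_{\overline{G}}(x_{k-1})=[s_{\overline{G}}(x_{k-1}),y_{k-1}]$ has a predecessor in $s_{\overline{G}}(x_{k-1})$. Iterating this backwards and using compactness, every element of $s_{\overline{G}}(x_k)$ is the terminal vertex of a left-infinite path in $G$ labeled $y_{(-\infty,k-1]}$. Hence $s_{\overline{G}}(x_k)\subseteq D^{\sigma^k(y)}$ for all $k$. It follows that $\alpha_G(y)$ is the unique ray over $y$ in $X_{\overline{G}}$ that is vertexwise maximal for inclusion. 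The second step is to prove \eqref{09-02-26}, i.e. $\overline{\phi}(\alpha_G(y))=\alpha_H(\psi(y))$.

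For this I would look at a left tail of $\alpha_G(y)$ lying in a component $C$ of $\overline{G}$. There $\alpha_G(y)$ is a path in $G''$, and $\overline{\phi}$ acts as $\phi''$, i.e. by $N\mapsto \phi(N)$. Homogeneity in $C$ together with right-resolvingness of $G$ should let me identify $N(\alpha_G(y)_k)$, for $k$ deep in the tail, with the set of all $k$-th edges of left-infinite $G$-paths labeled $y$ that lie in this tail. The map $\phi$ is a bijection between rays over $y$ and rays over $\psi(y)$, by \eqref{09-10-25axx}. Using this, $\phi''$ sends this maximal family to a family over $\psi(y)$ whose vertex sets are contained in $D^{\sigma^k(\psi(y))}$. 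Running the same argument with $\phi^{-1},\psi^{-1}$ gives the reverse inclusion, so the vertices agree with those of $\alpha_H(\psi(y))$ far to the left. Right-resolvingness of $\overline{H}$ then gives equality everywhere. I expect this identification on the tail to be the main obstacle, because the $D$-sets are defined through left-infinite paths which need not extend forward along $y$.

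With \eqref{09-02-26} established, a general $x\in X_{\underline{G}}$ has vertices contained in those of $\alpha_G(L_{\underline{G}}(x))$ and lying in $V_{\underline{G}}$. I would try to show, by the same monotonicity of $N\mapsto\phi(N)$ on component tails, that $\overline{\phi}(x)$ has its far-past vertices in $V_{\underline{H}}$. This gives $\overline{\phi}(X_{\underline{G}})\subseteq X_{\underline{H}}$. For diagram \eqref{20-01-25d}, Lemma \ref{27-01-26b} together with \eqref{09-02-26} gives $f_{\underline{H}}\circ\underline{\phi}\circ\alpha_G=\alpha_Z\circ\psi=\psi_{\mathbb K}\circ\alpha_Y=\psi_{\mathbb K}\circ f_{\underline{G}}\circ\alpha_G$, using Krieger's lift $\psi_{\mathbb K}$. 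To extend this to all of $X_{\underline{G}}$, I would use that both sides are label-compatible maps into the right-resolving, follower-separated cover $\mathbb K(Z)$, so they are determined by the vertex at one time. Uniqueness of $\underline{\phi}$ is argued as in the proof of Theorem \ref{25-03-26}: any label-compatible $\lambda$ must send $\alpha_G(y)$ to a ray over $\psi(y)$, which is vertexwise dominated by $\alpha_H(\psi(y))$; applying the inverse forces equality, and right-resolvingness propagates this.
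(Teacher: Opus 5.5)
Your overall plan is the paper's: restrict $\overline{\phi}$ from Theorem \ref{25-03-26}, and first establish $\overline{\phi}\circ\alpha_G=\alpha_H\circ\psi$. Your argument for that identity is fine. In fact one inclusion is immediate from your maximality observation applied in $\overline{H}$, since $\overline{\phi}(\alpha_G(y))$ is a ray over $\psi(y)$; this bypasses the paper's use of Lemma \ref{16-02-26d} and $\beta_G$. The other inclusion follows by symmetry, because $\overline{\phi}$ acts as $N\mapsto\phi(N)$, hence monotonically, on far-left component tails.

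The gap is in the step you call the main task, $\overline{\phi}(X_{\underline{G}})\subseteq X_{\underline{H}}$. Monotonicity only tells you that $\overline{\phi}(x)$ is vertexwise dominated by $\alpha_H(\psi(L_{\underline{G}}(x)))$ near $-\infty$. Being dominated by a ray of $X_{\underline{H}}$ does not imply lying in $X_{\underline{H}}$, because $V_{\underline{H}}$ is not closed under subsets. In Example \ref{08-01-25a}, the ray staying at $\{a,b\}$ with labels in $\{0,1\}$ is dominated by $\alpha_G$ of its label, which stays at $\{a,b,c\}$, yet $\{a,b\}\notin V_{\underline{G}}$.

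What you are missing is that $\alpha_G(Y)$ is dense in $X_{\underline{G}}$. Given $x\in X_{\underline{G}}$ and $j\in\mathbb Z$, pick $w\in Y$ with $D^{\sigma^j(w)}=s_{\underline{G}}(x_j)$. The spliced ray $\alpha_G(w)_{(-\infty,j-1]}x_{[j,\infty)}$ agrees with $x$ on $[j,\infty)$ and equals $\alpha_G$ of its own label by Lemma \ref{11-11-24axz}. Then \eqref{09-02-26}, continuity of $\overline{\phi}$ and closedness of $X_{\underline{H}}$ give the inclusion; the paper uses the closely related density coming from Lemma \ref{10-02-25dxxz}. The same density is what you need to extend \eqref{20-01-25d} from $\alpha_G(Y)$ to all of $X_{\underline{G}}$. ``Determined by the vertex at one time'' does not explain why the two image rays share a vertex at arbitrarily early times for a general $x$.

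The uniqueness argument has a more serious gap. Label-compatibility gives you two facts: $\lambda(\alpha_G(y))$ is vertexwise dominated by $\alpha_H(\psi(y))$, and $\lambda^{-1}(\alpha_H(\psi(y)))$ is dominated by $\alpha_G(y)$. To combine them you must apply $\lambda$ to the second domination, i.e.\ you need $\lambda$ to preserve vertexwise inclusion. That holds for $\overline{\phi}$ by construction, but not for an arbitrary conjugacy satisfying only \eqref{18-01-26e}. The uniqueness proof of Theorem \ref{25-03-26} you invoke used the extra hypothesis that $\lambda$ extends $\phi''$, which is not available here.

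The paper needs a genuinely new ingredient at this point. On a source component $C$ of $\underline{G}$ one has $\#L_G^{-1}(L_{\underline{G}}(x))=M(C)$ for all $x\in X_C$ (Lemma \ref{20-01-26dx}). Hence $L_{\underline{G}}$ is injective on $X_C$, and distinct source components have disjoint label images (Corollary \ref{20-01-26h}). Since a conjugacy carries source components to source components, label-compatibility alone forces $\lambda=\underline{\phi}$ there. Right-resolvingness of $\underline{H}$ extends this to rays backward asymptotic to a source component, and these are dense in $X_{\underline{G}}$.
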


The map $\psi_\mathbb K$ in the diagram \eqref{20-01-25d} is a conjugacy, and together with the lower commuting square 
in \eqref{20-01-25d} it comes from Krieger's theorem and requires only the existence of the conjugacy $\psi$. See Theorem 2.11 in \cite{Th1}. The novelty compared to Krieger's theorem is the conjugacy $\underline{\phi}$, and the rest of this section is dedicated to its construction and the proof of Theorem \ref{18-01-26d}.

It follows from Theorem \ref{18-01-26d} that when $(G,L_G)$ is a right-resolving and weakly canonical cover of $Y$ in the sense of \cite{Th1}, then $(\underline{G},L_{\underline{G}})$ is a strongly canonical cover of both $X_{\mathbb K(Y)}$ and $Y$.



We assume now that we are in the setting of Theorem \ref{18-01-26d}. Note that since $(G,L_G)$ is right-resolving the set $L_G^{-1}(y)$ is finite for all $y\in Y$. By using the identification of elements in $X_{G''}$ with finite subsets of $X_G$ which was described following Lemma \ref{12-02-26bx} we can therefore define a map $\beta_G : Y \to X_{G''}$ such that
$$
N(\beta_G(y)) = L_G^{-1}(y) .
$$
Since \eqref{09-10-25axx} commutes it follows that
\begin{equation}\label{12-02-26d}
\phi'' \circ \beta_G = \beta_H \circ \psi .
\end{equation}
Together with the map $\alpha_G$ defined in Section \ref{consec} the map $\beta_G$ will be an important tool. 

\subsection{The non-wandering part}

\begin{lemma}\label{08-10-25bxz} For all $y \in Y$ the element $\beta_G(y)$ is forward asymptotic to $\alpha_G(y)$.
\end{lemma}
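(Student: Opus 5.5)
The plan is to compare the two rays vertex by vertex. Both $\beta_G(y)$ and $\alpha_G(y)$ lie in $X_{\overline{G}}$, since $X_{G''} \subseteq X_{\overline{G}}$ and $X_{\underline{G}} \subseteq X_{\overline{G}}$, and both carry the label $y$. Since $(\overline{G},L_{\overline{G}})$ is right-resolving, two rays with the same label that pass through the same vertex at some time $k$ agree at all times $j \geq k$. So it suffices to find one $k \in \mathbb Z$ with $s_{\overline{G}}(\beta_G(y)_k) = s_{\overline{G}}(\alpha_G(y)_k) = D^{\sigma^k(y)}$.

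First I will make both vertex sequences explicit. Write $D_k := D^{\sigma^k(y)}$. Unwinding the definition, $D_k$ is the set of vertices $t_G(x)$ where $x$ runs over the left-infinite paths in $G$ labeled $y_{(-\infty,k-1]}$. By the identification $N(\beta_G(y)) = L_G^{-1}(y)$, the set $B_k := s_{G''}(\beta_G(y)_k)$ consists of the vertices $s_G(z_k)$ with $z \in X_G$ and $L_G(z) = y$. Every such $z$ yields a left-infinite path labeled $y_{(-\infty,k-1]}$ ending at $s_G(z_k)$, so $B_k \subseteq D_k$ for all $k$. The task is therefore to prove the reverse inclusion for some $k$.

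Next I will use a cardinality argument. By definition of $\underline{G}$ we have $D_{k+1} = [D_k, y_k]$. Because $(G,L_G)$ is right-resolving, each vertex of $D_k$ emits at most one edge labeled $y_k$, so $\#D_{k+1} \leq \#D_k$. The sequence $(\#D_k)_k$ is thus non-increasing and bounded below by $1$, hence eventually constant, say for all $k \geq K$.

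For $j \geq K$ the map $D_j \to D_{j+1}$, sending a vertex to the endpoint of its unique outgoing $y_j$-edge, is defined only on the vertices that emit such an edge. Its image is all of $D_{j+1}$, and since $\#D_{j+1} = \#D_j$ it must be defined on every vertex of $D_j$ (and is a bijection). Hence every $v \in D_K$ starts a right-infinite path labeled $y_{[K,\infty)}$. Concatenating it with a left-infinite path labeled $y_{(-\infty,K-1]}$ that ends at $v$ gives $z \in X_G$ with $L_G(z) = y$ and $s_G(z_K) = v$. This shows $D_K \subseteq B_K$, so $B_K = D_K$, and right-resolvingness finishes the argument.

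The only delicate point is getting the index conventions right: $D^{\sigma^k(y)}$ should correspond to the start vertex at coordinate $k$, which matches property a) of $\alpha_G$. Beyond that I expect no real obstacle; the key observation is simply that the sizes $\#D_k$ stabilize.
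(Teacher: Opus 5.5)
Your proof is correct and has the same structure as the paper's. You show the inclusion $s_{G''}(\beta_G(y)_k) \subseteq s_{\underline{G}}(\alpha_G(y)_k)$, get equality from some time on, and then use right-resolvingness of $\overline{G}$ to conclude that the two rays agree from that time on. The paper only invokes ``a compactness argument'' for the eventual equality, whereas you give an explicit finite one: once $\#D^{\sigma^k(y)}$ has stabilized, the $y_k$-transition maps are total bijections, so every vertex of $D^{\sigma^K(y)}$ extends forward along $y_{[K,\infty)}$, which makes that step self-contained.
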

\begin{proof} Note that
$$
s_{\underline{G}}(\alpha_G(y)_n) = \left\{t_G(x): \ x \in X_G(-\infty,n-1], \ L_G(x) = y_{(-\infty,n-1]}\right\} \supseteq  s_{G''}(\beta_G(y)_n) 
$$
for all $n \in \mathbb Z$. A compactness argument shows that there is an $N \in \mathbb Z$ such that
$$
\left\{t_G(x): \ x \in X_G(-\infty,n-1], \ L_G(x) = y_{(-\infty,n-1]}\right\} = s_{G''}(\beta_G(y)_n) 
$$
for all $n \geq N$. It follows that $\alpha_G(y)_j = \beta_G(y)_j$ for all $j \geq N+1$.
\end{proof}

When $y$ is periodic, both $\alpha_G(y)$ and $\beta_G(y)$ are periodic and hence Lemma \ref{08-10-25bxz} has the following

\begin{corollary}\label{09-10-25b} Let $p \in \Per (Y)$. Then $\beta_G(p) = \alpha_G(p)$.
\end{corollary}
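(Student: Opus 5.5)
The plan is to combine Lemma \ref{08-10-25bxz} with periodicity. Let $p \in \Per(Y)$ and choose $m \geq 1$ with $\sigma^m(p) = p$.

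First, I check that both $\alpha_G(p)$ and $\beta_G(p)$ are periodic with period $m$.

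For $\alpha_G$ this is immediate from the fact, noted in Section \ref{consec}, that $\alpha_G$ commutes with the shift. It gives
$$
\sigma^m(\alpha_G(p)) = \alpha_G(\sigma^m(p)) = \alpha_G(p).
$$

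For $\beta_G$ I argue from its definition, $N(\beta_G(y)) = L_G^{-1}(y)$. Since $L_G$ commutes with the shift, $L_G^{-1}(\sigma(y)) = \sigma(L_G^{-1}(y))$. Under the identification of elements of $X_{G''}$ with finite subsets of $X_G$, this means $N(\sigma(\beta_G(y))) = N(\beta_G(\sigma(y)))$. As $N$ determines the element of $X_{G''}$, this gives $\sigma\circ\beta_G = \beta_G\circ\sigma$, and hence $\sigma^m(\beta_G(p)) = \beta_G(p)$.

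Next, by Lemma \ref{08-10-25bxz} there is an $N \in \mathbb Z$ with $\alpha_G(p)_j = \beta_G(p)_j$ for all $j \geq N$. Now take an arbitrary $k \in \mathbb Z$ and pick $r \in \mathbb N$ with $k + rm \geq N$. Periodicity then gives
$$
\alpha_G(p)_k = \alpha_G(p)_{k+rm} = \beta_G(p)_{k+rm} = \beta_G(p)_k .
$$
Hence $\alpha_G(p) = \beta_G(p)$, viewed as elements of $X_{\overline{G}}$, which contains both $X_{\underline{G}}$ and $X_{G''}$.

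The only point requiring any care is the shift-equivariance of $\beta_G$, which is a routine unwinding of the identification $N$. The rest is immediate.
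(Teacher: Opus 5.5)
Your proof is correct and follows the paper's argument: the paper simply notes that $\alpha_G(p)$ and $\beta_G(p)$ are both periodic and then applies Lemma \ref{08-10-25bxz}. You add the details the paper leaves implicit, namely the shift-equivariance of $\beta_G$ and the standard fact that two forward asymptotic points with a common period coincide.
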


When $w \in A^m$ we denote by $w^\infty$ the $m$-periodic element $w^\infty$ in $\Per(A^\mathbb Z)$ with the property that $(w^\infty)_{[0,m-1]} = w$. The proof of the following lemma is similar to that of Lemma 2.16 in \cite{Th1}.

\begin{lemma}\label{10-02-25dxxz} Let $v\in V_{\underline{G}}$. There is a periodic point $p \in \Per (Y)$ and an element $y' \in X_{\underline{G}}$ backward asymptotic to $\alpha_G(p)$ such that $t_{\underline{G}}(y'_{(-\infty,-1]}) = v$.
\end{lemma}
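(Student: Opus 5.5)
Write $v = D^y$ for some $y \in Y$; this is possible by definition of $V_{\underline{G}}$. The aim is to replace the left-infinite past $y_{(-\infty,-1]}$ by one that ends in a periodic tail and still lands at $v$ in $\underline{G}$. The natural way to do this uses the finiteness of the vertex set of $\underline{G}$ together with the right-inverse $\alpha_G$.

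\textbf{Step 1: find a repeating vertex in the past of $\alpha_G(y)$.} Consider the ray $\alpha_G(y) \in X_{\underline{G}}$. Its vertices are $s_{\underline{G}}(\alpha_G(y)_k) = D^{\sigma^k(y)}$, and in particular $t_{\underline{G}}(\alpha_G(y)_{(-\infty,-1]}) = D^y = v$. Since $V_{\underline{G}}$ is finite, some vertex $u$ occurs infinitely often among $D^{\sigma^k(y)}$, $k \leq -1$. Choose $k_1 < k_2 \leq 0$ with $D^{\sigma^{k_1}(y)} = D^{\sigma^{k_2}(y)} = u$, and set $w := y_{[k_1,k_2-1]}$. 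Then $\alpha_G(y)_{[k_1,k_2-1]}$ is a loop at $u$ in $\underline{G}$ with label $w$, and $\alpha_G(y)_{[k_2,-1]}$ is a path from $u$ to $v$.

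\textbf{Step 2: build $y'$ and identify the periodic point.} Let $y'$ be the ray whose left part is the infinite concatenation of the loop $\alpha_G(y)_{[k_1,k_2-1]}$, followed by $\alpha_G(y)_{[k_2,-1]}$ and then any continuation into the future. Such a continuation exists because $\underline{G}$ has no sinks. Then $t_{\underline{G}}(y'_{(-\infty,-1]}) = v$. The left tail of $y'$ is periodic, so $y'$ is backward asymptotic to a periodic ray $q$ of $X_{\underline{G}}$ that traces the loop. Its label $p := L_{\underline{G}}(q)$ is a shift of $w^\infty$, hence $p \in \Per(Y)$. It remains to show that $q = \alpha_G(p)$, possibly after shifting $p$ appropriately.

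\textbf{Step 3 (the main obstacle): show $q = \alpha_G(p)$.} In general a loop in $\underline{G}$ at $u$ need not coincide with $\alpha_G$ of its label, because $D^{p}$ is computed from the infinite past $w^\infty$ and may differ from $u$. I would handle this as in Lemma 2.16 of \cite{Th1}, with a finer choice of the loop.

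The key fact is monotonicity. If $D = D^{y}$ and we follow the labels of $w$, then $[D,w] = D^{\sigma^{|w|}(y)}$, and the map $D \mapsto [D,w]$ on subsets of $V_G$ preserves inclusions. Set $u_0 := V_G$ and iterate $u_{n+1} := [u_n, w]$. Since $u_0 \supseteq u$, monotonicity gives $u_n \supseteq u$ for all $n$. The sequence $(u_n)$ is eventually periodic, and the compactness argument from Lemma \ref{08-10-25bxz} identifies $D^{w^\infty}$ with its eventual value. However, this limit contains $u$ rather than equalling it, so the argument has to be refined.

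The refinement I would try first is to choose $u$ more carefully. Among the vertices $D^{\sigma^k(y)}$ that recur infinitely often as $k \to -\infty$, the choice should be made so that the whole past $y_{(-\infty,k_2-1]}$ can be approximated by long powers of $w$. I would pick $k_1$ very negative and make $w$ long, so that $w^n$ ends in a suffix of $y_{(-\infty,k_2-1]}$ long enough to force the $D$-set. By compactness, $D^y$ depends only on a sufficiently long finite portion of the past: for each $k$ there is $L$ such that every left-infinite sequence agreeing with $y_{(-\infty,k-1]}$ on the last $L$ symbols has $D$-set contained in $D^{\sigma^k(y)}$. Combined with the inclusion $D^{p'} \supseteq u$ obtained above, this gives equality $D^{p'} = u$, where $p' := w^\infty$ is shifted so that its past ends just before position $k_2$.

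Once $D^{\sigma^j(p')}$ agrees with the loop vertices at every position of one period, the fact that $\underline{G}$ is right-resolving gives $q = \alpha_G(p')$. Setting $p := p'$ then completes the proof.
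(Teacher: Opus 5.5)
Your proof is correct. Its skeleton matches the paper's: pick a vertex that recurs in the past of $\alpha_G(y)$, take a loop at it, pass to the periodic label, and splice. The difference lies in how you certify that the loop equals $\alpha_G$ of its label.

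\textbf{The paper's route.} It first waits until the cardinalities $\#D_j$ stabilize. It then takes a vertex $D$ recurring at times $n_k\to-\infty$ and considers all loops $\gamma_k$ from time $n_k$ to $n_0$, arguing by contradiction with counts. Suppose every $q_k=L_{\underline{G}}(\gamma_k)^\infty$ had $\#L_G^{-1}(q_k)>\#D$. Then a compactness limit would produce a left-infinite path labeled $y_{(-\infty,n_0-1]}$ ending at a set strictly larger than $D$, contradicting the stabilization. Hence some $\gamma_k$ has $\#L_G^{-1}(q_k)=\#D$, and this forces $\alpha_G(q_k)=\gamma_k^\infty$.

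\textbf{Your route.} You sandwich the $D$-set directly, with no contradiction and no counting. The loop identity $[u,w]=u$ together with monotonicity gives $u\subseteq D^{\sigma^{k_2}(p')}$. The stabilization of the finite-window sets of terminal vertices of paths labeled $y_{[k_2-L,k_2-1]}$ gives the reverse inclusion once $k_2-k_1\ge L$.

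When writing it up, make the order of choices explicit: first fix an occurrence $k_2$ of the recurring vertex $u$, then take $L=L(k_2)$, then choose an occurrence $k_1\le k_2-L$. What must be chosen carefully is the loop length, not $u$ itself. A few small clean-ups:
\begin{itemize}
\item The detour through $u_0=V_G$ is unnecessary. $[u,w]=u$ already lets you extend backwards inside $u$. If you keep it, note that $(u_n)$ is decreasing, hence eventually constant, so ``eventual value'' is well defined.
\item Write $D^{\sigma^{k_2}(p')}$ rather than $D^{p'}$.
\item Spell out the propagation of the equality at time $k_2$ along one period, via $D^{\sigma^{j+1}(z)}=[D^{\sigma^j(z)},z_j]$ and periodicity.
\end{itemize}

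\textbf{What each approach buys.} Yours is constructive: every sufficiently long loop works, not merely some loop. It also never uses that $(G,L_G)$ is right-resolving, only that the subset construction is deterministic. So it proves the lemma for an arbitrary presentation without sources or sinks. The paper's version needs right-resolvingness, since it counts $\#L_G^{-1}(q)$ and uses that $\#D_j$ is monotone. In exchange, it is phrased in the same terms as the later arguments with $\beta_G$ and the multiplicities $M(C)$.
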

\begin{proof} Since $v \in V_{\underline{G}}$ there is an element $y \in Y$ such that $v = s_{\underline{G}}(\alpha_G(y)_0)$. Set $D_j := s_{\underline{G}}(\alpha_G(y)_j), \ j \leq 0$. Since $(G,L_G)$ is right-resolving there is an $N <0$ such that 
\begin{equation}\label{17-04-25bxz}
\# D_j = \# D_N
\end{equation} 
for all $j \leq N$.
 By the pigeon hole principle there is a sequence $\cdots < n_3 < n_2 <n_1 < n_0 \leq N$ and a vertex $D \in V_{\underline{G}}$ such that $D =s_{\underline{G}}(\alpha_G(y)_{n_k})$ for all $k \in \mathbb N \cup \{0\}$. Consider the following loops in $\underline{G}$:
$$
\gamma_k := D \overset{y_{n_k}}{\to} D_{n_k+1}  \overset{y_{n_k+1}}{\to} \cdots \cdots \overset{y_{n_0-1}}{\to} D .
$$
If $\# L_G^{-1}(L_{\underline{G}}(\gamma_k)^\infty) > \# D$ for all $k$ it follows that there is an element $x' \in X_{\overline{G}}(-\infty,{n_0}-1]$ such that $D \subsetneq t_{\overline{G}}(x')$ and $L_{\overline{G}}(x') = y_{(-\infty,n_0-1]}$. By definition of $\alpha_G(y)$ this implies that $\# s_{\underline{G}}(\alpha_G(y)_j)  = \# D_j > \# D =\# D_N $ for all $j \leq n_0$, contradicting \eqref{17-04-25bxz}. This shows that there is a $k$ such that $q:= L_{\underline{G}}((\gamma_k)^\infty) \in Y$ is periodic and $\# L_G^{-1}(q) = \#D$. Thus $\alpha_G(q) = (\gamma_k)^\infty$ and 
$$
y' := \alpha_G(\sigma^{n_0}(q))_{(-\infty,n_0-1]}\alpha_G(y)_{[n_0,\infty)}
$$
has the stated properties.
\end{proof}

\begin{lemma}\label{11-11-24axz} Let $x,y \in X_{\underline{G}}$, and assume that $x = \alpha_G(L_{\underline{G}}(x))$. If there is an $N \in \mathbb Z$ such that $x_i = y_i$ for $i \leq N$, then $y = \alpha_G(L_{\underline{G}}(y))$.
\end{lemma}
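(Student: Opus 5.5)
The lemma should follow directly from two facts: the vertex $D^{\sigma^k(y)}$ visited by $\alpha_G(y)$ at time $k$ depends only on the left half $y_{(-\infty,k-1]}$, and $(\underline{G},L_{\underline{G}})$ is right-resolving. No earlier lemma beyond the definitions in Section \ref{consec} should be needed.

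Set $z := L_{\underline{G}}(x)$ and $w := L_{\underline{G}}(y)$. Both lie in $Y$, since $L_{\underline{G}}(X_{\underline{G}}) = Y$. Because $x_i = y_i$ for $i \leq N$, the labels agree there: $z_{(-\infty,N]} = w_{(-\infty,N]}$. As in the proof of Lemma \ref{08-10-25bxz}, for every $k \in \mathbb Z$ we have
$$
D^{\sigma^k(u)} = \left\{t_G(x') : \ x' \in X_G(-\infty,k-1], \ L_G(x') = u_{(-\infty,k-1]}\right\}, \qquad u \in Y,
$$
so this set depends only on $u_{(-\infty,k-1]}$. Hence $D^{\sigma^k(z)} = D^{\sigma^k(w)}$ for all $k \leq N+1$. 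Using $x = \alpha_G(z)$, property a) of $\alpha_G$, and $x_k = y_k$ for $k \leq N$, this gives
$$
s_{\underline{G}}(y_k) = s_{\underline{G}}(x_k) = D^{\sigma^k(z)} = D^{\sigma^k(w)} = s_{\underline{G}}(\alpha_G(w)_k)
$$
for all $k \leq N$.

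Now compare $y$ and $\alpha_G(w)$ edge by edge. For every $k \leq N$ they start at the same vertex of $\underline{G}$, and $L_{\underline{G}}(y) = w = L_{\underline{G}}(\alpha_G(w))$ by property b) of $\alpha_G$. An edge of $\overline{G}$ is the pair $(D,a)$ of its start vertex and its label, so this already forces $y_k = \alpha_G(w)_k$ for all $k \leq N$. For $k > N$, both rays pass through the common vertex $s_{\underline{G}}(y_N)$ at time $N$ and carry the same labels from then on. Since $(\underline{G},L_{\underline{G}})$ is right-resolving, induction on $k$ gives $y_k = \alpha_G(w)_k$ for all $k \geq N$. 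Therefore $y = \alpha_G(w) = \alpha_G(L_{\underline{G}}(y))$.

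The only point that needs care is the first step: the vertices $D^{\sigma^k(\cdot)}$ must be seen to depend only on the past. One has to check that the indexing convention $D^{\sigma^k(u)} \leftrightarrow u_{(-\infty,k-1]}$ is applied consistently, so that agreement of labels up to time $N$ indeed yields agreement of these vertices at all times $k \leq N$.
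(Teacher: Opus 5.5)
Your proof is correct and takes essentially the same route as the paper. Agreement of the labels up to time $N$ forces $\alpha_G(L_{\underline{G}}(x))$ and $\alpha_G(L_{\underline{G}}(y))$ to agree on the past (the paper records this for $j \leq N-1$, you get it for $k \leq N$ by spelling out that $D^{\sigma^k(u)}$ depends only on $u_{(-\infty,k-1]}$), and the right-resolving property of $(\underline{G},L_{\underline{G}})$ then carries the agreement forward.
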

\begin{proof} Since $L_{\underline{G}}(x)_i = L_{\underline{G}}(y)_i$ for $i \leq N$, it follows from the definition of 
$\alpha_G$ that 
$$
\alpha_G(L_{\underline{G}}(x))_j = \alpha_G(L_{\underline{G}}(y))_j
$$ 
for $j \leq N-1$. Hence 
$$
t_{\underline{G}}(y_{N-1}) =t_{\underline{G}}(x_{N-1}) = t_{\underline{G}}\left(\alpha_G(L_{\underline{G}}(x))_{N-1}\right) =  t_{\underline{G}}\left(\alpha_G(L_{\underline{G}}(y))_{N-1}\right).
$$
Since $L_{\underline{G}}(y_{[N,\infty)}) = L_{\underline{G}}\left(\alpha_G(L_{\underline{G}}(y))_{[N,\infty)}\right)$ it follows that $y_{[N,\infty)} = \alpha_G(L_{\underline{G}}(y))_{[N,\infty)}$ because $(\underline{G},L_{\underline{G}})$ is right-resolving.  We know already that $y_{(-\infty,N-1]} = x_{(-\infty,N-1]} = \alpha_G(L_{\underline{G}}(x))_{(-\infty,N-1]} =  \alpha_G(L_{\underline{G}}(y))_{(-\infty,N-1]}$, and conclude therefore that $y = \alpha_G(L_{\underline{G}}(y))$.
\end{proof}

\begin{lemma}\label{03-07-25axz} Let $x \in X_{\underline{G}}$. There is an element $y \in Y$ such that $\beta_G(y)$ is forward asymptotic to $x$.
\end{lemma}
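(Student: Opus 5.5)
The plan is to realize the forward half of $x$ as the forward half of $\alpha_G(y)$ for a suitable $y \in Y$, and then apply Lemma \ref{08-10-25bxz}. Set $v := s_{\underline{G}}(x_0) \in V_{\underline{G}}$. By definition of $V_{\underline{G}}$ there is $y' \in Y$ with $v = D^{y'}$. Note that $D^{y'}$ depends only on $y'_{(-\infty,-1]}$. Write $z := L_{\underline{G}}(x_{[0,\infty)}) \in A^{\mathbb N}$. I will show that $y := y'_{(-\infty,-1]}z$ lies in $Y$.

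\textbf{Step 1: $y \in Y$.} For each $n$, the path $x_{[0,n]}$ in $\overline{G}$ has terminal vertex $t_{\overline{G}}(x_{[0,n]}) \neq \emptyset$. By definition of the subset construction, there is then a finite path $\mu^{(n)}$ in $G$ with $s_G(\mu^{(n)}) \in D^{y'}$ and $L_G(\mu^{(n)}) = z_{[0,n]}$. Since $D^{y'}$ and $E_G$ are finite, a compactness (K\"onig) argument gives a right-infinite path $\mu$ in $G$ with $s_G(\mu) = u \in D^{y'}$ and $L_G(\mu) = z$. Concretely, one passes to subsequences on which the start vertex and successively longer initial segments are constant.

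By definition of $D^{y'}$ there is a left-infinite path $\lambda \in X_G(-\infty,-1]$ with $t_G(\lambda) = u$ and $L_G(\lambda) = y'_{(-\infty,-1]}$. Then $\lambda\mu \in X_G$ and $L_G(\lambda\mu) = y'_{(-\infty,-1]}z = y$, so $y \in Y$. I expect this compactness step to be the only point needing care. The issue is that an edge of $\overline{G}$ only guarantees some $G$-edge from some element of the start set, so an actual infinite $G$-path labeled $z$ must be extracted.

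\textbf{Step 2: $\alpha_G(y)_{[0,\infty)} = x_{[0,\infty)}$.} Since $y_{(-\infty,-1]} = y'_{(-\infty,-1]}$, property a) of $\alpha_G$ gives $s_{\underline{G}}(\alpha_G(y)_0) = D^y = D^{y'} = s_{\underline{G}}(x_0)$. By property b), $L_{\underline{G}}(\alpha_G(y)_{[0,\infty)}) = y_{[0,\infty)} = z = L_{\underline{G}}(x_{[0,\infty)})$. Since $(\underline{G},L_{\underline{G}})$ is right-resolving, two right-infinite paths from the same vertex with the same label coincide.

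\textbf{Step 3: conclusion.} By Lemma \ref{08-10-25bxz}, $\beta_G(y)$ is forward asymptotic to $\alpha_G(y)$. By Step 2, $\alpha_G(y)$ agrees with $x$ on $[0,\infty)$. Hence $\beta_G(y)$ is forward asymptotic to $x$.
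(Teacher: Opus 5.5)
Your proof is correct. It follows the paper's skeleton but takes a more elementary route to the key step.

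The shared skeleton is this: glue a left half that terminates at $v=s_{\underline{G}}(x_0)$ onto $x_{[0,\infty)}$, show the glued ray equals $\alpha_G$ of its label, and finish with Lemma \ref{08-10-25bxz}. The paper gets the left half from Lemma \ref{10-02-25dxxz}, as a tail of $\alpha_G(p)$ for a periodic point $p$. It then uses Lemma \ref{11-11-24axz} to conclude that the glued ray is an $\alpha_G$-image.

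You see that periodicity plays no role in this statement. Any $y'$ with $D^{y'}=v$ works, since $\alpha_G(y')_{(-\infty,-1]}$ already terminates at $v$. Your Step 2 is exactly the content of Lemma \ref{11-11-24axz}, carried out directly via right-resolvingness. What your route buys is that it avoids the pigeonhole and cardinality argument behind Lemma \ref{10-02-25dxxz}. The paper's route simply reuses lemmas it needs elsewhere anyway, for instance in the density arguments.

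Your König argument in Step 1 can be shortened. The concatenation $\alpha_G(y')_{(-\infty,-1]}x_{[0,\infty)}$ is already a ray in $\underline{G}$, and the paper records $L_{\overline{G}}(X_{\overline{G}})=Y$ in Section 3.1. So its label $y'_{(-\infty,-1]}z$ lies in $Y$ at once. That said, your compactness argument is essentially a proof of that inclusion, so nothing is wrong with it.
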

\begin{proof} By Lemma \ref{10-02-25dxxz} there is an element $y' \in X_{\underline{G}}$ and a periodic point $p\in Y$ such that $y'$ is backward asymptotic to $\alpha_G(p)$ and forward asymptotic to $x$. Since $\alpha_G(p) =  \alpha_G(L_{\underline{G}}(\alpha_G(p)))$, it follows from Lemma \ref{11-11-24axz} that $y' = \alpha_G(L_{\underline{G}}(y'))$. Set $y := L_{\underline{G}}(y')$ and note that $\beta_G(y)$ is forward asymptotic to $\alpha_G(y) = y'$ by Lemma \ref{08-10-25bxz}. Hence $\beta_G(y)$ is also forward asymptotic to $x$.
\end{proof}

\begin{lemma}\label{08-10-25az}
$$
\overline{\Per (X_{\underline{G}}})
 = \bigcup_{y \in Y} \bigcap_{N \geq 1} \overline{\left\{\sigma^n(\beta_G(y)) : \ n \geq N \right\}} .
$$
\end{lemma}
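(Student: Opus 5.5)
The plan is to reduce both sides to statements about $\omega$-limit sets of rays in $X_{\underline{G}}$, using Lemma \ref{08-10-25bxz} to move from $\beta_G(y)$ to $\alpha_G(y)$. For $x$ in an edge shift, write $\omega(x) := \bigcap_{N \geq 1} \overline{\left\{\sigma^n(x): \ n \geq N\right\}}$. If $x$ and $x'$ are forward asymptotic, then $\omega(x) = \omega(x')$. By Lemma \ref{08-10-25bxz}, $\beta_G(y)$ is forward asymptotic to $\alpha_G(y) \in X_{\underline{G}}$, so the right-hand side of the statement equals $\bigcup_{y \in Y} \omega(\alpha_G(y))$. This holds even though $\beta_G(y)$ itself need only lie in $X_{G''}$.

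\emph{Inclusion $\supseteq$.} Let $x \in X_{\underline{G}}$, for instance $x = \alpha_G(y)$. Since $\underline{G}$ is finite, the path $x_{[0,\infty)}$ can pass from one component of $\underline{G}$ to another only finitely many times, and it uses edges outside all components only finitely many times. Hence there is an $N$ and a component $C$ of $\underline{G}$ with $x_n \in E_C$ for all $n \geq N$. Then every limit point of $\sigma^n(x)$ as $n \to \infty$ lies in $X_C$. Moreover $X_C \subseteq \bigsqcup_{C'} X_{C'} = \overline{\Per (X_{\underline{G}})}$, the union being over the components $C'$ of $\underline{G}$, as recalled in Section 2. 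So $\omega(\alpha_G(y)) \subseteq \overline{\Per (X_{\underline{G}})}$.

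\emph{Inclusion $\subseteq$.} Let $z \in \overline{\Per (X_{\underline{G}})}$. Then $z \in X_C$ for some component $C$ of $\underline{G}$. Since $C$ is strongly connected, I would build a ray $x \in X_C$ whose forward orbit is dense in $X_C$. To do this, list all finite paths in $C$ as $w_1, w_2, \dots$ and join consecutive ones by connecting paths in $C$ to obtain a right-infinite path through every finite path of $C$ infinitely often. Then prepend any left-infinite path in $C$ ending at its start vertex, which exists because $C$ is strongly connected and nonempty. Consequently $z \in \omega(x)$. Since $x \in X_C \subseteq X_{\underline{G}}$, Lemma \ref{03-07-25axz} gives a $y \in Y$ with $\beta_G(y)$ forward asymptotic to $x$, and therefore $z \in \omega(x) = \omega(\beta_G(y))$.

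No step looks seriously difficult; the substance lies in Lemma \ref{03-07-25axz} and Lemma \ref{08-10-25bxz}. The points needing care are the following. First, the components of $\underline{G}$ and the closure of its periodic points must be taken inside $\underline{G}$; this is fine because $\underline{G}$ is a hereditary subgraph of $\overline{G}$. Second, forward-transitive points must exist in $X_C$ even in degenerate cases, such as $C$ a single cycle, where $X_C$ is finite and the construction above still works.
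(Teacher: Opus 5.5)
Your proposal is correct and follows essentially the same route as the paper. For the inclusion of the right-hand side into $\overline{\Per (X_{\underline{G}})}$, you replace $\beta_G(y)$ by the forward asymptotic $\alpha_G(y)$ (Lemma \ref{08-10-25bxz}) and use that $\alpha_G(y)$ eventually stays in one component; for the reverse inclusion, you take a forward transitive point of $X_C$ and apply Lemma \ref{03-07-25axz}, the only difference being that you construct this point explicitly where the paper simply chooses one.
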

\begin{proof} Let $y \in Y$. It follows from Lemma \ref{08-10-25bxz} that
$$
\bigcap_{N \geq 1} \overline{\left\{\sigma^n(\beta_G(y)) : \ n \geq N \right\}} = \bigcap_{N \geq 1} \overline{\left\{\sigma^n(\alpha_G(y)) : \ n \geq N \right\}}.
$$
Since there is a component of $X_{\underline{G}}$ which contains $\alpha_G(y)_n$ for all sufficiently large $n$, it follows that 
$$
 \bigcap_{N \geq 1} \overline{\left\{\sigma^n(\beta_G(y)) : \ n \geq N \right\}} \subseteq \overline{\Per (X_{\underline{G}})} .
 $$
This proves the inclusion $\supseteq$. The obtain the reversed inclusion, let $C$ be a component in $X_{\underline{G}}$. Choose an element $x \in X_C$ which is forward transitive in $X_C$; i.e. every word from $\mathbb W(X_C)$ occurs infinitely many times to the right in $x$. Let $d$ be a metric for the topology of $X_{\overline{G}}$. It follows from Lemma \ref{03-07-25axz} that there is an element $y \in Y$ such that 
\begin{equation}\label{08-10-25dz}
\lim_{n \to \infty} d\left(\sigma^n(\beta_G(y)),\sigma^n(x)\right) = 0.
\end{equation}
For each $z \in X_C$ there is a strictly increasing sequence $\{n_i\}$ in $\mathbb N$ such that
$\lim_{i \to \infty} d(\sigma^{n_i}(x),z) = 0$.
Thanks to \eqref{08-10-25dz}, by passing to a subsequence we can then arrange that
$$
\lim_{i \to \infty} d(\sigma^{n_i}(\beta_G(y)),z) = 0.
$$
It follows that $z \in  \bigcap_{N \geq 1} \overline{\left\{\sigma^n(\beta_G(y)) : \ n \geq N \right\}}$.
\end{proof}



\begin{lemma}\label{09-10-25fz} Let $\phi'': X_{G''} \to X_{H''}$ be the conjugacy from Lemma \ref{12-02-26bx}. Then 
\begin{itemize}
\item[a)] $\phi''( \overline{\Per (X_{\underline{G}})}) = \overline{\Per (X_{\underline{H}})}$,
\item[b)]  $ f_{\underline{H}} \circ \phi''(x) = \psi_{\mathbb K} \circ f_{\underline{G}}(x)$ for all $x \in \overline{\Per (X_{\underline{G}})}$.

\end{itemize}
\end{lemma}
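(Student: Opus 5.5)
The plan is to reduce both statements to the orbits $\beta_G(y)$, using Lemma \ref{08-10-25az} together with the intertwining relation \eqref{12-02-26d}, $\phi''\circ\beta_G = \beta_H\circ\psi$.

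First note that $\overline{\Per (X_{\underline{G}})}\subseteq X_{G''}$. Indeed, components of $\underline{G}$ are components of $\overline{G}$, and components of $\overline{G}$ are components of $G''$ (this was used in the proof of Theorem \ref{25-03-26}). So $\phi''$ is defined on this set.

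For a), I take $y\in Y$ and use continuity and shift-commutativity of $\phi''$. Since the $\omega$-limit set is taken inside the compact space $X_{G''}$, we get
$$
\phi''\Big(\bigcap_{N\ge1}\overline{\{\sigma^n(\beta_G(y)):n\ge N\}}\Big)=\bigcap_{N\ge1}\overline{\{\sigma^n(\phi''(\beta_G(y))):n\ge N\}}=\bigcap_{N\ge1}\overline{\{\sigma^n(\beta_H(\psi(y))):n\ge N\}}.
$$
Here $\beta_G(y)$ lies in $X_{G''}$, and the image of an $\omega$-limit set under a continuous commuting map is the $\omega$-limit set of the image, by compactness. Taking the union over $y\in Y$ and using that $\psi$ is a bijection $Y\to Z$, Lemma \ref{08-10-25az} applied to both $G$ and $H$ gives $\phi''(\overline{\Per (X_{\underline{G}})})=\overline{\Per (X_{\underline{H}})}$.

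For b), both sides are continuous and shift-commuting on $\overline{\Per (X_{\underline{G}})}$, and the target is right-resolving with labels determined by $\psi$. So it suffices to verify the identity on a dense set, or on $\omega$-limit points. I would first check it on periodic points. Let $x\in\Per(X_{\underline{G}})$ and set $p:=L_{\underline{G}}(x)$, which is periodic. Arguing as in Lemma \ref{10-02-25dxxz}, we may hope to have $x=\alpha_G(p)$, but this need not hold in general: a periodic loop in $\underline{G}$ need not be $\alpha_G$ of its label. So I would instead argue through asymptotics. By Lemma \ref{03-07-25axz}, for $x$ in the closure there is $y$ with $\beta_G(y)$ forward asymptotic to $x$, and then $\alpha_G(y)$ is forward asymptotic to $x$ by Lemma \ref{08-10-25bxz}. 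Applying $f_{\underline{G}}$ and Lemma \ref{27-01-26b}, $f_{\underline{G}}(x)$ is forward asymptotic to $\alpha_Y(y)$. Hence $\psi_{\mathbb K}(f_{\underline{G}}(x))$ is forward asymptotic to $\psi_{\mathbb K}(\alpha_Y(y))=\alpha_Z(\psi(y))$, using Krieger's result (Theorem 2.11 in \cite{Th1}) that $\psi_{\mathbb K}\circ\alpha_Y=\alpha_Z\circ\psi$.

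On the other side, $\phi''(x)$ is forward asymptotic to $\phi''(\beta_G(y))=\beta_H(\psi(y))$. That element is forward asymptotic to $\alpha_H(\psi(y))$, so $f_{\underline{H}}(\phi''(x))$ is forward asymptotic to $f_{\underline{H}}(\alpha_H(\psi(y)))=\alpha_Z(\psi(y))$. Thus $f_{\underline{H}}\circ\phi''(x)$ and $\psi_{\mathbb K}\circ f_{\underline{G}}(x)$ are forward asymptotic. They also have the same label $\psi(L_{\underline{G}}(x))$: this uses $L_{\mathbb K}\circ f=L$, the lower square of \eqref{20-01-25d}, and the label compatibility of $\phi''$ from Lemma \ref{12-02-26bx}.

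The main obstacle is passing from "forward asymptotic with equal labels" to equality. Right-resolving only propagates agreement forward, not backward. I would handle this by exploiting the fact that we are in the non-wandering part. For $x\in\overline{\Per (X_{\underline{G}})}$, and every $k$, apply the argument to $\sigma^{-k}(x)$. Alternatively, and more robustly, I would pick $x$ forward transitive in its component $C$. Then the two continuous maps $f_{\underline{H}}\circ\phi''$ and $\psi_{\mathbb K}\circ f_{\underline{G}}$ agree on $\sigma^n(x)$ for all large $n$, since being forward asymptotic means eventual equality of coordinates in edge shifts. By density of $\{\sigma^n(x):n\ge N\}$ in $X_C$ and continuity, they agree on all of $X_C$. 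Since $\overline{\Per (X_{\underline{G}})}$ is the disjoint union of the $X_C$, this proves b).
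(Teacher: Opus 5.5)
There is one false step, in the last paragraph of b). Put $F_1 := f_{\underline{H}}\circ\phi''$, $F_2 := \psi_{\mathbb K}\circ f_{\underline{G}}$, and for your forward transitive $x \in X_C$ put $u := F_1(x)$, $v := F_2(x)$. You correctly show that $u$ and $v$ are forward asymptotic. You then claim that $F_1$ and $F_2$ agree on $\sigma^n(x)$ for all large $n$. But $F_i(\sigma^n(x)) = \sigma^n(F_i(x))$, so this would mean $\sigma^n(u) = \sigma^n(v)$, i.e. $u=v$, which is exactly what is to be proved. Eventual equality of coordinates does not make the shifted points equal. Your other suggestion, rerunning the argument on $\sigma^{-k}(x)$, gives nothing new, because forward asymptoticity is invariant under the shift.

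The repair is immediate. Forward asymptoticity means $d(\sigma^n(u),\sigma^n(v)) \to 0$. So for $z \in X_C$ and $n_i \to \infty$ with $\sigma^{n_i}(x) \to z$, continuity gives $F_1(z) = \lim_i \sigma^{n_i}(u) = \lim_i \sigma^{n_i}(v) = F_2(z)$. With this change your proof is complete, and the remarks about equal labels and right-resolvingness are no longer needed.

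Compared with the paper: for a), the paper derives only the inclusion $\phi''(\overline{\Per (X_{\underline{G}})}) \subseteq \overline{\Per (X_{\underline{H}})}$ from \eqref{12-02-26d} and Lemma \ref{08-10-25az}, and then invokes symmetry. You observe that $\phi''$ carries the $\omega$-limit set of $\beta_G(y)$ onto that of $\beta_H(\psi(y))$; combined with bijectivity of $\psi$, this gives equality directly and is slightly cleaner.

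For b), you use the same ingredients as the paper: $\phi''\circ\beta_G = \beta_H\circ\psi$, Lemma \ref{08-10-25bxz}, Lemma \ref{27-01-26b}, and Krieger's identity $\psi_{\mathbb K}\circ\alpha_Y = \alpha_Z\circ\psi$. You run them in two stages: first asymptoticity of $F_1(x)$ and $F_2(x)$ via Lemma \ref{03-07-25axz}, then passage to limits along a transitive orbit. The paper shortcuts this. Lemma \ref{08-10-25az}, whose proof is precisely your transitive-point argument, already writes every $z \in \overline{\Per (X_{\underline{G}})}$ as $\lim_i \sigma^{n_i}(\beta_G(y))$. After that, a single chain of equalities using continuity of $\phi''$, $f_{\underline{H}}$, $\psi_{\mathbb K}$ and $f_{\underline{G}}$ finishes the proof.
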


\begin{proof} It follows from \eqref{12-02-26d} and Lemma \ref{08-10-25az} that $\phi''( \overline{\Per (X_{\underline{G}})}) \subseteq \overline{\Per (X_{\underline{H}})}$. By symmetry we have equality, proving a).

 Let $z \in \overline{\Per (X_{\underline{G}})}$. By Lemma \ref{08-10-25az} there is an $y \in Y$ such that $z = \lim_{i \to \infty} \sigma^{n_i}(\beta_{G}(y)) $ for some increasing sequence $\{n_i\}$ in $\mathbb N$. Using again that $\phi'' \circ \beta_G = \beta_H \circ \psi$ we get
\begin{align*}
&\phi''( \lim_{i \to \infty} \sigma^{n_i}(\beta_{G}(y))) = \lim_{i \to \infty} \sigma^{n_i}(\beta_{H}(\psi(y))) =  \lim_{i \to \infty} \sigma^{n_i}(\alpha_{H}(\psi(y))),
\end{align*}
where the last identity follows from Lemma \ref{08-10-25bxz}. Note that $f_{\underline{H}} \circ \alpha_H  = \alpha_{Z}$ by Lemma \ref{27-01-26b}.  By Krieger's theorem, cf. (7) in Theorem 2.11 of \cite{Th1}, $\alpha_{Z}\circ\psi = \psi_{\mathbb K} \circ \alpha_Y$. We conclude therefore that
\begin{align*}
&f_{\underline{H}} \circ \phi''(z) = f_{\underline{H}} \circ \phi''( \lim_{i \to \infty} \sigma^{n_i}(\beta_{G}(y))) = \lim_{i \to \infty} f_{\underline{H}}(\sigma^{n_i}(\alpha_{H}(\psi(y)))) \\
&  = \lim_{i \to \infty} \sigma^{n_i}\left( f_{\underline{H}} \circ \alpha_H \circ \psi(y)\right)= \lim_{i \to \infty}  \sigma^{n_i}(\alpha_{Z}(\psi(y)))= \lim_{i \to \infty}  \sigma^{n_i}(\psi_{\mathbb K}\circ\alpha_Y(y)) \\
& =  \psi_{\mathbb K}\left(\lim_{i \to \infty} \sigma^{n_i}(\alpha_{Y}(y))\right) =  \psi_{\mathbb K}\left(\lim_{i \to \infty} \sigma^{n_i}(f_{\underline{G}} \circ \alpha_{G}(y))\right) = \psi_{\mathbb K}\circ f_{\underline{G}}(z),
\end{align*}
proving b).
\end{proof}

Before we continue the proof of Theorem \ref{18-01-26d} it may be informative to see in an example what the graphs $(\overline{G},L_{\overline{G}})$, $(\underline{G},L_{\underline{G}})$ and $(G'',L_{G''})$ may look like.

\begin{ex} \label{08-01-25a}

The following labeled graph $(G,L_G)$ is irreducible, right-resolving and follower-separated and hence the minimal right-resolving graph of the sofic subshift $Y$ it represents. Note that $Y$ is mixing.

\begin{equation}\label{17-07-26b}
 \begin{tikzpicture}[node distance={35mm}, thick, main/.style = {draw, circle}] 
\node[main] (1) {$a$}; 
\node[main] (2) [right of=1] {$b$}; 
\node[main] (3) [right of=2] {$c$};
\draw[->] (1) to [out=170,in=200,looseness=20] node[left ,pos=0.5] {$2$} (1); 
\draw[->] (1) to [out=80,in=100,looseness=2] node[above ,pos=0.5] {$1$} (2); 
\draw[->] (1) to [out=45,in=150,looseness=1.5] node[above ,pos=0.5] {$0$} (2); 
\draw[->] (2) to [out=260,in=280,looseness=2] node[above ,pos=0.5] {$1$} (1);
\draw[->] (2) to [out=230,in=300,looseness=1.5] node[above ,pos=0.5] {$0$} (1); 
\draw[->] (2) to [out=45,in=170,looseness=1] node[above ,pos=0.5] {$2$} (3); 
\draw[->] (3) to [out=200,in=350,looseness=1] node[below ,pos=0.5] {$3$} (2);
\draw[->] (3) to [out=45,in=350,looseness=10] node[above,pos=0.5] {$0$} (3);  
\draw[->] (3) to [out=70,in=290,looseness=30] node[right,pos=0.5] {$1$} (3); 
\end{tikzpicture} 
\end{equation}

In this example the graph $(\overline{G},L_{\overline{G}})$ becomes the following.

\begin{equation*}
 \begin{tikzpicture}[node distance={35mm}, thick, main/.style = {draw, circle}] 
\node[main] (1) {$\{a\}$}; 
\node[main] (2) [right of=1] {$\{b\}$}; 
\node[main] (3) [right of=2] {$\{c\}$};
\node[main] (4) [below of=2] {$\{a,c\}$};
\node[main] (5) [right of=4] {$\{b,c\}$};
\node[main] (6) [below of=5] {$\{a,b,c\}$};
\node[main] (7) [below of=4] {$\{a,b\}$};
\draw[->] (1) to [out=170,in=200,looseness=20] node[left ,pos=0.5] {$2$} (1); 
\draw[->] (1) to [out=80,in=100,looseness=2] node[above ,pos=0.5] {$1$} (2); 
\draw[->] (1) to [out=45,in=150,looseness=1.5] node[above ,pos=0.5] {$0$} (2); 
\draw[->] (2) to [out=260,in=280,looseness=2] node[above ,pos=0.5] {$1$} (1);
\draw[->] (2) to [out=230,in=300,looseness=1.5] node[above ,pos=0.5] {$0$} (1); 
\draw[->] (2) to [out=45,in=170,looseness=1] node[above ,pos=0.5] {$2$} (3); 
\draw[->] (3) to [out=200,in=350,looseness=1] node[above ,pos=0.5] {$3$} (2);
\draw[->] (3) to [out=45,in=350,looseness=10] node[above,pos=0.5] {$0$} (3);  
\draw[->] (3) to [out=60,in=290,looseness=15] node[right,pos=0.5] {$1$} (3); 
\draw[->] (6) to [out=260,in=290,looseness=10] node[below,pos=0.5] {$0$} (6); 
\draw[->] (6) to [out=10,in=350,looseness=10] node[right,pos=0.5] {$1$} (6); 
\draw[->] (6) to [out=100,in=300,looseness=0.5] node[left,pos=0.5] {$2$} (4); 
\draw[->] (6) to [out=50,in=60,looseness=2.5] node[right,pos=0.5] {$3$} (2); 
\draw[->] (4) to [out=40,in=150,looseness=1.5] node[above,pos=0.5] {$0$} (5); 
\draw[->] (4) to [out=20,in=170,looseness=1] node[above,pos=0.5] {$1$} (5); 
\draw[->] (5) to [out=190,in=350,looseness=0.5] node[below,pos=0.5] {$0$} (4); 
\draw[->] (5) to [out=220,in=320,looseness=1] node[below,pos=0.3] {$1$} (4); 
\draw[->] (4) to [out=170,in=210,looseness=1] node[below,pos=0.3] {$2$} (1); 
\draw[->] (7) to [out=90,in=270,looseness=1] node[left,pos=0.5] {$2$} (4); 
\draw[->] (7) to [out=260,in=290,looseness=10] node[below,pos=0.5] {$0$} (7);
\draw[->] (7) to [out=170,in=200,looseness=10] node[left,pos=0.5] {$1$} (7);
\draw[->] (4) to [out=90,in=270,looseness=1] node[left,pos=0.4] {$3$} (2);
\draw[->] (5) to [out=90,in=270,looseness=1] node[left,pos=0.5] {$2$} (3);
\draw[->] (5) to [out=100,in=300,looseness=1] node[left,pos=0.5] {$3$} (2);
\end{tikzpicture} 
\end{equation*}
The graph $(\underline{G},L_{\underline{G}})$ is the following labeled subgraph of  $(\overline{G},L_{\overline{G}})$. We note that it is follower-separated and hence, by Corollary \ref{04-09-25d}, also a copy of the future cover of $Y$.

\begin{equation}\label{17-07-26d}
 \begin{tikzpicture}[node distance={35mm}, thick, main/.style = {draw, circle}] 
\node[main] (1) {$\{a\}$}; 
\node[main] (2) [right of=1] {$\{b\}$}; 
\node[main] (3) [right of=2] {$\{c\}$};
\node[main] (4) [below of=2] {$\{a,c\}$};
\node[main] (5) [right of=4] {$\{b,c\}$};
\node[main] (6) [below of=5] {$\{a,b,c\}$};
\draw[->] (1) to [out=170,in=200,looseness=20] node[left ,pos=0.5] {$2$} (1); 
\draw[->] (1) to [out=80,in=100,looseness=2] node[above ,pos=0.5] {$1$} (2); 
\draw[->] (1) to [out=45,in=150,looseness=1.5] node[above ,pos=0.5] {$0$} (2); 
\draw[->] (2) to [out=260,in=280,looseness=2] node[above ,pos=0.5] {$1$} (1);
\draw[->] (2) to [out=230,in=300,looseness=1.5] node[above ,pos=0.5] {$0$} (1); 
\draw[->] (2) to [out=45,in=170,looseness=1] node[above ,pos=0.5] {$2$} (3); 
\draw[->] (3) to [out=200,in=350,looseness=1] node[above ,pos=0.5] {$3$} (2);
\draw[->] (3) to [out=45,in=350,looseness=10] node[above,pos=0.5] {$0$} (3);  
\draw[->] (3) to [out=60,in=290,looseness=18] node[right,pos=0.5] {$1$} (3); 
\draw[->] (6) to [out=260,in=290,looseness=10] node[below,pos=0.5] {$0$} (6); 
\draw[->] (6) to [out=10,in=350,looseness=10] node[right,pos=0.5] {$1$} (6); 
\draw[->] (6) to [out=100,in=300,looseness=0.5] node[left,pos=0.5] {$2$} (4); 
\draw[->] (6) to [out=50,in=60,looseness=3] node[right,pos=0.5] {$3$} (2); 
\draw[->] (4) to [out=40,in=150,looseness=1.5] node[above,pos=0.5] {$0$} (5); 
\draw[->] (4) to [out=20,in=170,looseness=1] node[above,pos=0.5] {$1$} (5); 
\draw[->] (5) to [out=190,in=350,looseness=0.5] node[below,pos=0.5] {$0$} (4); 
\draw[->] (5) to [out=220,in=320,looseness=1] node[below,pos=0.3] {$1$} (4); 
\draw[->] (4) to [out=170,in=210,looseness=1] node[below,pos=0.3] {$2$} (1); 
\draw[->] (4) to [out=90,in=270,looseness=1] node[left,pos=0.4] {$3$} (2);
\draw[->] (5) to [out=90,in=270,looseness=1] node[left,pos=0.5] {$2$} (3);
\draw[->] (5) to [out=100,in=300,looseness=1] node[left,pos=0.5] {$3$} (2);
\end{tikzpicture} 
\end{equation}


In this example $(G'',L_{G''})$ is the following labeled graph.

\begin{equation*}
 \begin{tikzpicture}[node distance={35mm}, thick, main/.style = {draw, circle}] 
\node[main] (1) {$\{a\}$}; 
\node[main] (2) [right of=1] {$\{b\}$}; 
\node[main] (3) [right of=2] {$\{c\}$};
\node[main] (4) [below of=2] {$\{a,c\}$};
\node[main] (5) [right of=4] {$\{b,c\}$};
\node[main] (6) [below of=5] {$\{a,b,c\}$};
\node[main] (7) [below of=4] {$\{a,b\}$};
\draw[->] (1) to [out=170,in=200,looseness=20] node[left ,pos=0.5] {$2$} (1); 
\draw[->] (1) to [out=80,in=100,looseness=2] node[above ,pos=0.5] {$1$} (2); 
\draw[->] (1) to [out=45,in=150,looseness=1.5] node[above ,pos=0.5] {$0$} (2); 
\draw[->] (2) to [out=260,in=280,looseness=2] node[above ,pos=0.5] {$1$} (1);
\draw[->] (2) to [out=230,in=300,looseness=1.5] node[above ,pos=0.5] {$0$} (1); 
\draw[->] (2) to [out=45,in=170,looseness=1] node[above ,pos=0.5] {$2$} (3); 
\draw[->] (3) to [out=200,in=350,looseness=1] node[above ,pos=0.5] {$3$} (2);
\draw[->] (3) to [out=45,in=350,looseness=10] node[above,pos=0.5] {$0$} (3);  
\draw[->] (3) to [out=70,in=290,looseness=30] node[right,pos=0.5] {$1$} (3); 
\draw[->] (6) to [out=260,in=290,looseness=10] node[below,pos=0.5] {$0$} (6); 
\draw[->] (6) to [out=10,in=350,looseness=10] node[right,pos=0.5] {$1$} (6); 
\draw[->] (4) to [out=40,in=150,looseness=1.5] node[above,pos=0.5] {$0$} (5); 
\draw[->] (4) to [out=20,in=170,looseness=1] node[above,pos=0.5] {$1$} (5); 
\draw[->] (5) to [out=190,in=350,looseness=0.5] node[below,pos=0.5] {$0$} (4); 
\draw[->] (5) to [out=220,in=320,looseness=1] node[below,pos=0.3] {$1$} (4); 
\draw[->] (7) to [out=90,in=270,looseness=1] node[left,pos=0.5] {$2$} (4); 
\draw[->] (7) to [out=260,in=290,looseness=10] node[below,pos=0.5] {$0$} (7);
\draw[->] (7) to [out=170,in=200,looseness=10] node[left,pos=0.5] {$1$} (7);
\end{tikzpicture} 
\end{equation*}

We note that none of the two graphs, $\underline{G}$ or $G''$, is contained in the other, and that $X_{G''}$ does not factor onto the future cover $X_{\mathbb K(Y)}$ of $Y$. 
\end{ex}

In \cite{Th1} two strongly canonical covers $\mathbb K'(Y)$ and $\mathbb F'(Y)$ of a sofic shift $Y$ were introduced; albeit the second only for irreducible sofic subshifts. It can be checked that for the mixing sofic subshift $Y$ of Example \ref{08-01-25a} neither $X_{\mathbb F'(Y)}$ nor $X_{\mathbb K'(Y)}$ factor onto the future cover $X_{\mathbb K(Y)}$, justifying a claim made in the introduction.

\subsection{Completing the proof of the main result, Theorem \ref{18-01-26d}}

 A component $C$ in $\underline{G}$ is a \emph{source component} when it has the property that any path in $\underline{G}$ which terminates in $C$ must start in $C$. 

\begin{lemma}\label{20-01-26dx} Let $C$ be a source component in $\underline{G}$. Then
$$
\# L_G^{-1}\left(L_{\underline{G}}(x)\right) = M(C)
$$
for all $x \in X_C$.
\end{lemma}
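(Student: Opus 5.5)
The plan is to prove the two inequalities $\# L_G^{-1}(L_{\underline{G}}(x)) \geq M(C)$ and $\leq M(C)$ separately. Throughout, $y := L_{\underline{G}}(x)$, $x \in X_C$, and $D_k := s_{\underline{G}}(x_k)$, so $\# D_k = M(C)$ for all $k$.

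\emph{Lower bound.} First I would show that every edge $(D,a)$ of $\overline{G}$ lying in a component behaves like a bijection. Every element of $[D,a]$ is reached from some element of $D$ by an edge labeled $a$. Since $(G,L_G)$ is right-resolving, each element of $D$ contributes at most one vertex to $[D,a]$. Because $\#[D,a] = \# D$, each element of $D$ must emit exactly one $a$-edge, and these edges have distinct terminal vertices. In particular $x_k$ is an edge of $G''$, so $x \in X_{G''}$. Following these bijections along $x$ gives exactly $M(C)$ elements of $X_G$ in $N(x)$, all with label $y$ and pairwise different at every coordinate. Hence $\# L_G^{-1}(y) \geq M(C)$.

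\emph{Upper bound.} Suppose there is a ray $z \in L_G^{-1}(y) \setminus N(x)$. By right-resolvingness, if $s_G(z_k) \in D_k$ for some $k$, then $z$ coincides from time $k$ on with the element of $N(x)$ through $s_G(z_k)$. Two distinct rays with the same label separate at all sufficiently negative times, so there is a $K$ such that $s_G(z_k) \notin D_k$ for all $k \leq K$. Put $E_k := D_k \cup \{s_G(z_k)\}$ for $k \leq K$. Then $(E_k, y_k)$ is an edge in $\overline{G}$ from $E_k$ to $E_{k+1}$, and $\# E_k = M(C)+1$.

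By the pigeonhole principle I choose $j < j' \leq K$ with $x_j = x_{j'}$ and $E_j = E_{j'}$. This produces a loop $w$ labeled $y_{[j,j'-1]}$ which is simultaneously a loop in $C$ through $D_j$ and a loop in $\overline{G}$ through $E_j$. Let $p := (y_{[j,j'-1]})^\infty \in \Per(Y)$. The loop through $E_j$ in $\overline{G}$ gives $D^{\sigma^j(p)} \supseteq E_j \supsetneq D_j$.

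Now consider the element
$$
y^* := \sigma^{j}(p)_{(-\infty,-1]}\, \sigma^j(y)_{[0,\infty)} \in Y .
$$
Its $\alpha_G$-path $\alpha_G(y^*)$ lies in $\underline{G}$, and its vertices in the past are $D^{\sigma^{j+n}(p)}$, all of cardinality at least $M(C)+1$. So the past of $\alpha_G(y^*)$ lies outside $C$.

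The aim is then to use the fact that $C$ is a source component to obtain a contradiction. Here I would use Lemma \ref{25-03-26b}, taking the maximal $G''$-path dominated by a long segment of $\alpha_G(y^*)$. Together with Lemma \ref{10-02-25dxxz} applied to a vertex of $C$, this should show that some path in $\underline{G}$ starting outside $C$ terminates in $C$. Concretely, I expect the maximal dominated path to shrink the vertex sets from $D^{\sigma^{j}(p)}$ down to exactly the vertex sets $D_k$ of $x$ after finitely many steps. Lemma \ref{11-11-24axz} would then be used to see that the resulting concatenated path is genuinely in $\underline{G}$.

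This last step is where I expect the main difficulty. The hereditary structure of $\underline{G}$ only guarantees forward closure, so a path in $\overline{G}$ from a larger vertex down into $C$ must be shown to start at a vertex of $V_{\underline{G}}$. If this route fails, I would instead argue directly. By Lemma \ref{10-02-25dxxz} and the source property, every vertex $D$ of $C$ equals $D^{y'}$ for some $y'$ whose $\alpha_G$-past stays inside $C$. The aim would be to show that then $D_k = D^{\sigma^k(y)}$ for all $k$, i.e.\ $x = \alpha_G(y)$. That identity immediately forces every $z \in L_G^{-1}(y)$ to satisfy $s_G(z_k) \in D_k$, which gives the upper bound.
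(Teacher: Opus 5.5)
Your lower bound is correct. An edge of $\overline{G}$ inside a component induces a bijection $D_k \to D_{k+1}$, so $N(x)$ consists of $M(C)$ rays with label $y$; the paper takes this inequality for granted. Your setup of the upper bound is also fine up to the construction of $y^*$.

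\textbf{The gap.} It is exactly at the step you flagged. To contradict the source property you need an element of $Y$ whose $\alpha_G$-path has vertex sets of size at least $M(C)+1$ in the far past \emph{and eventually runs inside $C$}. Your $y^*$ has the right past, but its future is that of $y$, and nothing forces $\alpha_G(y^*)$ to enter $C$. Its vertex at time $k-j \geq 0$ is $[D^{\sigma^j(p)}, y_{[j,k-1]}]$. This set contains $D_k \cup \{s_G(z_k)\}$ and everything that survives from $D^{\sigma^j(p)} \setminus D_j$, and as far as your argument shows, $z$ may never merge into $N(x)$.

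Lemma \ref{25-03-26b} cannot produce the shrinking you expect. By its part (3), the maximal $G''$-path dominated by a segment has the same terminal vertex as the segment.

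The fallback is circular. The identity $x = \alpha_G(L_{\underline{G}}(x))$ for $x$ in a source component is Lemma \ref{23-04-26c}, which the paper deduces from the present lemma. Moreover, knowing that each $D_k$ equals $D^{y'}$ for \emph{some} $y'$ says nothing about $D^{\sigma^k(y)}$, which depends on the past of $y$.

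\textbf{The missing idea.} This is how the paper argues: control the \emph{future} by a periodic point living in $C$. By Lemma \ref{10-02-25dxxz} and the source property there is $q \in \Per(Y)$ with $\alpha_G(q) \in X_C$. By Corollary \ref{09-10-25b}, $\alpha_G(q) = \beta_G(q)$, so $\# L_G^{-1}(q) = M(C)$.

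Strong connectivity of $C$ gives $z' \in X_C$ with $z'_i = x_i$ for $i \le 0$ and $z'_i = \alpha_G(q)_i$ for $i \geq m$. If $y$ had more than $M(C)$ preimages, the vertex sets of $\alpha_G(L_{\underline{G}}(z'))$ would have at least $M(C)+1$ elements at all sufficiently negative times.

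On the other hand, the label of $z'$ is eventually $q$. Right-resolvingness and periodicity then make every ray in $L_G^{-1}(L_{\underline{G}}(z'))$ eventually coincide with one of the $M(C)$ rays in $L_G^{-1}(q)$. By Lemma \ref{08-10-25bxz}, the vertex sets of $\alpha_G(L_{\underline{G}}(z'))$ therefore eventually have at most $M(C)$ elements. They contain the sets $s_{\underline{G}}(z'_i)$ of size $M(C)$, so $\alpha_G(L_{\underline{G}}(z'))$ eventually coincides with $z'$ and runs in $C$. The source property now forces all of $\alpha_G(L_{\underline{G}}(z'))$ into $C$, which is a contradiction.

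Equivalently, you could keep the past of your $y^*$ and replace its future by the label of a path in $C$ from $D_j$ to the loop of $\alpha_G(q)$, followed by $q$. But then the extra ray and the pigeonhole on the $E_k$ are superfluous, since the past of $x$ itself already carries the large vertex sets.
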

\begin{proof} Since $C$ is a source component it follows from Lemma \ref{10-02-25dxxz} that there is a periodic point $p \in \Per (Y)$ such that $\alpha_G(p) \in X_C$. Since $\alpha_G(p) = \beta_G(p)$ by Corollary \ref{09-10-25b} it follows that $\# L_G^{-1}(p) = \# s_{\underline{G}}(\alpha_G(p)_j)$ for all $j \in \mathbb Z$ and hence that
\begin{equation}\label{20-01-26ex}
 \# L_G^{-1}(p) = M(C).
\end{equation} 
Assume for a contradiction that there is an $x \in X_C$ such that 
\begin{equation}\label{20-01-26fx}
\# L_G^{-1}(L_{\underline{G}}(x)) > M(C). 
\end{equation}
Since $C$ is strongly connected there is an element $z \in X_C$ and an $m \in \mathbb N$ such that $z_i = x_i$ for $i \leq 0$ while $z_i = \alpha_G(p)_i$ for $i \geq m$. It follows from \eqref{20-01-26fx} and the definition of $\alpha_G$ that for some $N \leq 0$
\begin{equation}\label{20-01-26gx}
\# s_{\underline{G}}(\alpha_G(L_{\underline{G}}(z))_i) \geq M(C)+1
\end{equation} 
for all $i \leq N$. Since $L_{\underline{G}}(z)_i = p_i$ for $i \geq m$, it follows from \eqref{20-01-26ex} and the periodicity of $p$ that $\#s_{\underline{G}}(\alpha_G(L_{\underline{G}}(z))_i) \leq M(C)$ for all sufficiently large $i$. However, since $s_{\underline{G}}(\alpha_G(L_{\underline{G}}(z))_i)  \supseteq s_{\underline{G}}(z_i)$ and $\# s_{\underline{G}}(z_i) = M(C)$ for all $i$, we find that $s_{\underline{G}}(\alpha_G(L_{\underline{G}}(z))_i)  =s_{\underline{G}}(z_i)$ and hence also that $\alpha_G(L_{\underline{G}}(z))_i = z_i$ for all sufficiently large $i$. Since $C$ is a source component it follows that $s_{\underline{G}}(\alpha_G(L_{\underline{G}}(z))_i) \in V_C$ for all $i$; a contradiction to \eqref{20-01-26gx}. Hence $\# L_G^{-1}(L_{\underline{G}}(x)) = M(C)$ for all $x \in X_C$. 
\end{proof}

\begin{cor}\label{20-01-26h} Let $C,C'$ be source components in $\underline{G}$. Then
\begin{itemize}
\item[1)] $z = \beta_G(L_{\underline{G}}(z))$ for all $z \in X_C$,
\item[2)] $L_{\underline{G}} : X_C \to Y$ is injective and hence a conjugacy of $X_C$ onto $L_{\underline{G}}(X_C)$, and
\item[3)] $L_{\underline{G}}(X_C) \cap L_{\underline{G}}(X_{C'}) = \emptyset$ if $C \neq C'$.
\end{itemize}
\end{cor}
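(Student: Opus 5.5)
The plan is to derive all three statements from Lemma \ref{20-01-26dx}, which gives $\# L_G^{-1}(L_{\underline{G}}(z)) = M(C)$ for every $z \in X_C$. We use this together with the inclusion $s_{G''}(\beta_G(y)_k) \subseteq s_{\underline{G}}(\alpha_G(y)_k)$ from the proof of Lemma \ref{08-10-25bxz}.

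For 1), fix $z \in X_C$ and put $y := L_{\underline{G}}(z)$. The first step is to show $z = \alpha_G(y)$. Because $C$ is a source component, the argument at the end of the proof of Lemma \ref{20-01-26dx} applies. Every vertex $s_{\underline{G}}(z_i)$ is a set of $G$-vertices reached by left-infinite paths labeled $y_{(-\infty,i-1]}$, and this set has cardinality $M(C)$. Hence $s_{\underline{G}}(\alpha_G(y)_i) \supseteq s_{\underline{G}}(z_i)$. To see this, take a left-infinite path in $C$ ending at $z_{i-1}$, namely $z_{(-\infty,i-1]}$ itself. Each $v\in s_{\underline{G}}(z_i)$ is reached from $s_{\underline{G}}(z_j)$ for every $j<i$, and a compactness argument yields a left-infinite $G$-path labeled $y_{(-\infty,i-1]}$ ending at $v$. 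On the other hand $\# s_{\underline{G}}(\alpha_G(y)_i) \le \# L_G^{-1}(y)$ for $i$ very negative. This holds because $(G,L_G)$ is right-resolving: distinct left-infinite paths ending at distinct vertices extend to distinct rays, at least for the limit set $D^y$ stabilized as $i\to -\infty$. Since $\# L_G^{-1}(y) = M(C)$, we get equality $s_{\underline{G}}(\alpha_G(y)_i) = s_{\underline{G}}(z_i)$ for $i$ sufficiently negative. Right-resolvingness of $\underline{G}$ then gives $z = \alpha_G(y)$ everywhere. Next, $\# N(\beta_G(y)_k) = \# L_G^{-1}(y) = M(C)$, and right-resolvingness of $G$ makes the $\# L_G^{-1}(y)$ rays pass through distinct vertices at every coordinate. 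So $s_{G''}(\beta_G(y)_k)$ has $M(C)$ elements and is contained in $s_{\underline{G}}(\alpha_G(y)_k)$, which also has $M(C)$ elements. The two vertices coincide for all $k$, and since both rays carry label $y$ and the graphs are right-resolving, $\beta_G(y) = \alpha_G(y) = z$.

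Statement 2) is then immediate from 1): if $L_{\underline{G}}(z) = L_{\underline{G}}(z')$ for $z,z' \in X_C$, then $z = \beta_G(L_{\underline{G}}(z)) = \beta_G(L_{\underline{G}}(z')) = z'$. A continuous injective shift-commuting map on a compact space is a conjugacy onto its image. For 3), suppose $y = L_{\underline{G}}(z) = L_{\underline{G}}(z')$ with $z\in X_C$ and $z' \in X_{C'}$. Then by 1), $z = \beta_G(y) = z'$, so $z_0$ lies in both $E_C$ and $E_{C'}$. Distinct components have disjoint edge sets, so $C = C'$.

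The main obstacle is the first step of 1), namely the identification $z=\alpha_G(y)$. The clean route is to reuse the end of the proof of Lemma \ref{20-01-26dx} directly: there, for an element of $X_C$, it is shown that $\alpha_G(L_{\underline{G}}(\cdot))$ agrees with it eventually, and the source property forces agreement at all coordinates. If that is awkward, I would instead compare cardinalities via $\# L_G^{-1}(y) = M(C)$ as sketched above.
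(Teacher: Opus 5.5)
Your arguments for 2) and 3) are fine once 1) is known, but your proof of 1) rests on two general claims about right-resolving presentations that are false.

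\textbf{First false claim.} You assert $\# s_{\underline{G}}(\alpha_G(y)_i)\le \# L_G^{-1}(y)$ for $i\ll 0$, because left-infinite paths ``extend to distinct rays''. A left-infinite path labeled $y_{(-\infty,i-1]}$ need not continue along $y_{[i,\infty)}$ at all. In general the inequality goes the other way: for $i\ll 0$ the finitely many rays of $L_G^{-1}(y)$ pass through distinct vertices of $D^{\sigma^i(y)}$. For instance, let $G$ have vertices $a,b$, loops at $a$ labeled $0$ and $1$, a loop at $b$ labeled $0$, and an edge $b\to a$ labeled $2$. For $y$ with $y_i=0$ for $i<0$ and $y_i=1$ for $i\ge 0$, one has $D^{\sigma^i(y)}=\{a,b\}$ for all $i\le 0$, while $L_G^{-1}(y)$ is a single ray.

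\textbf{Second false claim.} You claim that right-resolvingness makes the rays of $L_G^{-1}(y)$ pass through distinct vertices at every coordinate, so that $\# N(\beta_G(y)_k)=\# L_G^{-1}(y)$. Right-resolving only forces two equally labeled rays that meet to coincide from then on. They can merge, so $\# s_{G''}(\beta_G(y)_k)=\# L_G^{-1}(y)_{[k,\infty)}$ can be strictly smaller than $\# L_G^{-1}(y)$.

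Both conclusions do hold in the present situation, but only as consequences of the corollary itself; the reason you give does not establish them. Your fallback of reusing the end of the proof of Lemma \ref{20-01-26dx} does not cover the gap either. The eventual agreement of $\alpha_G(L_{\underline{G}}(\cdot))$ with the element was obtained there for a specially built element whose future is $\alpha_G(p)$ with $p$ periodic, not for an arbitrary $z\in X_C$.

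\textbf{The missing idea} is the homogeneity of paths in a component, which is what makes the corollary follow readily from Lemma \ref{20-01-26dx}. If $(D,a)$ is an edge of $C$, then $\#[D,a]=\#D=M(C)$. Since $G$ is right-resolving, each vertex of $D$ contributes at most one element to $[D,a]$. Hence every vertex of $D$ emits an edge labeled $a$, so the edge lies in $G''$, and these edges have distinct terminal vertices. Thus $X_C\subseteq X_{G''}$, and for $z\in X_C$ with $y=L_{\underline{G}}(z)$ the set $N(z)\subseteq L_G^{-1}(y)$ has at least $\# N(z_0)=M(C)$ elements. Lemma \ref{20-01-26dx} gives $\# L_G^{-1}(y)=M(C)$, so $N(z)=L_G^{-1}(y)=N(\beta_G(y))$, i.e.\ $z=\beta_G(y)$; no detour through $\alpha_G$ is needed.

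Your route via $\alpha_G$ can also be repaired, as follows.
\begin{enumerate}
\item By Lemma \ref{08-10-25bxz}, $\alpha_G(y)$ and $\beta_G(y)$ agree eventually.
\item We have $\# s_{G''}(\beta_G(y)_i)\le \# L_G^{-1}(y)=M(C)\le \# s_{\underline{G}}(\alpha_G(y)_i)$, so $\alpha_G(y)$ eventually coincides with $z$.
\item The source property then forces $\alpha_G(y)=z$.
\item Finally, $\# L_G^{-1}(y)_{[k,\infty)}$ is non-increasing in $k$, bounded by $M(C)$, and eventually equal to $M(C)$. Hence $\beta_G(y)=z$.
\end{enumerate}
As written, however, these steps are not justified.
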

\begin{proof} This follows readily from Lemma \ref{20-01-26dx}.
\end{proof}

\begin{lemma}\label{16-02-26d} For each $y \in Y$ there is a $z\in Y$ backward asymptotic to $y$ such that $\alpha_G(y)$ is backward asymptotic to $\beta_G(z)$.
\end{lemma}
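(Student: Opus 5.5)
The plan is to find a right tail $w$ such that every vertex of $s_{\underline{G}}(\alpha_G(y)_j)$, for $j$ far to the left, starts a path in $G$ whose label is $y_{[j,N-1]}w$. The construction is to take $z := y_{(-\infty,N-1]}w$, with $w$ the infinite repetition of a loop word. The loop is read off from the left tail of $\alpha_G(y)$ itself, as in the proof of Lemma \ref{10-02-25dxxz}.

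First, set $D_j := s_{\underline{G}}(\alpha_G(y)_j)$. Since $(G,L_G)$ is right-resolving, $D_{j+1} = [D_j,y_j]$ implies $\# D_{j+1} \leq \# D_j$. Hence there is an $N_0$ such that $\# D_j = \# D_{N_0}$ for all $j \leq N_0$. For such $j<N_0$ the map $D_j \to D_{j+1}$ is well defined: it sends $v$ to the terminal vertex of the unique edge labeled $y_j$ leaving $v$. This map is surjective, and because the cardinalities agree it is a bijection; in particular every vertex of $D_j$ emits an edge labeled $y_j$.

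By the pigeonhole principle there are $n_1 < n_0 \leq N_0$ with $D_{n_1} = D_{n_0} =: D$. Set $u := y_{[n_1,n_0-1]}$. Composing the bijections above gives a permutation $\pi$ of $D$, such that every $v\in D$ starts a path labeled $u$ ending at $\pi(v)$. Choose $r\geq 1$ with $\pi^r = \id$, e.g.\ $r=(\#D)!$. Then every $v \in D$ starts a path labeled $u^r$ ending at $v$. Hence every $v \in D$ starts a right-infinite path in $G$ labeled $(u^r)^\infty$.

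Now define $z \in A^{\mathbb Z}$ by $z_{(-\infty,n_0-1]} := y_{(-\infty,n_0-1]}$, followed from coordinate $n_0$ on by the right-infinite word $u^ru^ru^r\cdots$. Then $z \in Y$, since any vertex of $D$ carries a left-infinite path labeled $y_{(-\infty,n_0-1]}$ by the definition of $D^{\sigma^{n_0}(y)}$, and by construction $z$ is backward asymptotic to $y$.

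It remains to compare $\beta_G(z)$ with $\alpha_G(y)$ at coordinates $j<n_0$. Let $j<n_0$ and $v \in D_j$. Following the bijections, $v$ starts a path labeled $y_{[j,n_0-1]}$ ending in $D$. That path continues with label $(u^r)^\infty$. Since $D_j=D^{\sigma^j(y)}$ is defined via left-infinite paths, $v$ is also the end of a left-infinite path labeled $y_{(-\infty,j-1]}$. Concatenating gives an element of $L_G^{-1}(z)$ passing through $v$ at time $j$. Therefore $s_{G''}(\beta_G(z)_j) \supseteq D_j$. The reverse inclusion holds in general, as noted in the proof of Lemma \ref{08-10-25bxz}, because $z$ and $y$ agree up to $j-1$. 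The edge sets also coincide: $N(\beta_G(z)_j)$ consists of the $G$-edges labeled $z_j=y_j$ starting in $D_j$, which is exactly the arrow $\alpha_G(y)_j=(D_j,y_j)$. Hence $\beta_G(z)_j = \alpha_G(y)_j$ for all $j<n_0$.

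The main obstacle is precisely the need for a single future that is admissible from every vertex of $D_j$ simultaneously. Because the maps $D_j\to D_{j+1}$ are bijections, losing even one vertex at a later time would lose one at every earlier time. The permutation-power trick on a recurrent vertex $D$ of the left tail of $\alpha_G(y)$ is what resolves this.
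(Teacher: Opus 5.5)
Your proof is correct and is essentially the paper's argument. The paper observes that the left tail of $\alpha_G(y)$ lies in a component $C$ of $\underline{G}$ and splices it with an arbitrary $w \in X_C$ to get $z' \in X_C$ and $z := L_{\underline{G}}(z')$; your periodic continuation through the recurrent vertex $D$ is one such choice of $w$. It then concludes from $\alpha_G(y)_j = z'_j \subseteq \beta_G(z)_j \subseteq \alpha_G(z)_j = \alpha_G(y)_j$, where the fact that paths in a component of $\overline{G}$ lie in $G''$ replaces your explicit bijections $D_j \to D_{j+1}$.

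Passing to $\pi^r = \id$ is unnecessary, since $\pi$ maps $D$ onto $D$ and so every $v \in D$ already starts a path labeled $u^\infty$. What resolves the obstacle is the bijectivity of the transition maps. Strictly, you get that bijectivity from surjectivity, the right-resolving property and $\# D_j = \# D_{j+1}$, before the map is known to be defined on all of $D_j$.
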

\begin{proof} There is a component $C$ in $\underline{G}$ such that $\alpha_G(y)_j \in E_C$ for all $j \leq N$. Choose an element $w\in X_C$ such that $w_N = \alpha_G(y)_N$ and set
$$
z'_j = \begin{cases} \alpha_G(y)_j, \ j \leq N, \\ w_j, \ j > N. \end{cases}
$$
Set $z := L_{\underline{G}}(z')$. Then $z_j = y_j$ for $j \leq N$; in particular, $z$ is backward asymptotic to $y$. Furthermore, $\alpha_G(y)_j = z'_j \subseteq \beta_G(L_{\underline{G}}(z'))_j \subseteq \alpha_G(L_{\underline{G}}(z'))_j = \alpha_G(y)_j$ for $j \leq N$, proving that $\beta_G(z)_j = \alpha_G(y)_j$ for $j \leq N$ and that $\alpha_G(y)$ is backward asymptotic to $\beta_G(z)$.

\end{proof}


We are now ready for \emph{the proof of Theorem \ref{18-01-26d}:} It follows from Theorem \ref{25-03-26} that there is a conjugacy $\overline{\phi} : X_{\overline{G}} \to X_{\overline{H}}$ with the properties specified in (1) and (2) of that theorem. To show that $\overline{\phi}$ takes $X_{\underline{G}}$ onto $X_{\underline{H}}$ the first step is to show that 
\begin{equation}\label{26-04-26a}
\overline{\phi} \circ \alpha_G = \alpha_H \circ \psi.
\end{equation} 
Let $y \in Y$. By Lemma \ref{16-02-26d} there is a $z\in Y$ backward asymptotic to $y$ such that $\alpha_G(y)$ is backward asymptotic to $\beta_G(z)$. Hence $\overline{\phi}(\alpha_G(y))$ is backward asymptotic to $\overline{\phi}(\beta_G(z))$. Since $\beta_G(z)$ is backward asymptotic to an element of $\overline{\Per (X_{\underline{G}})}$ where $\overline{\phi}$ agrees with $\phi''$, we see that $\overline{\phi}(\alpha_G(y))$ is backward asymptotic to $\phi''(\beta_G(z)) = \beta_H(\psi(z))$, where the last equality comes from \eqref{12-02-26d}. Since $\beta_H(\psi(z))_j \subseteq \alpha_H(\psi(z))_j$ for all $j \in \mathbb Z$, it follows that $\overline{\phi}(\alpha_G(y))_j \subseteq \alpha_H(\psi(z))_j = \alpha_H(\psi(y))_j$ for all $j$ 'close to $-\infty$'. By symmetry $(\overline{\phi})^{-1}(\alpha_H(\psi(y)))_j \subseteq \alpha_G(y)_j$ for all $j$ 'close to $-\infty$' and hence
$$
\alpha_H(\psi(y))_j = \overline{\phi}((\overline{\phi})^{-1}(\alpha_H(\psi(y))))_j \subseteq \overline{\phi}(\alpha_G(y))_j
$$
for $j$ 'close to $-\infty$'. We conclude that
$$
\alpha_H(\psi(y))_j  = \overline{\phi}(\alpha_G(y))_j
$$
for $j$ 'close to $-\infty$'. Since $L_{\overline{H}}(\overline{\phi}(\alpha_G(y))) = \psi(y) =L_{\overline{H}}(\alpha_H(\psi(y)))$ we conclude that
$\overline{\phi}(\alpha_G(y)) =\alpha_H(\psi(y))$, proving \eqref{26-04-26a}.

 It follows from Lemma \ref{10-02-25dxxz} that elements backward asymptotic to an element of $\alpha_G(\Per(Y))$ are dense in $X_{\underline{G}}$. It follows from \eqref{26-04-26a} that $\overline{\phi}$ maps such elements to elements of $X_{\overline{H}}$ that are backward asymptotic to elements from $\alpha_H(\Per(Z)) \subseteq X_{\underline{H}}$. Hence $\overline{\phi}(X_{\underline{G}}) \subseteq X_{\underline{H}}$. Using this conclusion on $\overline{\phi}^{-1}$ we find that $\overline{\phi}(X_{\underline{G}}) = X_{\underline{H}}$. Set $\underline{\phi}:= \overline{\phi}|_{X_{\underline{G}}}$. Then the commutativity of \eqref{18-01-26e} follows from Theorem \ref{25-03-26}.

To prove uniqueness of $\underline{\phi}$ assume that a conjugacy $\lambda : X_{\underline{G}} \to X_{\underline{H}}$ can substitute for $\overline{\phi}$ in the diagram \eqref{18-01-26e}. Let $x \in X_{\underline{G}}$. If $x \in X_C$ for some source component $C$ in $\underline{G}$ we have that $\lambda(x) \in X_{C_1}$ and $\underline{\phi}(x) \in X_{C_2}$ for some source components $C_1, C_2$ in $\underline{H}$ and $L_{\underline{H}}(\lambda(x)) = \psi(L_{\underline{G}}(x)) = L_{\underline{H}}(\underline{\phi}(x))$. By Corollary \ref{20-01-26h} this implies that $\lambda(x) = \underline{\phi}(x)$. Since $(\underline{H},L_{\underline{H}})$ is right-resolving it follows readily that the same holds when $x$ is backward asymptotic to a source component in $\underline{G}$. Since every element of $X_{\underline{G}}$ can be approximated by elements that are backward asymptotic to a source component in $\underline{G}$ it follows by continuity that $\lambda = \underline{\phi}$.

To complete the proof it remains only to establish the commutativity of \eqref{20-01-25d}. The lower commuting square in \eqref{20-01-25d} comes from Krieger's theorem, Theorem 2.11 in \cite{Th1}, so it remains only to show that $\psi_{\mathbb K} \circ f_{\underline{G}} = f_{\underline{H}} \circ \underline{\phi}$. For this note that it follows from b) in Lemma \ref{09-10-25fz} that $\psi_{\mathbb K} \circ f_{\underline{G}}(x) = f_{\underline{H}} \circ \underline{\phi}(x)$ when $x \in \overline{\Per (X_{\underline{G}})}$. By Krieger's theorem $\psi \circ L_{\mathbb K(Y)} = L_{\mathbb K(Z)} \circ \psi_{\mathbb K}$, and hence
$$
L_{\mathbb K(Z)} \circ \psi_{\mathbb K} \circ f_{\underline{G}} = \psi \circ L_{\mathbb K(Y)}  \circ f_{\underline{G}} = \psi \circ L_{\underline{G}} = L_{\underline{H}} \circ \underline{\phi} = L_{\mathbb K(Z)} \circ f_{\underline{H}} \circ \underline{\phi}. 
$$
As $(\mathbb K(Z),L_{\mathbb K(Z)})$ is right-resolving and every element of $X_{\underline{G}}$ is backward asymptotic to an element of $ \overline{\Per ( X_{\underline{G}})}$, it follows from this that $\psi_{\mathbb K} \circ f_{\underline{G}} = f_{\underline{H}} \circ \underline{\phi}$. 
\qed

\begin{rem}\label{27-04-26} We note, for the record, that by construction the restriction of $\underline{\phi}$ to $X_{G''} \cap X_{\underline{G}}$ agrees with $\phi''$, and
$\underline{\phi}(X_{G''}  \cap X_{\underline{G}}) =  X_{H''}  \cap X_{\underline{H}}$. 
In particular, $\underline{\phi} = \phi'' = \overline{\phi}$ on $\overline{\Per(X_{\underline{G}})}$.
\end{rem}

\section{The extended future cover}

For a sofic subshift $Y$, set
$$
(\underline{\mathbb K}(Y),L_{\underline{\mathbb K}(Y)})  := (\underline{\mathbb K(Y)}, L_{\underline{\mathbb K(Y)}}).
$$
This is what we will call the extended future cover of $Y$.

By Krieger's theorem, as formulated in Theorem 2.11 of \cite{Th1}, and by Theorem \ref{18-01-26d} above, the extended future cover defines a canonical cover for sofic subshifts; strongly canonical in the sense of \cite{Th1}:

\begin{thm}\label{21-01-26} Let $\psi : Y \to Z$ be a conjugacy of sofic subshifts.  There is a unique conjugacy $\psi_{\underline{\mathbb K}} : X_{\underline{\mathbb K}(Y)} \to X_{\underline{\mathbb K}(Z)}$ such that
\begin{equation*}
\xymatrix{
 X_{\underline{\mathbb K}(Y)} \ar[r]^-{\psi_{\underline{\mathbb K}}} \ar[d]_{L_{\underline{\mathbb K}(Y)}}& X_{\underline{\mathbb K}(Z)} \ar[d]^-{L_{\underline{\mathbb K}(Z)}}\\
   Y \ar[r]^{\psi} & Z}
\end{equation*}
commutes. This conjugacy has the property that also
\begin{equation}\label{20-01-25dx}
\xymatrix{
 X_{\underline{\mathbb K}(Y)} \ar[r]^-{\psi_{\underline{\mathbb K}}} \ar[d]_{f_{\K(Y)}}& X_{\underline{\mathbb K}(Z)} \ar[d]^-{f_{\K(Z)}}\\
 X_{\mathbb K(Y)} \ar[d]_-{L_{\mathbb K(Y)}} \ar[r]^-{\psi_{\mathbb K}} &  X_{\mathbb K(Z)} \ar[d]^-{L_{\mathbb K(Z)}} \\
Y \ar[r]_-{\psi}  & Z }
\end{equation}
and
\begin{equation}\label{04-08-26}
\xymatrix{
 X_{\underline{\mathbb K}(Y)} \ar[r]^-{\psi_{\underline{\mathbb K}}} & X_{\underline{\mathbb K}(Z)}\\
   Y \ar[u]^-{\alpha_{\mathbb K(Y)}} \ar[r]^{\psi} & Z\ar[u]_-{\alpha_{\mathbb K(Z)}}}
\end{equation}
commute.
\end{thm}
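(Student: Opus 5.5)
The plan is to obtain Theorem \ref{21-01-26} as a direct application of Theorem \ref{18-01-26d} with $(G,L_G) := (\mathbb K(Y),L_{\mathbb K(Y)})$ and $(H,L_H) := (\mathbb K(Z),L_{\mathbb K(Z)})$, combined with Krieger's theorem (Theorem 2.11 in \cite{Th1}).

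\emph{Checking the hypotheses.} The future covers are right-resolving presentations of $Y$ and $Z$. If $\mathbb K(Y)$ contains sources or sinks, I will note that the construction of $\underline{G}$ only depends on left-infinite paths, so nothing changes. Krieger's theorem supplies the conjugacy $\psi_{\mathbb K}: X_{\mathbb K(Y)} \to X_{\mathbb K(Z)}$ with $L_{\mathbb K(Z)} \circ \psi_{\mathbb K} = \psi \circ L_{\mathbb K(Y)}$. This is exactly the commuting square \eqref{09-10-25axx} with $\phi := \psi_{\mathbb K}$.

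\emph{Applying Theorem \ref{18-01-26d}.} Set $\psi_{\underline{\mathbb K}} := \underline{\phi}$. Existence and uniqueness of a conjugacy making the top square commute then follow from \eqref{18-01-26e} and the uniqueness part of Theorem \ref{18-01-26d}; that uniqueness only refers to the diagram with $\psi$, not to $\phi$. The diagram \eqref{09-02-26} gives \eqref{04-08-26} with $\alpha_G = \alpha_{\mathbb K(Y)}$. The diagram \eqref{20-01-25d} gives \eqref{20-01-25dx}, except that the middle row there is Krieger's $\psi_{\mathbb K}$ for $Y$, which is the same map here.

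\emph{Main point of care.} The identification $X_{\mathbb K(Y)} = X_{[\underline{\mathbb K(Y)}]}$ must be done via Corollary \ref{04-09-25d} applied with $G = \mathbb K(Y)$. Under it the vertex $f_{\underline{\mathbb K(Y)}}(D^y)$ corresponds to $F(y)$, by Proposition \ref{07-02-26a}. Consequently the map $f_{\K(Y)}$ in \eqref{20-01-25dx} is the merging factor map of Corollary \ref{24-07-26b}, and the bottom square is the same Krieger square in both diagrams. This bookkeeping is the only nontrivial step; everything else is a direct quotation of earlier results.
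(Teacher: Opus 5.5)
Your proposal is correct and is exactly how the paper obtains Theorem \ref{21-01-26}. The paper applies Theorem \ref{18-01-26d} to $(G,L_G)=(\mathbb K(Y),L_{\mathbb K(Y)})$, $(H,L_H)=(\mathbb K(Z),L_{\mathbb K(Z)})$ and $\phi=\psi_{\mathbb K}$ from Krieger's theorem, using the identification of Corollary \ref{04-09-25d}. Your hedge about sources and sinks is unnecessary, since each vertex $F(y)$ of $\mathbb K(Y)$ receives an edge from $F(\sigma^{-1}(y))$ and emits one to $F(\sigma(y))$. Your claim that \emph{nothing changes} would in any case not be automatic, because the proof of Lemma \ref{15-02-26cx} uses that $G$ has neither sources nor sinks.
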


We note that by Lemma \ref{27-01-26b} the right-inverse $\alpha_{\mathbb K(Y)} : Y \to X_{\underline{\mathbb K}(Y)}$ to the labeling $L_{\underline{\mathbb K}(Y)} :  X_{\underline{\mathbb K}(Y)} \to Y$ is a lift of the right-inverse $\alpha_Y : Y \to X_{\mathbb K(Y)}$ to the labeling  $L_{{\mathbb K}(Y)} :  X_{{\mathbb K}(Y)} \to Y$ in the sense that
$$
f_{\K(Y)} \circ \alpha_{\mathbb K(Y)} = \alpha_Y.
$$

\begin{ex}\label{21-02-26} In many cases the map $ f_{\K(Y)}$ in \eqref{20-01-25dx} is a conjugacy because $(\underline{\mathbb K}(Y),L_{\underline{\mathbb K}(Y)})$ turns out to be follower-separated, and then $(\underline{\mathbb K}(Y),L_{\underline{\mathbb K}(Y)}) = ({\mathbb K}(Y),L_{{\mathbb K}(Y)})$.
This is for example the case when $Y$ is the even shift which was considered in Example 4.9 of \cite{Th1} as well as in Example \ref{08-01-25a} above, so it seems appropriate to exhibit an example where this is not the case.
The following labeled graph is irreducible, right-resolving and follower-separated and hence the minimal right-resolving graph of the sofic subshift $Y$ it presents. Note that $Y$ is mixing.

\begin{equation}\label{02-11-25d}
 \begin{tikzpicture}[node distance={35mm}, thick, main/.style = {draw, circle}] 
\node[main] (1) {$a$}; 
\node[main] (2) [right of=1] {$b$}; 
\draw[->] (1) to [out=170,in=200,looseness=20] node[left ,pos=0.5] {$1$} (1);
\draw[->] (1) to [out=250,in=290,looseness=20] node[below ,pos=0.5] {$2$} (1);
\draw[->] (1) to [out=300,in=200,looseness=1] node[above ,pos=0.5] {$0$} (2);
\draw[->] (2) to [out=120,in=70,looseness=1] node[above ,pos=0.5] {$1$} (1);
\draw[->] (2) to [out=250,in=290,looseness=20] node[below ,pos=0.5] {$2$} (2);
 \end{tikzpicture} 
\end{equation}
The graph $(\underline{\mathbb K}(Y),L_{\underline{\mathbb K}(Y)})$ is the  following.

\begin{equation*}
 \begin{tikzpicture}[node distance={35mm}, thick, main/.style = {draw, circle}] 
\node[main] (1) {$\{a\}$}; 
\node[main] (2) [right of=1] {$\{b\}$};
\node[main] (3) [below of=1] {$\{a,b\}$}; 
\draw[->] (1) to [out=170,in=200,looseness=20] node[left ,pos=0.5] {$1$} (1);
\draw[->] (1) to [out=250,in=290,looseness=10] node[below ,pos=0.5] {$2$} (1);
\draw[->] (1) to [out=300,in=200,looseness=1] node[above ,pos=0.5] {$0$} (2);
\draw[->] (2) to [out=120,in=70,looseness=1] node[above ,pos=0.5] {$1$} (1);
\draw[->] (2) to [out=250,in=290,looseness=10] node[below ,pos=0.5] {$2$} (2);
\draw[->] (3) to [out=250,in=290,looseness=10] node[below ,pos=0.5] {$2$} (3);
\draw[->] (3) to [out=110,in=220,looseness=1] node[left ,pos=0.5] {$1$} (1);
\draw[->] (3) to [out=50,in=220,looseness=1] node[left ,pos=0.5] {$0$} (2);
 \end{tikzpicture} 
\end{equation*}
Here the vertex $\{a,b\}$ has the same follower set as $\{a\}$ and the merged graph of $(\underline{\mathbb K}(Y),L_{\underline{\mathbb K}(Y)})$, which is the future cover $(\mathbb K(Y),L_{\mathbb K(Y)})$ by Corollary \ref{04-09-25d}, is the graph \eqref{02-11-25d} we started from.

This example of a mixing sofic subshift whose extended future cover is larger than the future cover itself seems to be among the 'smallest' such examples. With larger graphs and more labels the difference can be much bigger. To illustrate this, and at the same time give an example of a mixing sofic subshift for which the minimal right-resolving presentation, the future cover and the extended future cover are all different, we invite the reader to check that this is the case when $Y$ is the sofic subshift whose minimal right-resolving presentation is given by the following labeled graph.

\begin{equation*}\label{02-11-24e}
 \begin{tikzpicture}[node distance={35mm}, thick, main/.style = {draw, circle}] 
\node[main] (1) {$a$}; 
\node[main] (2) [right of=1] {$b$};
\node[main] (3) [below of=1] {$c$}; 
\node[main] (4) [below of=2] {$d$};
\draw[->] (1) to [out=130,in=90,looseness=20] node[left ,pos=0.5] {$1$} (1);
\draw[->] (1) to [out=0,in=180,looseness=1] node[above ,pos=0.5] {$0$} (2);
\draw[->] (2) to [out=120,in=60,looseness=1] node[above ,pos=0.5] {$1$} (1); 
\draw[->] (1) to [out=200,in=250,looseness=10] node[left ,pos=0.5] {$2$} (1); 
\draw[->] (1) to [out=270,in=110,looseness=1] node[left ,pos=0.5] {$3$} (3); 
\draw[->] (3) to [out=80,in=290,looseness=1] node[right ,pos=0.5] {$3$} (1); 
\draw[->] (2) to [out=200,in=250,looseness=10] node[left ,pos=0.5] {$2$} (2); 
\draw[->] (2) to [out=270,in=110,looseness=1] node[left ,pos=0.5] {$3$} (4); 
\draw[->] (4) to [out=80,in=290,looseness=1] node[right ,pos=0.5] {$3$} (2); 
\end{tikzpicture} 
\end{equation*}

\end{ex}

\section{Desynchronization of sofic subshifts}
Let $Y$ be a sofic subshift. Recall that a word $w \in \mathbb W(Y)$ is \emph{synchronizing} when 
$u,v \in \mathbb W(Y), \ uw,wv \in \mathbb W(Y) \ \Rightarrow \ uwv \in \mathbb W(Y)$. We let $\sync(Y)$ denote the set of elements of $Y$ that contain a synchronizing word for $Y$, and set
$$
\eth Y := Y \backslash \sync(Y). 
$$
These sets, $\sync(Y)$ and $\eth Y$, are shift-invariant with $\sync(Y)$ open and $\eth Y$ closed. We call $\eth Y$ the \emph{desynchronization} of $Y$. It is well-known that $\sync(Y) \neq \emptyset$ when $Y$ is irreducible and hence that $\eth Y$ is a proper subshift of $Y$ in this case. It is also known from Theorem 6.6 of \cite{Th0} that $\eth Y$ is sofic when $Y$ is irreducible. The first goal here is to extend both conclusions to a general sofic subshift.

Let $(H,L_H)$ be a right-resolving labeled graph presenting the sofic subshift $Y$. Inspired by Definition 9.1.4 in \cite{LM} we say that a word $w \in \mathbb W(Y)$ is \emph{magic} for $(H,L_H)$ when all paths $\gamma$ in $H$ with $L_H(\gamma) = w$ terminate at the same vertex in $H$. The notion of a magic word is closely related to that of a synchronizing word. 

\begin{lemma}\label{20-12-25x} Let $(H,L_H)$ be a right-resolving, regular and follower-separated presentation of the sofic subshift $Y$. A word $w \in \mathbb W(Y)$ is synchronizing for $Y$ if and only if it is magic for $(H,L_H)$.
\end{lemma}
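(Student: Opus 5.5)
We prove the two implications separately. The properties of $(H,L_H)$ enter as follows: right-resolving lets follower sets propagate along labels, follower-separation turns equality of follower sets into equality of vertices, and regularity lets us realise each vertex's follower set as the future set $F(y)$ of an actual point of $Y$.

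\emph{Magic implies synchronizing.} Suppose every path labeled $w$ ends at one vertex $v_w$. Let $u,v\in\mathbb W(Y)$ with $uw,wv\in\mathbb W(Y)$. Since $H$ has no sources or sinks and $(H,L_H)$ presents $Y$, we can choose a path $\gamma$ in $H$ with $L_H(\gamma)=uw$; its $w$-part ends at $v_w$. Likewise choose a path $\gamma'$ with $L_H(\gamma')=wv$; its $w$-part also ends at $v_w$, so the $v$-part of $\gamma'$ starts at $v_w$. Concatenating $\gamma$ with the $v$-part of $\gamma'$ gives a path labeled $uwv$. Extending it to a bi-infinite ray, which is possible because $H$ has no sources or sinks, shows $uwv\in\mathbb W(Y)$. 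This direction uses only that $(H,L_H)$ presents $Y$.

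\emph{Synchronizing implies magic.} Let $w$ be synchronizing and let $\gamma_1,\gamma_2$ be paths in $H$ labeled $w$, terminating at $v_1$ and $v_2$. By follower-separation it suffices to show $f_H(v_1)=f_H(v_2)$, and by symmetry it suffices to show $f_H(v_1)\subseteq f_H(v_2)$.

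\begin{itemize}
\item[1)] By regularity, choose $z\in X_H$ with $t_H(z_{(-\infty,-1]})=s_H(\gamma_2)$ and $f_H(s_H(\gamma_2))=F(L_H(z))$. Since $H$ is right-resolving, $f_H(v_2)$ is the set of $x\in A^{\mathbb N}$ with $wx\in f_H(s_H(\gamma_2))$. Hence
$$
f_H(v_2)=\left\{x: \ L_H(z)_{(-\infty,-1]}\,w\,x\in Y\right\}.
$$
\item[2)] Take $x\in f_H(v_1)$. Then $wx_{[1,n]}\in\mathbb W(Y)$ for all $n$. For every $m$, the word $L_H(z)_{[-m,-1]}w$ is the label of a path, so it lies in $\mathbb W(Y)$.
\item[3)] The synchronizing property gives $L_H(z)_{[-m,-1]}\,w\,x_{[1,n]}\in\mathbb W(Y)$ for all $m,n$.
\item[4)] Since $Y$ is closed, $L_H(z)_{(-\infty,-1]}\,w\,x\in Y$, so $x\in f_H(v_2)$ by the description in 1).
\end{itemize}

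\emph{Main obstacle.} The delicate step is 1)--2). The follower set of a vertex must be identified with a genuine future set of a left-infinite sequence, not merely with the set of $x$ such that $wx$ is a word. The naive follower set, built from words alone, could be strictly larger than $f_H(v_2)$ and would not give the needed inclusion; regularity is exactly what closes this gap. Two details also need care. First, $f_H(v_1)$ consists of labels of right-infinite paths, so each $x$ in it automatically satisfies $wx_{[1,n]}\in\mathbb W(Y)$ for all $n$. Second, the compactness passage in 4) from finite words to an element of $Y$.
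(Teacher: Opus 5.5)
Your proof is correct and follows essentially the same route as the paper. In both, regularity supplies a ray $z$ realizing $f_H(s_H(\gamma_2))$ as a genuine future set, the synchronizing property glues $L_H(z)_{[-m,-1]}w$ to $wx_{[1,n]}$, right-resolving transfers the conclusion to $f_H(v_2)$, and follower-separation gives $v_1=v_2$. The only differences are that the paper argues the inclusion $f_H(v_1)\subseteq f_H(v_2)$ by contradiction, and it calls the converse (magic implies synchronizing) obvious, whereas you spell it out.
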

\begin{proof} Assume $w$ is synchronizing for $Y$, and consider two paths $p^1$ and $p^2$ in $H$ such that $L_H(p^i) = w, \ i =1,2$. Assume for a contradiction that $f_H(t_H(p^1))$ contains an element $u \in Y[0,\infty)$
 which is not in $f_H(t_H(p^2))$. Since $(H,L_H)$ is regular there is a path $z \in X_H$ such that $t_H(z_{(-\infty,-1]}) = s_H(p^2)$ and $F(z)= f_H(s_H(p^2))$. Then $L_H(z_{(-\infty,-1]})w =L_H(z_{[-\infty,-1]}p^2) \in Y(-\infty, |w|-1]$, but $L_H(z_{(-\infty,-1]})wu\notin Y$ since otherwise $wu \in F(z) = f_H(s_H(p^2))$ and hence $u \in f_H(t_H(p^2))$ which is not the case. Hence for some large $N$, $L_H(z_{(-N,-1]})wu_{[0,N]} \notin \mathbb W(Y)$ while $L_H(z_{(-N,-1]})w \in \mathbb W(Y)$ and $wu_{[0,N]} \in \mathbb W(Y)$. This contradicts that $w$ is synchronizing and we conclude therefore that $f_H(t_H(p^1)) \subseteq f_H(t_H(p^2))$. By exchanging the roles of $p^1$ and $p^2$ in this argument we reach the conclusion that $f_H(t_H(p^1)) =f_H(t_H(p^2))$, and hence also that $t_H(p^1) = t_H(p^2)$ since $(H,L_H)$ is follower-separated. This shows that $w$ is magic for $(H,L_H)$. The converse, that magic implies synchronization, is obvious. 
 \end{proof}

Since any sofic subshift has a presentation which is right-resolving, regular and follower-separated, for example its future cover, it follows from Lemma \ref{20-12-25x} and Lemma 2.21 in \cite{Th1} that any sofic subshift has a synchronizing word:

\begin{lemma}\label{12-06-26e} Let $Y$ be a sofic subshift. Then $\sync(Y) \neq \emptyset \ \text{and} \ \eth Y \neq Y$.
\end{lemma}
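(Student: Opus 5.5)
The plan is to apply Lemma \ref{20-12-25x} to the future cover $(\mathbb K(Y),L_{\mathbb K(Y)})$. It is right-resolving and follower-separated by Corollary \ref{04-09-25d} and Lemma \ref{10-09-25}. It is regular by Lemma \ref{07-02-26} together with the last assertion of Lemma \ref{10-09-25}. Hence a word is synchronizing for $Y$ exactly when it is magic for $(\mathbb K(Y),L_{\mathbb K(Y)})$. The problem is therefore reduced to exhibiting a single magic word for the future cover, and this is where Lemma 2.21 of \cite{Th1} comes in.

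To produce a magic word I will use the standard minimisation argument. For a word $w \in \mathbb W(Y)$, let $T(w)$ be the set of terminal vertices of paths in $\mathbb K(Y)$ labelled $w$. Since the graph is right-resolving, $\#T(wu) \le \#T(w)$ for every extension $wu \in \mathbb W(Y)$. Choose $w$ with $\#T(w)$ minimal and $\geq 1$.

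If $\#T(w)=1$, then $w$ is magic and we are done. Otherwise there are two distinct vertices $F_1 \ne F_2$ in $T(w)$. These vertices are follower sets, so follower-separation gives a right-infinite sequence lying in one but not the other. We then want to extend $w$ by a word $u$ with $wu \in \mathbb W(Y)$ so that the number of terminal vertices drops, which contradicts minimality.

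Extending is only useful if $u$ is allowed from some vertex of $T(w)$ but not from all of them. By follower-separation such a $u$ exists, but we must make sure that $wu \in \mathbb W(Y)$ and that the resulting set of terminal vertices is nonempty and strictly smaller. This requires the paths to be extendable on the left, which we can arrange by passing to the presentation without sources and sinks. This step is exactly the content of Lemma 2.21 in \cite{Th1}, and I would invoke it rather than reprove it, since it states that a right-resolving, follower-separated presentation has a magic word.

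Once a synchronizing word $w$ is obtained, take any $y \in Y$ containing $w$. Such a $y$ exists because $w \in \mathbb W(Y)$. Then $y \in \sync(Y)$, so $\sync(Y) \neq \emptyset$, and since $\eth Y = Y \backslash \sync(Y)$ we get $\eth Y \neq Y$.

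I expect the main obstacle to be verifying that the hypotheses of the cited Lemma 2.21 match those available here (finiteness, right-resolving, follower-separated, no sinks). If they do not match, the fallback is to carry out the minimisation argument above directly on $\mathbb K(Y)$.
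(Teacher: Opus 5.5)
Your proposal is correct and follows the paper's own proof. The paper likewise applies Lemma \ref{20-12-25x} to the future cover, which is right-resolving, regular and follower-separated, and takes a magic word from Lemma 2.21 of \cite{Th1}; it then concludes $\sync(Y)\neq\emptyset$ and $\eth Y\neq Y$ immediately.

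Your fallback minimisation argument is also sound, because $\mathbb K(Y)$ has no sources or sinks, so every path label is a word of $Y$. Note too that you only need the easy direction (magic implies synchronizing), which holds for any such presentation.
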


\begin{lemma}\label{02-06-26}  Let $(H,L_H)$ be a right-resolving, regular and follower-separated presentation of the sofic subshift $Y$. Let $y \in Y$. The following are equivalent:
\begin{itemize}
\item[(1)] $y$ contains a synchronizing word for $Y$.
\item[(2)] $\lim_{j \to \infty} \# s_{\overline{H}}(\beta_H(y)_j) = 1$.
\item[(3)] $\lim_{j \to \infty} \# s_{\overline{H}}(\alpha_H(y)_j) = 1$.
\end{itemize}
\end{lemma}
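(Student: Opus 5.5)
The plan is to prove (1)$\Rightarrow$(2), (2)$\Rightarrow$(3) and (3)$\Rightarrow$(1), using Lemma \ref{20-12-25x} to replace ``synchronizing'' by ``magic for $(H,L_H)$''.

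\emph{Step (1)$\Rightarrow$(2).} Suppose $y_{[i,i+k-1]}=w$ is synchronizing. By Lemma \ref{20-12-25x}, $w$ is magic for $(H,L_H)$, so every path in $H$ labeled $w$ ends at one vertex $v$. Each $z\in L_H^{-1}(y)$ gives such a path $z_{[i,i+k-1]}$, so $s_{\overline{H}}(\beta_H(y)_{i+k})=\{t_H(z_{i+k-1}): z\in L_H^{-1}(y)\}=\{v\}$. Since $(H,L_H)$ is right-resolving, the size of $s_{\overline{H}}(\beta_H(y)_j)$ is non-increasing in $j$ along $\beta_H(y)$. It is also at least $1$, because $L_H^{-1}(y)\neq\emptyset$. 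Hence it equals $1$ for all $j\ge i+k$, which gives (2).

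\emph{Step (2)$\Rightarrow$(3).} This follows at once from Lemma \ref{08-10-25bxz}: $\beta_H(y)$ and $\alpha_H(y)$ agree for all large $j$, so the two limits coincide.

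\emph{Step (3)$\Rightarrow$(1).} This is the main step. Choose $N$ with $s_{\overline{H}}(\alpha_H(y)_N)=D^{\sigma^N(y)}=\{v\}$. By definition of $D^{\sigma^N(y)}$, every path in $H$ labeled by $y_{(-\infty,N-1]}$ ends at $v$. We want a finite word $y_{[N-M,N-1]}$ with the same property.

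Let $S_M$ be the set of terminal vertices of paths in $H$ labeled $y_{[N-M,N-1]}$. The sets $S_M$ are non-empty and decreasing in $M$ (any path labeled $y_{[N-M-1,N-1]}$ restricts to one labeled $y_{[N-M,N-1]}$). Since $V_H$ is finite, they stabilize at some set $S$. Suppose $u\in S$. Then for every $M$ there is a path labeled $y_{[N-M,N-1]}$ ending at $u$. A König/compactness argument, which needs $H$ to be finite and without sinks or sources, produces a left-infinite path labeled $y_{(-\infty,N-1]}$ ending at $u$. This left-infinite path must be extendable to a ray in $X_H$; that holds because $H$ has no sinks. Therefore $u\in D^{\sigma^N(y)}=\{v\}$, so $S=\{v\}$.

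Consequently $w:=y_{[N-M,N-1]}$ is magic for $(H,L_H)$ for large $M$. By Lemma \ref{20-12-25x}, $w$ is synchronizing, which gives (1).

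The main obstacle is this compactness step in (3)$\Rightarrow$(1). Finiteness of $H$ handles it, since paths of length $M$ ending at $u$ with the prescribed labels form a nonempty inverse system of finite sets.
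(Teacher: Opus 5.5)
Your proposal is correct and follows essentially the same route as the paper. Both use Lemma \ref{20-12-25x} (synchronizing $\Leftrightarrow$ magic) for the implications into and out of (1), and Lemma \ref{08-10-25bxz} to pass between $\beta_H(y)$ and $\alpha_H(y)$; the only difference is that you close the cycle with (3)$\Rightarrow$(1) and spell out the one-sided compactness argument that produces the finite magic word $y_{[N-M,N-1]}$, whereas the paper argues (2)$\Rightarrow$(1) in one line and leaves that compactness step implicit.
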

\begin{proof} The equivalence of (2) and (3) follows because $\alpha_H(y)$ and $\beta_H(y)$ are forward asymptotic by Lemma \ref{08-10-25bxz}. (1) $\Rightarrow$ (2): If $y_{[i,j]}$ is synchronizing for $Y$ it follows from Lemma \ref{20-12-25x} that $\#s_{\overline{H}}(\beta_H(y)_k) = 1$ for $k \geq j$.

  (2) $\Rightarrow$ (1): It follows from (2) that for some $N \in \mathbb N$ there is only one vertex of $V_H$ where a path in $H$ labeled $y_{[-N,N]}$ can terminate, and then $y_{[-N,N]}$ is synchronizing by Lemma \ref{20-12-25x}.
\end{proof}

 Let $(H,L_H)$ be a right-resolving, regular and follower-separated presentation of $Y$.
Let $\eth \underline{H}$ be the subgraph of $\underline{H}$ with vertex set
$$
V_{\eth \underline{H}} := \left\{ F \in V_{\underline{H}}: \ \# F \geq 2\right\}
$$
and arrow set
$$
E_{\eth \underline{H}} := \left\{e \in E_{\underline{H}}: \ s_{\underline{H}}(e), \ t_{\underline{H}}(e) \in V_{\eth \underline{H}} \right\} .
$$
Then $\eth \underline{H}$ has no sources, but it may have sinks and we let $\eth \underline{H} = (E_{\eth \underline{H}}, V_{\eth \underline{H}})$ be the graph obtained by successively removing sinks. In other words, without changing the notation we arrange that $V_{\eth \underline{H}}$ does not contain sinks or sources.

\begin{thm}\label{11-03-26b} $\eth Y = L_{\underline{H}}(X_{\eth \underline{H}})$.
\end{thm}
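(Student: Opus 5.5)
We prove the two inclusions separately. Throughout, $(H,L_H)$ is right-resolving, regular and follower-separated, so Lemma \ref{20-12-25x} and Lemma \ref{02-06-26} are available.

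For $\eth Y \subseteq L_{\underline{H}}(X_{\eth \underline{H}})$, fix $y \in \eth Y$ and look at $x := \alpha_H(y) \in X_{\underline{H}}$. We claim that $\# s_{\underline{H}}(x_j) \geq 2$ for every $j \in \mathbb Z$. Suppose instead that $s_{\underline{H}}(x_j) = D^{\sigma^j(y)}$ is a singleton for some $j$. The sets $D^{\sigma^j(y)}$ are given by the left-infinite past $y_{(-\infty,j-1]}$. By compactness (the argument of Lemma \ref{08-10-25bxz}), some finite word $y_{[j-N,j-1]}$ already forces every path in $H$ with that label to terminate at the single vertex of $D^{\sigma^j(y)}$. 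This is exactly where right-resolving is used: the terminal sets for longer and longer finite pasts decrease to $D^{\sigma^j(y)}$. That finite word is then magic, hence synchronizing by Lemma \ref{20-12-25x}, which contradicts $y \in \eth Y$. So every edge $x_j$ starts and ends in $V_{\eth\underline{H}}$. Since $x$ is bi-infinite, none of these vertices is removed when sinks are pruned. Hence $x \in X_{\eth \underline{H}}$ and $y = L_{\underline{H}}(x)$.

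For the reverse inclusion, let $x \in X_{\eth\underline{H}}$ and $y := L_{\underline{H}}(x)$. We must show that $y$ contains no synchronizing word. Suppose it does. By Lemma \ref{02-06-26}, $\# s_{\overline{H}}(\alpha_H(y)_j) = 1$ for all large $j$. On the other hand $x \in X_{\underline{H}}$ has label $y$, so we want the inclusion $s_{\underline{H}}(x_j) \subseteq s_{\underline{H}}(\alpha_H(y)_j)$ for all $j$. We prove it by induction from the left. Pick $j_0$; the vertex $s(x_{j_0})$ is a set $D^{z}$ for some $z$, and following the labels $y_{[j_0,\dots]}$ from it yields exactly the sets $s(x_j)$, by right-resolvingness of $\overline{H}$. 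What remains is to compare $D^z$ with $D^{\sigma^{j_0}(y)}$.

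This comparison is the main obstacle, because $D^{\sigma^{j_0}(y)}$ depends on the whole past of $y$ and not on $z$. I would argue as follows. Every vertex $v \in s(x_j)$ is the terminus of some path $\mu$ in $H$ with $s(\mu) \in s(x_{i})$ and label $y_{[i,j-1]}$, for every $i<j$. Compactness, together with the finiteness of $V_H$, then produces a left-infinite path in $H$ labelled $y_{(-\infty,j-1]}$ and ending at $v$. Hence $v \in D^{\sigma^j(y)}$.

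With the inclusion in hand, $\# s_{\underline{H}}(x_j) \leq 1$ for large $j$, which contradicts $\# s_{\underline{H}}(x_j) \geq 2$ for $x \in X_{\eth\underline{H}}$. So $y \in \eth Y$, and the two inclusions together give $\eth Y = L_{\underline{H}}(X_{\eth \underline{H}})$.
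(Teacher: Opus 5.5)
Your proof is correct and follows the paper's route. For $L_{\underline{H}}(X_{\eth\underline{H}}) \subseteq \eth Y$ you use the inclusion $s_{\underline{H}}(x_j) \subseteq s_{\underline{H}}(\alpha_H(y)_j)$ together with Lemma \ref{02-06-26}, and for the converse you show that $\alpha_H(y) \in X_{\eth\underline{H}}$ when $y \in \eth Y$.

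There are only two small differences from the paper, plus one misattribution:
\begin{itemize}
\item You actually prove the inclusion by a compactness argument, which the paper uses without proof.
\item In place of the paper's monotonicity of $\# s_{\underline{H}}(\alpha_H(y)_j)$, you extract a magic word directly from a singleton $D^{\sigma^j(y)}$.
\item In that extraction, the shrinking and stabilization of the finite-past terminal sets comes from compactness and the finiteness of $V_H$, not from right-resolvingness as you claim; this does not affect the proof.
\end{itemize}
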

\begin{proof} Let $y \in L_{\underline{H}}(X_{\eth \underline{H}})$. Then $y = L_{\underline{H}}(x)$ for some $x \in X_{\underline{H}}$ with $\# s_{\underline{H}}(x_j) \geq 2$ for all $j \in \mathbb Z$. Since $s_{\underline{H}}(x_j) \subseteq   s_{\underline{H}}(\alpha_{{H}}(y)_j)$ for all $j$, it follows from Lemma \ref{02-06-26} that $y \in \eth Y$. For the converse note that for each $y \in Y$ the number $\# s_{\underline{H}}(\alpha_{{H}}(y)_j)$ is non-increasing in $j$. Therefore, when $y \in \eth Y$ it follows from Lemma \ref{02-06-26} that $\alpha_{{H}}(y) \in \eth \underline{H}$. Hence $y = L_{\underline{H}}(\alpha_{{H}}(y)) \in  L_{\underline{H}}(X_{\eth \underline{H}})$.
\end{proof}

\begin{cor}\label{15-03-26} $\eth Y$ is a sofic subshift.
\end{cor}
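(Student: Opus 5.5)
The corollary follows from Theorem \ref{11-03-26b} once we choose a suitable presentation $(H,L_H)$ and check that $(\eth \underline{H}, L_{\underline{H}}|_{E_{\eth\underline{H}}})$ is a finite labeled graph. For $(H,L_H)$ I will take the future cover $(\mathbb K(Y),L_{\mathbb K(Y)})$. It is right-resolving and follower-separated by construction. It is also regular: each vertex $F(y)$ is the terminal vertex of $\alpha_Y(y)_{(-\infty,-1]}$, and its follower set is $F(y)$ itself, using the right-resolving property and the definition of the arrows. So the standing hypotheses before Theorem \ref{11-03-26b} are satisfied.

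Next, $\underline{H}$ is a subgraph of the subset construction $\overline{H}$. Its vertex set is contained in the finite set of non-empty subsets of $V_H$, and for each vertex and label there is at most one edge. Hence $\underline{H}$, and therefore its subgraph $\eth\underline{H}$, is a finite directed graph. After successively removing sinks, $\eth\underline{H}$ has neither sinks nor sources. The restriction of $L_{\underline{H}}$ to $E_{\eth\underline{H}}$ is a labeling.

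Consequently $L_{\underline{H}}(X_{\eth\underline{H}})$ is the image of an edge shift under a $1$-block code, i.e. a sofic subshift. By Theorem \ref{11-03-26b} this image equals $\eth Y$. One point needs care: if $X_{\eth\underline{H}}$ is empty, for example when $Y$ is of finite type, then $\eth Y=\emptyset$. This case is covered either by the convention that the empty set is a (trivially sofic) subshift, or by stating it separately.

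There is no real obstacle here. The only point to verify is that the sink-removal performed after the definition of $\eth\underline{H}$ does not change the set of bi-infinite paths. This holds because an edge leading into a sink, or leading successively into sinks, cannot appear in any bi-infinite path. So the removal does not alter $X_{\eth\underline{H}}$, and Theorem \ref{11-03-26b} applies to the pruned graph as stated.
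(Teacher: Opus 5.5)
Your proposal is correct and follows the paper, which gives no separate proof and treats the corollary as an immediate consequence of Theorem~\ref{11-03-26b}: taking $(H,L_H)$ to be the future cover (right-resolving, regular and follower-separated, as the paper notes just before Lemma~\ref{12-06-26e}), $\eth Y = L_{\underline{H}}(X_{\eth\underline{H}})$ is the label image of the edge shift of a finite graph. Your additional checks are all correct, just more detailed than the paper: finiteness of $\underline{H}$ (which uses that $\mathbb K(Y)$ is finite, e.g.\ via Corollary~\ref{04-09-25d}), invariance of $X_{\eth\underline{H}}$ under sink removal, and the empty case.
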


We can now iterate the definition and define \emph{the $n$'th desynchronization} $\eth^n Y$ of $Y$ inductively:
$$
\eth^{n+1} Y := \eth \eth^n Y.
$$ 
This gives us a decreasing sequence 
\begin{equation}\label{12-06-26g}
Y \supseteq \eth Y \supseteq \eth^2 Y \supseteq \eth^3Y \supseteq \cdots 
\end{equation}
of sofic subshifts of $Y$. Note that $\eth^{n} Y \neq \eth^{n+1} Y$ unless $ \eth^{n} Y = \emptyset$.

\begin{rem}\label{12-06-26f} As alluded to in the introduction the definition of $\eth Y$ is similar in spirit to the definition of the derived shift $\partial Y$ in \cite{Th0}. The precise relation is simple:
$$
\partial Y = \eth R(Y),
$$
where $R(Y)$ is the non-wandering part of $Y$, i.e. the closure of the periodic orbits in $Y$. In particular, when $Y$ is irreducible the desynchronization of $Y$ is the same as the derived shift of $Y$, cf. \cite{Th1}. However, beyond the non-wandering case the two notions disagree. As a simple illustration of this consider the following labeled graph:

\begin{equation*}
 \begin{tikzpicture}[node distance={35mm}, thick, main/.style = {draw, circle}] 
\node[main] (1) {}; 
\node[main] (2) [right of=1] {} ; 
\draw[->] (1) to [out=50,in=100,looseness=10] node[above,pos=0.5] {$0$} (1);
\draw[->] (1) to [out=160,in=210,looseness=10] node[left ,pos=0.5] {$1$} (1); 
\draw[->] (1) to [out=30,in=150,looseness=1] node[above,pos=0.5] {$2$} (2);
\draw[->] (2) to [out=50,in=100,looseness=10] node[above,pos=0.5] {$1$} (2);
\draw[->] (2) to [out=30,in=330,looseness=10] node[below,pos=0.5] {$0$} (2);
\end{tikzpicture} 
\end{equation*}
If $Y$ is the sofic subshift presented by this graph we have that $\partial Y = \emptyset$ while $\eth Y$ is the full 2-shift. This difference between $\partial$ and $\eth$ has big consequences when the constructions are iterated, also for irreducible or mixing sofic subshifts. For example, for any sofic subshift $Z$ it is possible to construct a mixing sofic subshift $Y$ such that the second desynchronization $\eth^2 Y$ of $Y$ is conjugate to $Z$ while the second derived shift space $\partial^2 Y$ of $Y$ is empty. As we shall point out in Example \ref{17-07-26} below, the sofic shift presented by the labeled graph \eqref{17-07-26b} has by chance this property when $Z$ is the full two-shift.\footnote{The work on the paper \cite{Th0} was done almost 25 years ago, but I remember considering the subshift $\eth Y$ already then. It seemed to be useless, however, because I could not prove that it is always sofic.}
\end{rem}

\begin{lemma}\label{26-07-26} Let $y \in \eth Y$ and set $v = s_{\underline{H}}(\alpha_H(y)_0)$. Then $v \in V_{\eth \underline{H}}$ and \begin{equation*}
 f_{\eth \underline{H}}(v) = F_{\eth Y}(y),
 \end{equation*}
where we have used the symbol $ F_{\eth Y}(y)$ for the follower set in $\eth Y$ of $y \in \eth Y$ in order to distinguish it from $F(y)$; its follower set in $Y$.
\end{lemma}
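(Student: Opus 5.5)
Set $x := \alpha_H(y) \in X_{\underline{H}}$. By the proof of Theorem \ref{11-03-26b}, $y \in \eth Y$ forces $\# s_{\underline{H}}(x_j) \geq 2$ for all $j$: the sequence $\# s_{\underline{H}}(x_j)$ is non-increasing in $j$ and does not tend to $1$ by Lemma \ref{02-06-26}. Hence $x$ is a bi-infinite path all of whose vertices lie in $V_{\eth\underline{H}}$. Such a path survives the successive removal of sinks, so $x \in X_{\eth\underline{H}}$. In particular $v = s_{\underline{H}}(x_0) \in V_{\eth \underline{H}}$, and $t_{\eth\underline{H}}(x_{(-\infty,-1]}) = v$.

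I will prove the equality of the two sets by the two inclusions.

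For $f_{\eth\underline{H}}(v) \subseteq F_{\eth Y}(y)$, take a right-infinite path $u$ in $\eth\underline{H}$ starting at $v$ and concatenate: $x_{(-\infty,-1]}u \in X_{\eth\underline{H}}$. By Theorem \ref{11-03-26b} its label $y_{(-\infty,-1]}L_{\underline{H}}(u)$ lies in $\eth Y$. Hence $L_{\underline{H}}(u) \in F_{\eth Y}(y)$.

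For $F_{\eth Y}(y) \subseteq f_{\eth\underline{H}}(v)$, let $w \in F_{\eth Y}(y)$ and set $y' := y_{(-\infty,-1]}w \in \eth Y$. Then $\alpha_H(y')$ and $\alpha_H(y)$ agree on $(-\infty,-1]$, because $s_{\underline{H}}(\alpha_H(\cdot)_k) = D^{\sigma^k(\cdot)}$ depends only on the past. So $s_{\underline{H}}(\alpha_H(y')_0) = t_{\underline{H}}(\alpha_H(y)_{-1}) = v$. By the first paragraph applied to $y'$, we get $\alpha_H(y') \in X_{\eth\underline{H}}$. Thus $\alpha_H(y')_{[0,\infty)}$ is a path in $\eth\underline{H}$ from $v$ with label $w$, and $w \in f_{\eth\underline{H}}(v)$.

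The only delicate point is the claim that a bi-infinite path whose vertices all have cardinality $\ge 2$ lies in the sink-pruned graph $\eth \underline{H}$. This holds because every vertex on a bi-infinite path has an outgoing edge inside the path, and so is never removed as a sink. The rest is bookkeeping with $\alpha_H$ and Theorem \ref{11-03-26b}.
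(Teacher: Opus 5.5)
Your proof is correct and follows the paper's argument essentially step for step. You get $\alpha_H(y)\in X_{\eth\underline{H}}$ from Lemma \ref{02-06-26}, deduce $f_{\eth\underline{H}}(v)\subseteq F_{\eth Y}(y)$ from Theorem \ref{11-03-26b}, and obtain the reverse inclusion by applying the same membership to $\alpha_H(y_{(-\infty,-1]}w)$, whose start vertex at $0$ is still $v$; you also spell out the monotonicity and sink-pruning details that the paper leaves implicit.
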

\begin{proof} It follows from Lemma \ref{02-06-26} that $\alpha_H(y)_j \in E_{\eth \underline{H}}$ for all $j \in \mathbb Z$; in particular also that $v \in V_{\eth \underline{H}}$. Then Theorem \ref{11-03-26b} implies that $f_{\eth \underline{H}}(v) \subseteq F_{\eth Y}(y)$. On the other hand, if $z \in \eth Y[0,\infty)$ and $y_{(-\infty,-1]}z \in \eth Y$, we have that $s_{\underline{H}}(\alpha_H(y_{(-\infty,-1]}z )_0) = v$ while $\alpha_H(y_{(-\infty,-1]}z ) \in X_{\eth \underline{H}}$ by Lemma \ref{02-06-26}. Since $L_{\eth \underline{H}}(\alpha_H(y_{(-\infty,-1]}z )_{[0,\infty)}) =z$, it follows that $z \in f_{\eth \underline{H}}(v)$.
\end{proof}

\begin{lemma}\label{26-07-26a} 
\begin{itemize}
\item[(a)] $(\eth \underline{H}, L_{\eth \underline{H}})$ is right-resolving and regular, and
\item[(b)] the merged graph $([\eth \underline{H}], L_{[\eth \underline{H}]})$ of  $(\eth \underline{H}, L_{\eth \underline{H}})$ is isomorphic, as a labeled graph, to the future cover $(\mathbb K(\eth Y),L_{\mathbb K(\eth Y)})$ of $\eth Y$.
\end{itemize}
\end{lemma}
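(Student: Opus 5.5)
Part (a) splits into right-resolving and regular. Right-resolving is inherited: $\eth \underline{H}$ is a subgraph of $\underline{H}$, and $(\underline{H},L_{\underline{H}})$ is right-resolving as a subgraph of the subset construction $\overline{H}$. Removing sinks does not create two equally labeled arrows out of a vertex.

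For regularity, fix a vertex $v \in V_{\eth \underline{H}}$. The natural witness would be an element $y \in \eth Y$ with $s_{\underline{H}}(\alpha_H(y)_0) = v$. If we have one, Lemma \ref{26-07-26} gives $f_{\eth \underline{H}}(v) = F_{\eth Y}(y)$, and $z := \alpha_H(y)$ lies in $X_{\eth\underline{H}}$ with $t(z_{(-\infty,-1]}) = v$ and $L(z)=y$. That is exactly the regularity condition, read relative to $\eth Y$, which is the subshift $(\eth\underline H, L_{\eth\underline H})$ presents by Theorem \ref{11-03-26b}.

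So the key step is to show that every vertex of $\eth\underline H$ has the form $s_{\underline H}(\alpha_H(y)_0)$ with $y\in\eth Y$. I would argue as follows.
\begin{itemize}
\item Since $v$ survives the sink removal, there is a right-infinite path in $\eth\underline H$ from $v$.
\item I would adapt Lemma \ref{10-02-25dxxz} to get a left-infinite path into $v$. A vertex $v=D^{y'}$ of $\underline H$ equals $t_{\underline H}(\alpha_H(y')_{(-\infty,-1]})$. Every vertex along $\alpha_H(y')_{(-\infty,-1]}$ has cardinality at least $\#v\ge 2$, because cardinalities are non-increasing along $\alpha_H$ (as used in the proof of Theorem \ref{11-03-26b}). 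So the whole left half stays in $V_{\eth\underline H}$.
\item Concatenate it with a right-infinite path in $\eth\underline H$ starting at $v$, and call the result $x$.
\item Since $\alpha_H(y')$ satisfies $x'=\alpha_H(L(x'))$ and $x$ agrees with it on the left, Lemma \ref{11-11-24axz} gives $x=\alpha_H(L(x))$.
\item Then $y:=L(x)$ has $\#s(\alpha_H(y)_j)\ge 2$ for all $j$. By Lemma \ref{02-06-26}, $y\in\eth Y$.
\end{itemize}

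For (b), I would mimic Proposition \ref{07-02-26a}. By Lemma \ref{10-09-25} and (a), the merged graph $([\eth\underline H],L_{[\eth\underline H]})$ is right-resolving, follower-separated and regular, and it presents $\eth Y$ because $f_{\eth\underline H}$ is a factor map. Proposition 3.1 of \cite{Th1} then supplies an injective labeled-graph homomorphism $\theta$ into $\mathbb K(\eth Y)$, with $\theta(f_{\eth\underline H}(v)) = F_{\eth Y}(L(z))$ and hereditary range.

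Surjectivity on vertices should follow from Lemma \ref{26-07-26}. Every $F_{\eth Y}(y)$ with $y\in\eth Y$ equals $f_{\eth\underline H}(s(\alpha_H(y)_0))$, and $\theta$ sends this to $F_{\eth Y}(y)$. Heredity then gives surjectivity on edges.

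The main obstacle is the vertex-realization step in (a). One must check that removing sinks does not remove $v$'s incoming left-infinite path, and that the left half of $\alpha_H(y')$ really stays in vertices of size at least $2$. Monotonicity of $\#$ along $\alpha_H$ handles the latter. For the former, every vertex on a bi-infinite path of size-$\ge2$ vertices survives the sink removal.
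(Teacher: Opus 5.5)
Your proposal is correct and follows the paper's proof essentially step for step. Like the paper, you realize each $v \in V_{\eth \underline{H}}$ as $s_{\underline{H}}(\alpha_H(y)_0)$ with $y \in \eth Y$ by concatenating $\alpha_H(y')_{(-\infty,-1]}$ with a right-infinite path in $\eth \underline{H}$ from $v$, and then use Lemma \ref{26-07-26} both for regularity and for matching the follower sets in (b). The differences are cosmetic: the paper certifies $y \in \eth Y$ via Theorem \ref{11-03-26b} and $D^y = D^{y'}$ rather than Lemmas \ref{11-11-24axz} and \ref{02-06-26}, and closes (b) with the vertex-counting Corollary 3.3 of \cite{Th1} instead of rerunning Proposition \ref{07-02-26a}. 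Your check that the left half stays in vertices of size at least $2$ and survives sink removal spells out a step the paper leaves implicit.
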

\begin{proof} (a)  It is obvious that $\eth \underline{H}$, as a subgraph of $\underline{H}$, is right-resolving. To show that $\eth \underline{H}$ is regular, let $v \in V_{\eth \underline{H}}$. Then $v = s_{\underline{H}}(\alpha_H(y')_0)$ for some $y' \in Y$. Since $\eth \underline{H}$ has no sinks there is a $z' \in X_{\eth \underline{H}}[0,\infty)$ such that $s_{\eth \underline{H}}(z') = v$. Then $z := \alpha_H(y')_{(-\infty,-1]}z' \in X_{\eth \underline{H}}$ and $y:= L_{\eth \underline{H}}(z) \in \eth Y$ by Theorem \ref{11-03-26b}. Since $v = s_{\underline{H}}(\alpha_H(y')_0) = s_{\underline{H}}(\alpha_H(y)_0)$, it follows from Lemma \ref{26-07-26} that $v$ is regular in  $(\eth \underline{H}, L_{\eth \underline{H}})$.

(b)  It follows from (a), Lemma \ref{10-09-25} and Theorem \ref{11-03-26b}, that $([\eth \underline{H}], L_{[\eth \underline{H}]})$ is a right-resolving, regular and follower-separated presentation of $\eth Y$. Hence, by Corollary 3.3 of \cite{Th1} all we need to do is to show that the number $\# V_{[\eth \underline{H}]}$ of vertices in $[\eth \underline{H}]$ is the same as the number of follower sets in $\eth Y$. In symbols we need to show that
\begin{equation}\label{03-06-26b}
\left\{f_{\eth \underline{H}}(v) : \ v \in V_{\eth \underline{H}} \right\} = \left\{ F_{\eth Y}(y) : \ y \in \eth Y \right\},
\end{equation}
where we, as in Lemma \ref{26-07-26}, use the symbol $  F_{\eth Y}(y)$ for the follower set in $\eth Y$ of $y \in \eth Y$ in order to distinguish it from $F(y)$; its follower set in $Y$. Note that \eqref{03-06-26b} follows from Lemma \ref{26-07-26} if we can show, that when $v \in V_{\eth \underline{H}}$, there is a $y \in \eth Y$ such that $v= s_{\underline{H}}(\alpha_H(y)_0)$. This was done in the proof of (a).
\end{proof}

\begin{ex}\label{17-07-26} Consider again the mixing sofic subshift $Y$ presented by the labeled graph $(G,L_G)$ depicted in \eqref{17-07-26b}. The graph is the minimal right-resolving presentation of $Y$; in particular, it is right-resolving, regular and follower-separated. The extended future cover $(\K(Y),L_{\K(Y)})$ of $Y$ agrees with the future cover, i.e. $(\mathbb K(Y),L_{\mathbb K(Y)}) = (\K(Y),L_{\K(Y)})$, and it is presented by the labeled graph \eqref{17-07-26d}. The labeled graph $(\eth \underline{G}, L_{\underline{G}})$ is here presented by the following labeled graph. 
\begin{equation*}
 \begin{tikzpicture}[node distance={35mm}, thick, main/.style = {draw, circle}] 
\node[main] (1) {$\{a,c\}$};
\node[main] (2) [right of=1] {$\{b,c\}$};
\node[main] (3) [below of=2] {$\{a,b,c\}$};
\draw[->] (3) to [out=260,in=290,looseness=10] node[below,pos=0.5] {$0$} (3); 
\draw[->] (3) to [out=10,in=350,looseness=10] node[right,pos=0.5] {$1$} (3); 
\draw[->] (3) to [out=100,in=300,looseness=0.5] node[left,pos=0.5] {$2$} (1); 
\draw[->] (1) to [out=40,in=150,looseness=1.5] node[above,pos=0.5] {$0$} (2); 
\draw[->] (1) to [out=20,in=170,looseness=1] node[above,pos=0.5] {$1$} (2); 
\draw[->] (2) to [out=190,in=350,looseness=0.5] node[below,pos=0.5] {$0$} (1); 
\draw[->] (2) to [out=220,in=320,looseness=1] node[below,pos=0.3] {$1$} (1); 
 \end{tikzpicture} 
\end{equation*}

It follows from Lemma \ref{26-07-26a} that the Krieger cover $(\mathbb K(\eth Y), L_{\mathbb K(\eth Y)})$ of the desynchronization $\eth Y$ in this case is presented by the following labeled graph.  

\begin{equation*}
 \begin{tikzpicture}[node distance={35mm}, thick, main/.style = {draw, circle}] 
\node[main] (1) {};
\node[main] (2) [below of=1] {$\{a,b,c\}$};
\draw[->] (2) to [out=260,in=290,looseness=10] node[below,pos=0.5] {$0$} (2); 
\draw[->] (2) to [out=10,in=350,looseness=10] node[right,pos=0.5] {$1$} (2);
\draw[->] (1) to [out=220,in=140,looseness=10] node[left,pos=0.5] {$0$} (1); 
\draw[->] (1) to [out=300,in=45,looseness=10] node[right,pos=0.5] {$1$} (1); 
\draw[->] (2) to [out=90,in=270,looseness=0.5] node[right,pos=0.5] {$2$} (1); 
\end{tikzpicture} 
\end{equation*}
We note, en passant, that $\eth^2 Y$ is the full 2-shift while $\partial^2 Y = \emptyset$.

\end{ex}

 We end the paper by showing that the conjugacy $\psi_{\K} : X_{\K(Y)} \to X_{\K(Z)}$ from Theorem \ref{21-01-26} takes $X_{\eth \K(Y)}$ onto $X_{\eth \K(Z)}$, and that the resulting conjugacy $\psi_{\K} : X_{\eth \K(Y)} \to X_{\eth \K(Z)}$ is compatible with the conjugacy $(\psi|_{\eth Y})_\mathbb K : \ \mathbb K(\eth Y) \to \mathbb K(\eth Z)$ obtained by applying Krieger's theorem to the conjugacy $\psi|_{\eth Y} : \eth Y \to \eth Z$. For this we return to the setting of Theorem \ref{25-03-26}. Thus $(G,L_G)$ and $(H,L_H)$ are right-resolving presentations of the sofic subshifts $Y$ and $Z$, respectively, and there are conjugacies $\phi: X_G \to X_H$ and $\psi : Y \to Z$ such that \eqref{09-10-25axy} of Lemma \ref{12-02-26bx} commutes.

\begin{lemma}\label{03-05-26a} 
Given an element $y \in Y$ and $j \in \mathbb Z$, set
$$
L_G^{-1}(y)_{[j,\infty)} := \left\{ x_{[j,\infty)} : \ x \in L_G^{-1}(y)\right\} .
$$ 
Then
$$
\lim_{j \to \infty} \# L_G^{-1}(y)_{[j,\infty)} = \lim_{j \to \infty} \# L_H^{-1}(\psi(y))_{[j,\infty)} 
$$
for all $y  \in Y$.
\end{lemma}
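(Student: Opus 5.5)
The plan is to express both limits through the path $\beta_G(y)\in X_{G''}$ and then use the identity \eqref{12-02-26d}, $\phi''\circ\beta_G=\beta_H\circ\psi$.

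First, since $(G,L_G)$ is right-resolving, $L_G^{-1}(y)$ is finite. Two elements of $L_G^{-1}(y)$ that agree at one coordinate agree at all later coordinates. Hence $\# L_G^{-1}(y)_{[j,\infty)}$ is non-increasing in $j$, the limit exists, and it is attained for all large $j$. Moreover,
$$
\# L_G^{-1}(y)_{[j,\infty)} = \#\{x_j : x\in L_G^{-1}(y)\} = \# N(\beta_G(y)_j) = \# s_{G''}(\beta_G(y)_j).
$$
Here the last equality holds because right-resolving makes distinct edges with the same label have distinct start vertices. The same formulas hold for $H$ and $\psi(y)$.

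Next I compare through $\phi$. By \eqref{09-10-25axy}, $\phi$ maps $L_G^{-1}(y)$ bijectively onto $L_H^{-1}(\psi(y))$, since $\phi^{-1}$ also intertwines the labelings. Fix a window size $\kappa$ for $\phi$ and $\phi^{-1}$. If $x,x'\in L_G^{-1}(y)$ satisfy $x_{[j,\infty)}=x'_{[j,\infty)}$, then $\phi(x)_{[j+\kappa,\infty)}=\phi(x')_{[j+\kappa,\infty)}$. Thus the map $x_{[j,\infty)}\mapsto \phi(x)_{[j+\kappa,\infty)}$ is a well-defined surjection from $L_G^{-1}(y)_{[j,\infty)}$ onto $L_H^{-1}(\psi(y))_{[j+\kappa,\infty)}$. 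This gives
$$
\# L_H^{-1}(\psi(y))_{[j+\kappa,\infty)}\le \# L_G^{-1}(y)_{[j,\infty)}.
$$
Letting $j\to\infty$ yields $\lim_j \# L_H^{-1}(\psi(y))_{[j,\infty)}\le \lim_j \# L_G^{-1}(y)_{[j,\infty)}$. The symmetric argument with $\phi^{-1}$ and $\psi^{-1}$, applied to $\psi(y)$, gives the reverse inequality.

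There is no real obstacle. The only point needing care is that the right-tail equivalence is respected by a sliding block code with a shift in index, which is exactly the window-size argument above. Equivalently, one can read the statement off \eqref{12-02-26d}: $N(\phi''(\beta_G(y)))=\phi(N(\beta_G(y)))$, and the tail cardinalities of $N(\cdot)$ are preserved up to a window shift.
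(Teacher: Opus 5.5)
Your argument is correct and is essentially the paper's proof. In both, the window size $\kappa$ makes $\phi$ induce a surjection of $L_G^{-1}(y)_{[j,\infty)}$ onto $L_H^{-1}(\psi(y))_{[j+\kappa,\infty)}$, and the symmetric argument with $\phi^{-1}$ and $\psi^{-1}$ gives the reverse inequality. The reformulation through $\beta_G(y)$ and $s_{G''}$ announced in your first sentence is correct but plays no role in the argument.
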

\begin{proof} Since $\# L_G^{-1}(y)_{[j,\infty)} \geq \# L_G^{-1}(y)_{[j+1,\infty)}$, the limit $\lim_{j \to \infty} \# L_G^{-1}(y)_{[j,\infty)}$ exists, and for a similar reason so does $\lim_{j \to \infty} \# L_H^{-1}(\psi(y))_{[j,\infty)}$. Note that $L_H^{-1}(\psi(y)) = \phi(L_G^{-1}(y))$. Let $\kappa$ be a window size for $\phi$. Then
$$
  L_H^{-1}(\psi(y))_{[j+\kappa,\infty)}  = \phi(L_G^{-1}(y))_{[j+\kappa,\infty)}  = \phi(L_G^{-1}(y)_{[j,\infty)}),
  $$
  implying that
  $$
   \# L_H^{-1}(\psi(y))_{[j+\kappa,\infty)}  \leq \# L_G^{-1}(y)_{[j,\infty)},
   $$
and hence 
  $$
 \lim_{j \to \infty} \# L_H^{-1}(\psi(y))_{[j,\infty)} \le \lim_{j \to \infty} \# L_G^{-1}(y)_{[j,\infty)} .
 $$
 By symmetry we have the stated equality.
 \end{proof}

\begin{lemma}\label{23-04-26c} Let $C$ be a source component in $\underline{H}$. Then $x = \alpha_H(L_{\underline{H}}(x)) =\beta_H(L_{\underline{H}}(x))$ for all $x \in X_C$.
\end{lemma}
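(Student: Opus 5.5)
Let $C$ be a source component of $\underline{H}$ and $x \in X_C$, and set $y := L_{\underline{H}}(x)$. The identity $x = \beta_H(y)$ is item 1) of Corollary \ref{20-01-26h} applied to $(H,L_H)$ in place of $(G,L_G)$; that corollary was proved for an arbitrary right-resolving presentation, so it can be invoked without change. The plan is therefore to prove $x = \alpha_H(y)$ and then combine the two identities.

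Step 1 is the comparison of start vertices. By definition of $\alpha_H$, for every $k$ we have $s_{\underline{H}}(x_k) \subseteq s_{\underline{H}}(\alpha_H(y)_k)$. Indeed, every vertex of $s_{\underline{H}}(x_k)$ is the terminal vertex of an $H$-path whose label agrees with $y$ on $(-\infty,k-1]$. To see this, use Lemma \ref{10-02-25dxxz} or regularity to extend $x_{(-\infty,k-1]}$ backwards inside $C$, and use the structure of $\underline{H}$ as a subgraph of the subset construction; alternatively, the inclusion is already implicit in the proof of Lemma \ref{20-01-26dx}.

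Step 2 is a counting argument. Every element of $L_H^{-1}(y)$ yields a left-infinite path labelled $y_{(-\infty,k-1]}$. Conversely, by right-resolvingness and compactness, for $k$ far to the left the set $s_{\underline{H}}(\alpha_H(y)_k)$ consists exactly of the vertices $t_H(z_{k-1})$ with $z \in L_H^{-1}(y)$; this is the argument of Lemma \ref{08-10-25bxz}, run in the opposite direction. Lemma \ref{20-01-26dx} gives $\# L_H^{-1}(y) = M(C)$. Hence $\# s_{\underline{H}}(\alpha_H(y)_k) \le M(C) = \# s_{\underline{H}}(x_k)$ for all sufficiently negative $k$. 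Together with the inclusion from Step 1, this gives $s_{\underline{H}}(\alpha_H(y)_k) = s_{\underline{H}}(x_k)$ for such $k$.

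Step 3 propagates the equality forward. Once the start vertices agree at one coordinate $k$, the two rays carry the same label $y$ from $k$ onwards, so right-resolvingness of $\underline{H}$ gives $x_j = \alpha_H(y)_j$ for all $j \ge k$. Since $k$ can be taken arbitrarily negative, $x = \alpha_H(y)$.

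A cleaner route, which I would try first, avoids Step 2. From $x = \beta_H(y)$ we get that $s_{\underline{H}}(x_k)$ is the set of $t_H(z_{k-1})$ with $z \in L_H^{-1}(y)$. On the other hand, $s_{\underline{H}}(\alpha_H(y)_k)$ is the set of terminal vertices of left-infinite paths labelled $y_{(-\infty,k-1]}$. By compactness and the absence of sources, every such left-infinite path extends to the right along $y$ to an element of $L_H^{-1}(y)$ exactly when its terminal vertex lies in $s_{\underline{H}}(\alpha_H(y)_k)$ and $y_{[k,\infty)}$ is in that vertex's follower set. Since $C$ is a source component, $\alpha_H(y)_k$ lies in $C$ for very negative $k$, as in the last part of the proof of Lemma \ref{20-01-26dx}, and the cardinalities match.

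The main obstacle is the inclusion $s_{\underline{H}}(x_k) \subseteq s_{\underline{H}}(\alpha_H(y)_k)$ for an arbitrary ray $x$ in $C$ rather than one of the form $\alpha_H(\cdot)$. I would handle it by approximating $x$ by elements of the form $\alpha_H(p')$ with $p'$ backward asymptotic to a periodic point whose lift lies in $C$; these exist by Lemma \ref{11-11-24axz} and Lemma \ref{10-02-25dxxz}, since $C$ is a source component. This reproduces exactly the argument already used in Lemma \ref{20-01-26dx}.
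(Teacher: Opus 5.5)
\textbf{Gap in Step 2.} Your Step 2 rests on a claim that is false in general: that for $k$ far to the left, $s_{\underline{H}}(\alpha_H(y)_k)$ equals $\{t_H(z_{k-1}) : z \in L_H^{-1}(y)\}$. The compactness argument of Lemma \ref{08-10-25bxz} only works to the right. There $\# s_{\underline{H}}(\alpha_H(y)_k)$ is non-increasing in $k$, and once it has stabilized, every vertex of $s_{\underline{H}}(\alpha_H(y)_k)$ can follow $y_{[k,\infty)}$. Nothing of the sort happens as $k \to -\infty$.

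For a concrete failure, take the presentation $a \overset{1}{\to} a$, $a \overset{0}{\to} b$, $b \overset{0}{\to} a$ of the even shift and $y$ with $y_i = 0$ for $i<0$ and $y_i = 1$ for $i \geq 0$. Then $s_{\underline{H}}(\alpha_H(y)_k) = \{a,b\}$ for all $k \leq 0$, while $L_H^{-1}(y)$ is a single ray. So your bound $\# s_{\underline{H}}(\alpha_H(y)_k) \leq M(C)$ is unsupported. In the situation of the lemma the equality does hold, but establishing it is exactly what the lemma asserts.

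Your ``cleaner route'' names the right intermediate fact, namely that $\alpha_H(y)$ runs in $C$. The reason you give does not produce it, though. Being a source component tells you nothing about where $\alpha_H(y)$ lives in the far past unless you already know that it enters $C$ at some point.

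\textbf{The missing step.} Use Lemma \ref{08-10-25bxz} in its actual, forward direction. The ray $\alpha_H(y)$ is forward asymptotic to $\beta_H(y) = x \in X_C$, so $\alpha_H(y)_j \in E_C$ for all large $j$. Since $C$ is a source component, every earlier edge of $\alpha_H(y)$ lies in $C$ as well, i.e. $\alpha_H(y) \in X_C$. Hence $\# s_{\underline{H}}(\alpha_H(y)_k) = M(C) = \# s_{\underline{H}}(x_k)$ for every $k$. The inclusion then gives equality of start vertices, and right-resolvingness gives $x = \alpha_H(y)$. This is the paper's proof.

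\textbf{The inclusion is not the obstacle.} The inclusion you single out as the main obstacle is immediate from $x = \beta_H(y)$. Indeed, $s_{\underline{H}}(\beta_H(y)_k) = \{t_H(z_{k-1}) : z \in L_H^{-1}(y)\}$, and each $z_{(-\infty,k-1]}$ is a left-infinite path labelled $y_{(-\infty,k-1]}$. The approximation scheme via $\alpha_H(p')$ is therefore unnecessary. Since $y \mapsto D^y$ is not continuous, it would not obviously deliver the inclusion anyway.
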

\begin{proof} Note that $\alpha_H(L_{\underline{H}}(x)) \in \underline{H}$ by definitions and that $\alpha_H(L_{\underline{H}}(x))$ is forward asymptotic to $\beta_H(L_{\underline{H}}(x))$ by Lemma \ref{08-10-25bxz}. Since $x = \beta_H(L_{\underline H}(x))$ by Corollary \ref{20-01-26h} and $C$ is a source component in $\underline{H}$ it follows that $\alpha_H(L_{\underline{H}}(x)) \in X_C$. In particular, $\# s_{\underline{H}}(x_j) = \# s_{\underline{H}}(\alpha_H(L_{\underline{H}}(x))_j) = M(C)$ for all $j$. It follows first that $s_{\underline{H}}(x_j) =  s_{\underline{H}}(\alpha_H(L_{\underline{H}}(x))_j)$ for all $j$, and then that $x= \alpha_H(L_{\underline{H}}(x))$. 
\end{proof}

\begin{lemma}\label{23-04-26d} Let $C$ be a component in $\overline{H}$. Then the following are equivalent:
\begin{itemize}
\item[1)] There is an element $y \in Y$ such that $\alpha_H(y)$ is forward asymptotic to $C$.
\item[2)] There is an element $y \in Y$ such that $\beta_H(y)$ is forward asymptotic to $C$.
\item[3)] $C \subseteq \underline{H}$.
\end{itemize}
\end{lemma}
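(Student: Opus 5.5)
The plan is to prove the cycle $1) \Leftrightarrow 2)$, $1) \Rightarrow 3)$, $3) \Rightarrow 2)$. Throughout, $(H,L_H)$ is the right-resolving presentation, and "forward asymptotic to $C$" means that the $j$'th coordinate lies in $E_C$ for all sufficiently large $j$.

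\emph{$1) \Leftrightarrow 2)$.} By Lemma \ref{08-10-25bxz}, $\alpha_H(y)$ and $\beta_H(y)$ are forward asymptotic for every $y$: they agree in all coordinates $j \geq N$ for some $N$. So one of them eventually runs in $E_C$ exactly when the other does, and the same $y$ works for both conditions.

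\emph{$1) \Rightarrow 3)$.} Assume $\alpha_H(y)_j \in E_C$ for all $j \geq N$. Since $\alpha_H(y) \in X_{\underline{H}}$, every vertex $s_{\overline{H}}(\alpha_H(y)_j) = D^{\sigma^j(y)}$ lies in $V_{\underline{H}}$.
\begin{itemize}
\item \emph{All vertices of $C$ lie in $V_{\underline{H}}$.} $C$ is strongly connected, so every vertex of $C$ is reached from $s_{\overline{H}}(\alpha_H(y)_N)$ by a path inside $C$. The vertex set $V_{\underline{H}}$ is hereditary in $\overline{H}$, so it contains every such vertex.
\item \emph{All edges of $C$ lie in $E_{\underline{H}}$.} By definition, $E_{\underline{H}}$ consists of all edges of $\overline{H}$ whose start vertex is in $V_{\underline{H}}$. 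Hence every edge of $C$ belongs to $\underline{H}$.
\end{itemize}
Together these give $C \subseteq \underline{H}$.

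\emph{$3) \Rightarrow 2)$.} Assume $C \subseteq \underline{H}$. Then $X_C \subseteq X_{\underline{H}}$, and $X_C \neq \emptyset$ because $C$ is a component. Pick any $x \in X_C$, for instance a periodic ray in $C$. Lemma \ref{03-07-25axz}, applied to the right-resolving presentation $(H,L_H)$, gives an element $y$ such that $\beta_H(y)$ is forward asymptotic to $x$. Since $x_j \in E_C$ for all $j$, $\beta_H(y)_j \in E_C$ for all large $j$, which is 2).

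The only point needing care is that Lemma \ref{03-07-25axz} requires its input to be an element of $X_{\underline{H}}$; this is exactly what the inclusion $C \subseteq \underline{H}$ supplies. The underlying work, producing a periodic point whose $\alpha_H$-lift connects to an arbitrary vertex of $\underline{H}$, is already contained in Lemma \ref{10-02-25dxxz}. I therefore expect no real obstacle beyond correctly invoking the hereditary property of $V_{\underline{H}}$ in the step $1) \Rightarrow 3)$.
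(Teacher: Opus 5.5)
Your proof is correct. The step $1) \Leftrightarrow 2)$ is the same as in the paper. For $1) \Rightarrow 3)$ you spell out what the paper calls clear, using the heredity of $V_{\underline{H}}$ in $\overline{H}$ and the fact that $E_{\underline{H}}$ is determined by start vertices. Heredity is genuinely needed there, since a ray that is eventually in $C$ need not visit every vertex of $C$.

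The real difference is in the converse.

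\emph{The paper's route.} It proves $3) \Rightarrow 1)$ by tracing $C$ back to a source component $C'$ of $\underline{H}$. It then uses Lemma \ref{23-04-26c} (rays in a source component are their own $\alpha_H$-lifts) to find $y$ such that $t_{\underline{H}}(\alpha_H(y)_{(-\infty,-1]})$ is the start of a path $z$ from $C'$ into $C$. Finally it applies Lemma \ref{11-11-24axz} to the spliced ray $\alpha_H(y)_{(-\infty,-1]}z$.

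\emph{Your route.} You feed a ray $x \in X_C \subseteq X_{\underline{H}}$ into Lemma \ref{03-07-25axz} and land in $2)$. That lemma carries out the same splicing argument followed by Lemma \ref{11-11-24axz}. The difference is that it anchors the left half at a periodic point supplied by Lemma \ref{10-02-25dxxz} rather than at a source component.

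\emph{Comparison.} Your route is the more economical one. It only invokes facts already established for an arbitrary right-resolving presentation. It also makes Lemma \ref{23-04-26c}, which the paper uses solely for this step, unnecessary. The paper's route reaches $1)$ directly rather than $2)$, but this gains nothing, since Lemma \ref{08-10-25bxz} is needed for $1) \Leftrightarrow 2)$ in either case.
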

\begin{proof} 1) $\Leftrightarrow$ 2) by Lemma \ref{08-10-25bxz}. 1) $\Rightarrow$ 3) is clear because $\alpha_H(Y) \subseteq X_{\underline{H}}$. 3) $\Rightarrow$ 1): There is a path in $\underline{H}$ to $C$ from a source component $C'$ in $\underline{H}$ and therefore also a path $z \in X_{\underline{H}}[0,\infty)$ which is forward asymptotic to $C$ and has $s_{\underline{H}}(z) \in C'$. It follows from Lemma \ref{23-04-26c} that there is a $y \in Y$ such that $t_{\underline{H}}(\alpha_H(y)_{(-\infty,-1]}) = s_{\underline{H}}(z)$. Then $\alpha_H(y)_{(-\infty,-1]}z \in X_{\underline{H}}$ is forward asymptotic to $C$. Set $z' := L_{\underline{H}}(\alpha_H(y)_{(-\infty,-1]}z )$. Since
$$
\alpha_H(L_{\underline{H}}(\alpha_H(y))) =\alpha_H(y)
$$
it follows from Lemma \ref{11-11-24axz} that  
$$
\alpha_H(z') =\alpha_H(y)_{(-\infty,-1]}z.
$$ 
In particular, $\alpha_H(z')$ is forward asymptotic to $C$ since $z$ is. Thus 1) holds.
\end{proof}

\begin{lemma}\label{03-05-06bx} In the setting of Theorem \ref{18-01-26d}, let $C \subseteq \underline{G}$ and $C'\subseteq \underline{H}$ be components such that $\underline{\phi}(X_C) = X_{C'}$. Then $M(C') = M(C)$.
\end{lemma}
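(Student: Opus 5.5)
Since $\underline{\phi}(X_C) = X_{C'}$, I will compute $M(C)$ and $M(C')$ through the quantity $\lim_{j\to\infty}\# L_G^{-1}(y)_{[j,\infty)}$, because Lemma \ref{03-05-26a} shows it is invariant under $\psi$. The goal is to find one $y\in Y$ with two properties:
\begin{itemize}
\item[(i)] $\alpha_G(y)$ is forward asymptotic to $C$, and $\alpha_H(\psi(y))$ is forward asymptotic to $C'$;
\item[(ii)] $\#s_{\underline{G}}(\alpha_G(y)_j)=\lim_{k\to\infty}\#L_G^{-1}(y)_{[k,\infty)}$ for all large $j$, and the analogous identity holds for $H$ and $\psi(y)$.
\end{itemize}
Since $\#s_{\underline{G}}(\alpha_G(y)_j)=M(C)$ for all large $j$ when $\alpha_G(y)$ is eventually in $C$, property (ii) together with Lemma \ref{03-05-26a} will give $M(C)=M(C')$.

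For (i), Lemma \ref{23-04-26d} (applied to $G$, since $C\subseteq\underline{G}$) gives a $y\in Y$ with $\alpha_G(y)$ forward asymptotic to $C$. By \eqref{26-04-26a}, that is $\underline{\phi}\circ\alpha_G=\alpha_H\circ\psi$ from the proof of Theorem \ref{18-01-26d}, we have $\alpha_H(\psi(y))=\underline{\phi}(\alpha_G(y))$. Because $\underline{\phi}$ is a sliding block code mapping $X_C$ onto $X_{C'}$, it maps an element that is eventually a path in $C$ to one that is eventually a path in $C'$. I would justify this by using that $\underline{\phi}$ maps $\overline{\Per}$ of $X_{\underline{G}}$ onto that of $X_{\underline{H}}$ componentwise. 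Alternatively, I would use that the $\omega$-limit set of $\alpha_G(y)$ is a subshift of $X_C$, and its image lies in $X_{C'}$, which forces eventual containment in $C'$ because paths in a finite graph eventually stay in one component.

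For (ii), Lemma \ref{08-10-25bxz} says $\beta_G(y)$ and $\alpha_G(y)$ are forward asymptotic. So for large $j$,
$$
\#s_{\underline{G}}(\alpha_G(y)_j)=\#s_{G''}(\beta_G(y)_j)=\#\{t_G(x_{j-1}) : x\in L_G^{-1}(y)\}.
$$
Because $(G,L_G)$ is right-resolving, a right-infinite path $x_{[j,\infty)}$ with label $y_{[j,\infty)}$ is determined by its start vertex $s_G(x_j)=t_G(x_{j-1})$. Hence the last quantity equals $\#L_G^{-1}(y)_{[j,\infty)}$, which is eventually constant and equal to the limit. The same argument applies verbatim to $H$ and $\psi(y)$. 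Combining this with Lemma \ref{03-05-26a} yields $M(C)=\lim_j\#L_G^{-1}(y)_{[j,\infty)}=\lim_j\#L_H^{-1}(\psi(y))_{[j,\infty)}=M(C')$.

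I expect the main obstacle to be the forward-asymptotic claim in (i): checking rigorously that $\underline{\phi}$ sends an element eventually in $C$ to one eventually in $C'$. The remaining steps are bookkeeping with right-resolvingness.
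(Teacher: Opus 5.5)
Your proposal is correct and follows the paper's proof essentially step for step: choose $y$ via Lemma \ref{23-04-26d}, pass from $\alpha_G(y)$ to $\beta_G(y)$ by Lemma \ref{08-10-25bxz} and use right-resolvingness to identify $M(C)$ with $\lim_{j}\#L_G^{-1}(y)_{[j,\infty)}$, do the same for $\alpha_H(\psi(y))=\underline{\phi}(\alpha_G(y))$, and conclude with Lemma \ref{03-05-26a}. The paper simply asserts that $\underline{\phi}(\alpha_G(y))$ is forward asymptotic to $C'$, and your $\omega$-limit-set argument validly fills that in: the image of the nonempty $\omega$-limit set (in $X_C$) lies in $X_{C'}$, and it equals the $\omega$-limit set of the image, which lies in the component where the image eventually stays. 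Your first suggested justification via $\overline{\Per}$ is too vague to rely on, so use the $\omega$-limit argument.
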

\begin{proof} It follows from Lemma \ref{23-04-26d} that we can choose $y \in Y$ such that $\alpha_G(y)$ is forward asymptotic to $C$. By Lemma \ref{08-10-25bxz} $\alpha_G(y)$ is forward asymptotic to $\beta_G(y)$ which implies that $\lim_{j \to \infty} \# L_G^{-1}(y)_{[j,\infty)} = \lim_{j \to \infty} \# s_{\underline{G}}(\beta_G(y)_j) = \lim_{j \to \infty} \# s_{\underline{G}}(\alpha_G(y)_j) = M(C)$. Similarly, $\lim_{j \to \infty} \# L_H^{-1}(\psi(y)) = \lim_{j \to \infty} \# s_{\underline{H}}(\alpha_H(\psi(y))_j)$. Since $\alpha_H(\psi(y)) = \underline{\phi}(\alpha_G(y))$ is forward asymptotic to $C'$ we conclude that $\lim_{j \to \infty} \# L_H^{-1}(\psi(y)) = 
M(C')$ and then from Lemma \ref{03-05-26a} that $M(C)=M(C')$.
\end{proof}

When we specialize to the case where $G$ and $H$ are the future covers of $Y$ and $Z$, respectively, it follows from Lemma \ref{03-05-06bx} that the set of multiplicities of components in the extended future cover of a sofic subshift is a conjugacy invariant. It also leads to the following

\begin{lemma}\label{13-06-26} Let $\psi : Y \to Z$ be a conjugacy of sofic subshifts, and $\psi_{\K} : X_{\K(Y)} \to X_{\K(Z)}$ the conjugacy from Theorem \ref{21-01-26}. Then
$$
\psi_{\K}(X_{\eth \K(Y)})  = X_{\eth \K(Z)} .
$$
\end{lemma}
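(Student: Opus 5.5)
The plan is to apply the setting of Theorem \ref{18-01-26d} with $(G,L_G) := (\mathbb K(Y),L_{\mathbb K(Y)})$, $(H,L_H) := (\mathbb K(Z),L_{\mathbb K(Z)})$ and $\phi := \psi_{\mathbb K}$. Krieger's theorem makes the diagram \eqref{09-10-25axx} commute, and the uniqueness in Theorem \ref{18-01-26d} identifies $\psi_{\K}$ with $\underline{\phi}$. The future covers are right-resolving, regular and follower-separated, so $\eth \K(Y)$ and $\eth\K(Z)$ are defined. It then suffices to show $\underline{\phi}(X_{\eth \underline{G}}) \subseteq X_{\eth \underline{H}}$, because the same argument applied to $\underline{\phi}^{-1} = \underline{\phi^{-1}}$ gives the reverse inclusion.

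The first step is to characterize $X_{\eth\underline{G}}$ by the asymptotic behaviour of a ray. Removing sinks does not change the edge shift, so $X_{\eth\underline{G}}$ consists of exactly those $x \in X_{\underline{G}}$ with $\# s_{\underline{G}}(x_j) \geq 2$ for all $j$. Since $(G,L_G)$ is right-resolving we have $\#[D,a] \leq \# D$, so $j \mapsto \# s_{\underline{G}}(x_j)$ is non-increasing. The ray $x$ eventually stays in a single component $C$ of $\underline{G}$, because $\underline{G}$ is finite and a ray can leave a component only finitely often. Therefore
$$
x \in X_{\eth \underline{G}} \iff \lim_{j\to\infty} \# s_{\underline{G}}(x_j) \geq 2 \iff M(C) \geq 2,
$$
where $C$ is the component that $x$ eventually enters.

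The second step is to transport this condition with $\underline{\phi}$. A conjugacy of edge shifts maps the non-wandering set $\overline{\Per(X_{\underline{G}})} = \bigsqcup_C X_C$ onto $\overline{\Per(X_{\underline{H}})}$. Since it preserves transitivity and disjointness of the closed invariant pieces, it maps each $X_C$ onto some $X_{C'}$. If $x$ is eventually in $C$, then $\sigma^n(x)$ approaches $X_C$, so by continuity $\sigma^n(\underline{\phi}(x))$ approaches $X_{C'}$. For an edge shift, being within a small distance of $X_{C'}$ forces the edge at coordinate $0$ to lie in $C'$, so $\underline{\phi}(x)$ is eventually in $C'$. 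Lemma \ref{03-05-06bx} gives $M(C') = M(C) \geq 2$, and by the first step applied in $\underline{H}$ we get $\underline{\phi}(x) \in X_{\eth\underline{H}}$.

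The only delicate point is the component-matching statement, namely that $\underline{\phi}(X_C) = X_{C'}$ for a component $C'$ and that forward asymptoticity to $X_C$ transfers to forward asymptoticity to $X_{C'}$. This is exactly what is needed to invoke Lemma \ref{03-05-06bx}. I expect it to follow from standard facts about conjugacies preserving the decomposition of the non-wandering set into transitive components, together with uniform continuity of $\underline{\phi}$. The rest of the argument is bookkeeping.
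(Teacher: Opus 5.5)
Your proposal is correct and follows essentially the same route as the paper. A ray in $X_{\eth \K(Y)}$ is eventually in a component $C$ with $M(C)\geq 2$, and its image under $\psi_{\K}=\underline{\phi}$ is eventually in a component $C'$ with $\psi_{\K}(X_C)=X_{C'}$. Lemma \ref{03-05-06bx} then gives $M(C')=M(C)\geq 2$, and symmetry gives the reverse inclusion. The only difference is cosmetic: you obtain $C'$ from the decomposition of the non-wandering set and transfer asymptoticity by uniform continuity, whereas the paper starts from the component the image ray eventually enters and matches it to $C$ via a forward condensation point of the orbit.
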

\begin{proof} Let $y \in X_{\eth \K(Y)}$. Then $y$ is forward asymptotic in $X_{\K(Y)}$ to a component $C$ in $\K(Y)$ of multiplicity $M(C) \geq 2$. There is also a component $C'$ in $\K(Z)$ such that $\psi_{\K}(y)$ is forward asymptotic to $C'$. Then a forward condensation point of the orbit of $y$ in $X_{\K(Y)}$ is mapped into $X_{C'}$ which is enough to conclude that $\psi_{\K}(X_C) = X_{C'}$. It follows therefore from Lemma \ref{03-05-06bx} that $M(C') = M(C) \geq 2$. In particular, $\psi_{\K}(y) \in X_{\eth \K(Z)}$. This shows that $\psi_{\K}(X_{\eth \K(Y)}) \subseteq X_{\eth \K(Z)}$. The desired conclusion follows by symmetry.
\end{proof} 

The layers of desynchronization in a sofic subshift are conjugacy invariants: If $\psi : Y \to Z$ is a conjugacy of sofic subshifts, then
$$
\psi(\eth^n Y) = \eth^n Z
$$
for all $n$. This follows from an argument given for SFTs in Theorem 2.1.10 of \cite{LM} and repeated for general subshifts in Lemma 4.4 of \cite{Th0}. More in the spirit of this work, but much less direct, it follows also by combining Theorem \ref{11-03-26b}, Theorem \ref{21-01-26} and Lemma \ref{13-06-26}.

\begin{lemma}\label{11-06-26} In the setting of Theorem \ref{21-01-26}, let $(\psi|_{\eth Y})_\mathbb K : \mathbb K(\eth Y) \to \mathbb K(\eth Z)$ be the conjugacy from Krieger's theorem applied to $\psi|_{\eth Y} : \eth Y \to \eth Z$. Then 
$$
 (\psi|_{\eth Y})_\mathbb K \circ  f_{\eth \K(Y)}   =   f_{\eth \K(Z)} \circ \psi_{\K} 
$$
on $X_{\eth \K(Y)}$.
\end{lemma}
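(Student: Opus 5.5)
We follow the pattern used at the end of the proof of Theorem \ref{18-01-26d}: first prove the identity on a dense or asymptotically controlling set of points, then extend it using right-resolvingness of the target. Write $H := \mathbb K(Y)$, $H' := \mathbb K(Z)$, so that $\K(Y) = \underline{H}$ and $\K(Z) = \underline{H'}$. By Lemma \ref{26-07-26a}, the merged graph $[\eth\underline{H}]$ is identified with $\mathbb K(\eth Y)$, and likewise for $Z$, so both sides of the identity make sense.

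Both sides are sliding block codes $X_{\eth\K(Y)} \to X_{\mathbb K(\eth Z)}$, and they have the same label. Indeed,
$$
L_{\mathbb K(\eth Z)}\circ(\psi|_{\eth Y})_{\mathbb K}\circ f_{\eth\K(Y)} = \psi\circ L_{\mathbb K(\eth Y)}\circ f_{\eth \K(Y)} = \psi\circ L_{\K(Y)},
$$
while
$$
L_{\mathbb K(\eth Z)}\circ f_{\eth\K(Z)}\circ\psi_{\K} = L_{\K(Z)}\circ\psi_{\K} = \psi\circ L_{\K(Y)}.
$$
Here we use Krieger's theorem for $\psi|_{\eth Y}$, the fact that mergings preserve labels, Theorem \ref{21-01-26}, and Lemma \ref{13-06-26}, which says that $\psi_{\K}$ maps $X_{\eth\K(Y)}$ into $X_{\eth\K(Z)}$. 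Since $\mathbb K(\eth Z)$ is right-resolving, two elements with the same label that agree at one coordinate are forward equal. It therefore suffices to show that for each $x\in X_{\eth\K(Y)}$ the two images agree at coordinates tending to $-\infty$.

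The key step is to check the identity on points of the form $\alpha_H(y)$ with $y\in\eth Y$. By Lemma \ref{02-06-26} and the proof of Theorem \ref{11-03-26b}, $\alpha_H(y)\in X_{\eth\K(Y)}$. By Lemma \ref{26-07-26}, $s(f_{\eth\K(Y)}(\alpha_H(y))_j) = F_{\eth Y}(\sigma^j y)$, so $f_{\eth\K(Y)}\circ\alpha_H|_{\eth Y} = \alpha_{\eth Y}$ is Krieger's right-inverse for $\eth Y$. By \eqref{04-08-26}, $\psi_{\K}(\alpha_H(y)) = \alpha_{H'}(\psi(y))$, and applying the same lemma on the $Z$ side gives $f_{\eth\K(Z)}(\alpha_{H'}(\psi y)) = \alpha_{\eth Z}(\psi y)$. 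Krieger's theorem states $(\psi|_{\eth Y})_{\mathbb K}\circ\alpha_{\eth Y} = \alpha_{\eth Z}\circ\psi$, so the identity holds on $\alpha_H(\eth Y)$.

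The main obstacle is passing from $\alpha_H(\eth Y)$ to all of $X_{\eth\K(Y)}$. Here I would imitate Lemma \ref{10-02-25dxxz} and Lemma \ref{11-11-24axz} inside the graph $\eth\underline{H}$. Every $x\in X_{\eth\K(Y)}$ is backward asymptotic to some component $C$ of $\eth\underline{H}$. I would then show that $x$ can be approximated by points $x'$ that are backward equal to $\alpha_H(p)$ for a periodic $p$ with $\alpha_H(p)$ in a component of $\eth\underline{H}$ and the sizes $\#s$ stabilized. By Lemma \ref{11-11-24axz}, such $x'$ equal $\alpha_H(L(x'))$, and $L(x')\in\eth Y$ by Theorem \ref{11-03-26b}. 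Each such $x'$ is therefore of the form $\alpha_H(y)$ with $y\in\eth Y$, so the identity holds at $x'$. Since both sides are continuous, the identity then extends to $x$.

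If producing these approximants is awkward, an alternative route avoids them. Pick a point $w$ in a component $C$ of $\eth\underline{H}$ to which $x$ is backward asymptotic. Use Lemma \ref{23-04-26d}, restricted to $\eth\underline{H}$, to obtain $y\in\eth Y$ with $\alpha_H(y)$ forward asymptotic to a forward-transitive point of $C$, as in the proof of Lemma \ref{08-10-25az}. By continuity, the identity then holds on all of $X_C$. From there, right-resolvingness propagates it forward along $x$.
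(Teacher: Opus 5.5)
Your proposal is correct and is essentially the paper's proof. You check the identity on $\alpha_{\mathbb K(Y)}(\eth Y)$ using Lemma \ref{26-07-26}, \eqref{04-08-26} and Krieger's relation $(\psi|_{\eth Y})_{\mathbb K}\circ\alpha_{\eth Y}=\alpha_{\eth Z}\circ\psi$, and then extend by density of $\alpha_{\mathbb K(Y)}(\eth Y)$ in $X_{\eth\K(Y)}$ via Lemmas \ref{10-02-25dxxz} and \ref{11-11-24axz}, exactly as the paper does.

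One detail is worth stating explicitly: the approximant $x'$ lies in $X_{\eth\K(Y)}$ because $x'=\alpha_{\mathbb K(Y)}(L_{\K(Y)}(x'))$ forces $\# s_{\K(Y)}(x'_j)$ to be non-increasing in $j$, while $x'_j=x_j$ for large $j$. Your alternative route through forward-transitive points of components of multiplicity at least $2$ also works, but it is not needed.
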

\begin{proof} We shall need the following observations:
\begin{itemize}
\item[a)] $f_{\eth \K(Y)} \circ \alpha_{\mathbb K(Y)} = \alpha_{\eth Y}$ on $\eth Y$, and
\item[b)] $\alpha_{\mathbb K(Y)}(\eth Y)$ is dense in $X_{\eth \K(Y)}$.
\end{itemize}

We prove first a): 
Let $y \in \eth Y$. Then $\alpha_{\mathbb K(Y)}(y) \in X_{\eth \K(Y)}$ by Lemma \ref{02-06-26} and 
\begin{align*}
& s_{\mathbb K(\eth Y)}(f_{\eth \K(Y)} \circ \alpha_{\mathbb K(Y)}(y)_j ) = f_{\eth \K(Y)}(s_{\eth \K(Y)}(\alpha_{\mathbb K(Y)}(y)_j)) \\
&= F_{\eth Y}(\sigma^j(y)) = s_{\mathbb K(\eth Y)}(\alpha_{\eth Y}(y)_j)
\end{align*}
for all $j \in \mathbb Z$, where the penultimate equality follows from Lemma \ref{26-07-26}. Since $L_{\mathbb K(\eth Y)}(f_{\eth \K(Y)} \circ \alpha_{\mathbb K(Y)}(y)) = L_{\eth \K(Y)}(\alpha_{\mathbb K(Y)}(y)) =  L_{ \K(Y}(\alpha_{\mathbb K(Y)}(y)) = y = L_{\mathbb K(\eth Y)}(\alpha_{\eth Y}(y))$, we see that a) holds.

And then b):  Let $x \in X_{\eth \K(Y)}$. Let $M \in \mathbb Z$. It follows from Lemma \ref{10-02-25dxxz} that there is a periodic point $p \in \Per(Y)$ and an element $x'\in X_{\K(Y)}$ such that $x'_j = x_j$ for $j \geq M$ and $x'$ is backward asymptotic to $\alpha_{\mathbb K(Y)}(p)$. Since $\alpha_{\mathbb K(Y)}(L_{\K(Y)}(\alpha_{\mathbb K(Y)}(p)))  = \alpha_{\mathbb K(Y)}(p)$ it follows from Lemma \ref{11-11-24axz} that $x' = \alpha_{\mathbb K(Y)}(L_{\K(Y)}(x'))$. Note that $x' \in X_{\eth \K(Y)}$ since $x'$ is forward asymptotic to $x \in X_{\eth \K(Y)}$. Hence $L_{\K(Y)}(x') \in \eth Y$ by Theorem \ref{11-03-26b} and b) follows.

Thanks to a) and b) the rest of the proof is a direct verification: Let $y \in \eth Y$. Using a), the commutative diagram (7) from Krieger's theorem, Theorem 2.11 in \cite{Th1}, and \eqref{04-08-26} from Theorem \ref{21-01-26} we find that
\begin{align*}
& (\psi|_{\eth Y})_\mathbb K \circ  f_{\eth \K(Y)} \circ \alpha_{\mathbb K(Y)}(y) = (\psi|_{\eth Y})_\mathbb K\circ \alpha_{\eth Y}(y) = \alpha_{\eth Z} \circ \psi(y)\\
& = f_{\eth \K(Z)} \circ \alpha_{\mathbb K(Z)} \circ \psi (y) =  f_{\eth \K(Z)} \circ \psi_{\K} \circ \alpha_{\mathbb K(Y)}(y).  
\end{align*}
Thanks to b) this completes the proof.
\end{proof}

The last two lemmas and Theorem \ref{21-01-26} can now be combined to give the following theorem, which shows that the realization of the future cover of the desynchronization $\eth Y$ as the merged graph of $\eth \K(Y)$ is a canonical construction.

\begin{thm}\label{18-07-26g} Let $\psi : Y \to Z$ be a conjugacy of sofic subshifts. Then the diagram
\begin{equation*}
\xymatrix{ X_{\eth \underline{\mathbb K}(Y)} \ar[dd]_-{f_{\eth \underline{\mathbb K}(Y)}} \ar[rd]_-{\psi_{\underline{\mathbb K}}}  \ar@{^{(}->}[rr]   && X_{\underline{\mathbb K}(Y)} \ar'[d][dd]^-{f_{\underline{\mathbb K}(Y)}}   \ar[rd]_-{\psi_{\underline{\mathbb K}}} \\
&  X_{\eth \underline{\mathbb K}(Z)}  \ar[dd]^-{f_{\eth \underline{\mathbb K}(Z)}} \ar@{^{(}->}[rr]  && X_{\underline{\mathbb K}(Z)} \ar[dd]^-{f_{\underline{\mathbb K}(Z)}} \\
X_{\mathbb K(\eth Y)}  \ar[dd]_-{L_{\mathbb K(\eth Y)}} \ar[rd]_-{({\psi|_{\eth Y}})_{\mathbb K}} && X_{\mathbb K(Y)} \ar[dd]^-{L_{\mathbb K(Y)}}   \ar[rd]_-{\psi_\mathbb K}  \\
& X_{\mathbb K(\eth Z)} \ar[dd]^(.7){L_{\mathbb K(\eth Z)}} && X_{\mathbb K(Z)} \ar[dd]^-{L_{\mathbb K(Z)}} \\
\eth Y \ar[rd]_-{\psi|_{\eth Y}} \ar@{^{(}->}'[r][rr] & & Y \ar[rd]_-\psi\\
&  \eth Z \ar@{^{(}->}[rr]  & & Z}
\end{equation*}
commutes.
\end{thm}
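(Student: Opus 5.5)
The diagram is a cube, and the plan is to check it face by face. Every face is either already established earlier in the paper or follows by restricting an established square to the subshifts $X_{\eth \K(Y)}\subseteq X_{\K(Y)}$ and $\eth Y\subseteq Y$.

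\emph{Right face.} This is the square \eqref{20-01-25dx} of Theorem \ref{21-01-26}, so nothing needs to be proved.

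\emph{Top face.} Commutativity means that $\psi_{\K}$ maps $X_{\eth \K(Y)}$ into $X_{\eth \K(Z)}$ and that the slanted arrow $X_{\eth \K(Y)}\to X_{\eth \K(Z)}$ is the restriction of $\psi_{\K}$. The first point is Lemma \ref{13-06-26}, and the second is how the arrow is defined.

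\emph{Bottom face.} This is the inclusion square for $\psi|_{\eth Y}$. It is well-defined because $\psi(\eth Y)=\eth Z$, which is the conjugacy invariance of the desynchronization noted before Lemma \ref{11-06-26}.

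\emph{Left face.} This face splits into two squares.
\begin{itemize}
\item[(i)] The upper square is
$$
(\psi|_{\eth Y})_{\mathbb K}\circ f_{\eth \K(Y)} = f_{\eth \K(Z)}\circ\psi_{\K} \ \text{ on } X_{\eth\K(Y)},
$$
which is exactly Lemma \ref{11-06-26}. Here $f_{\eth \K(Y)}$ lands in $X_{\mathbb K(\eth Y)}$ via the identification of Lemma \ref{26-07-26a}(b).
\item[(ii)] The lower square is Krieger's theorem applied to the conjugacy $\psi|_{\eth Y}:\eth Y\to\eth Z$. That gives $L_{\mathbb K(\eth Z)}\circ(\psi|_{\eth Y})_{\mathbb K}=\psi|_{\eth Y}\circ L_{\mathbb K(\eth Y)}$.
\end{itemize}

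\emph{Back and front faces.} These are the faces joining the $\eth$-column to the full column, i.e. the compatibility of the vertical maps with the inclusions. The composite down the $\eth$-column must agree with the composite across and down the full column:
$$
\iota\circ L_{\mathbb K(\eth Y)}\circ f_{\eth\K(Y)} = L_{\mathbb K(Y)}\circ f_{\K(Y)}\circ \iota .
$$
Both sides equal the labeling $L_{\K(Y)}$ restricted to $X_{\eth\K(Y)}$, because merging preserves labels. There is no direct horizontal arrow at the middle level, so these faces only assert this equality of composites.

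The only point needing care is that the identification $[\eth\K(Y)]\cong\mathbb K(\eth Y)$ from Lemma \ref{26-07-26a} is the one used in Lemma \ref{11-06-26}. That lemma relied on Lemma \ref{26-07-26}, which identifies vertices via $f_{\eth \K(Y)}(v)=F_{\eth Y}(y)$, so the identifications are consistent. Once this is checked, all faces commute, and the whole diagram commutes because every path from a top vertex to a bottom vertex factors through faces already shown to commute.
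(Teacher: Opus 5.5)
Your proposal is correct and follows the paper's argument. The paper proves the theorem by combining Lemma \ref{13-06-26}, Lemma \ref{11-06-26} and Theorem \ref{21-01-26}, together with Krieger's theorem for $\psi|_{\eth Y}$ and the conjugacy invariance $\psi(\eth Y)=\eth Z$; your face-by-face check, including the label-preservation argument for the back and front faces, simply makes that combination explicit.
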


\end{document}